\newcommand\ie{\textit{i.e. }}
\newcommand\la{\lambda}
\newcommand\al{\alpha}
\newcommand\fg{\mathfrak g}
\newcommand\bC{\mathbb C}
\newcommand\bN{\mathbb N}
\newcommand\vh{\vspace{0.1in}}
\newcommand\disp{\displaystyle}
\newcommand\sform[2]{\langle #1,#2\rangle}
\newcommand\cf{\textit{cf }}
\newcommand{\bR}{\mathbb{R}}
\newcommand{\bZ}{\mathbb{Z}}
\newcommand\C[1]{\mathcal{#1}}
\newcommand\CL{\C L}
\newcommand\CO{\C O}
\newcommand\cCO{\C O^\vee}
\newcommand\CP{\mathcal P}
\newcommand\CS{\mathcal S}
\newcommand\CX{\C X}
\newcommand\CZ{\mathcal Z}
\newcommand\gL{\C L\!\!\!\C L}
\newcommand\bb[1]{\mathbb{#1}}
\newcommand\ovl[1]{\overline{#1}}
\newcommand\ovb[1]{\overline{#1}}
\newcommand\ovr[1]{\overline{#1}}
\newcommand\fk[1]{\mathfrak{#1}}
\newcommand\fm{\fk m}
\newcommand\fn{\fk n}
\newcommand\fp{\fk p}
\newcommand\ep{\epsilon}
\newcommand\om{\omega}
\newcommand\Om{\Omega}
\newcommand\wh[1]{\widehat{#1}}
\newcommand\wht[1]{\widehat{#1}}
\newcommand\wti[1]{\widetilde{#1}}
\newcommand\unb[2]{\underbrace{#2}_{#1}}
\newcommand\seq[2]{{#1_1,\dots,#1_#2}}
\newcommand\seqzero{{0,\dots,0}}
\newcommand\obr[2]{\overset{#1}{\overbrace{#2}}}
\newcommand\clrr{\color{red}}
\newcommand\clrblu{\color{blue}}
\newcommand\Triv{Triv}
\newcommand{\bc}{\begin{center}}
\newcommand{\ec}{\end{center}}
\newcommand\tr{\operatorname{Tr}}
\newcommand\ad{\operatorname{ad}}
\newcommand\Ad{\operatorname{Ad}}
\newcommand\Ann{\operatorname{Ann}}
\newcommand\Ind{\operatorname{Ind}}
\newcommand\Hom{\operatorname{Hom}}
\newtheorem{theorem}{Theorem}[subsection]
\newtheorem*{theorem*}{Theorem}
\newtheorem{corollary}[theorem]{Corollary}
\newtheorem{conjecture}[theorem]{Conjecture}
\newtheorem{definition}[theorem]{Definition}
\newtheorem{example}[theorem]{Example}
\newtheorem{lemma}[theorem]{Lemma}
\newtheorem{proposition}[theorem]{Proposition}
\newtheorem{remark}[theorem]{Remark}
\newcommand\be[1]{\begin{#1}}
\newcommand\ee[1]{\end{#1}}
\begin{document}



\dedicatory{\flushright\normalsize To Roger Howe with admiration}
\title{Unipotent Representations and the Dual
        Pair Correspondence} 

\author{Dan Barbasch}
\address{
Department of Mathematics\\
Cornell University
}
\email{barbasch@math.cornell.edu}
\maketitle
  \section{Introduction}
This paper describes some properties of the unipotent representations
in relation to the $\Theta-$correspondence and rational functions on
coadjoint nilpotent orbits in the Lie algebra.

\bigskip
Let $\fk g_\bC$ be the complexification of a real reductive Lie algebra
$\fk g$, and $G$ a real reductive group with Lie algebra $\fk g$ and 
maximal compact subgroup $K\subset G.$ 
\begin{definition}
An irreducible $(\fk g,K)-$module $(\Pi,V)$ for a real reductive group
$G$ is called \textbf{unipotent} if
\begin{description}\label{d:unipotent}
\item[(1)] $\Ann\Pi\subset U(\fk g)$ is a maximal primitive ideal, 
\item[(2)] $(\Pi,V)$ is unitary.
\end{description}
\end{definition}  
Let $(G_1,G_2)$ be pair of groups which form a dual reductive pair,
and $\Pi_1$ a unipotent representation of $G_1.$ 
The question is when $\Pi_1$ occurs in the 
\textit{$\Theta-$correspondence},  
as introduced and studied in the work of Roger Howe.
{
The paper treats the case of complex groups viewed as real
groups; $\fk g$ is the Lie algebra of a complex group viewed as a real
group. A lot of the material is available for real groups, still in
progress.} The main reason for this restriction is that unipotent
representations are classified in the case of complex groups in the sense
that their Langlands parameters
are explicitly given in \cite{B1}, and the Theta correspondence
is also explicitly described in  \cite{AB1}.  We mainly treat the
cases of $Sp(2n,\bb C)\times O(m,\bb C)$;  $GL(n,\bb
C)\times Gl(n,\bb C)$ is straightforward. The nature of the  answer is that
for any unipotent representation $\Pi$, there is a sequence 
$$
G_0=G,\ G_1,\dots ,G_r,
$$ 
such that each $(G_i,G_{i+1})$ is a dual pair, 
and unipotent representations $\Pi_i$ so that $(\Pi_i,\Pi_{i+1})$ occur in the
$\Theta-$correspondence, and the last one $\Pi_r$ is 1-dimensional. The precise
conditions on $\Pi_i$ and the $G_i$ are given in Section
\ref{sec:metaplectic}, Theorem \ref{t:corresp}.  

\bigskip
The second theme is the relation to regular functions on coadjoint
orbits. To each unipotent representation one can associate a nilpotent
orbit $\C O\subset \fk g$ and a number $m(\Pi,\CO)$ called the
\textit{Asymptotic Support} and \textit{Multiplicity}
respectively. We use the (equivalent) versions of \textit{Associated
  Cycle} and \textit{Multiplicity} in \cite{V}. 
Let $Unip(\CO)$ be the set of unipotent representations
with asymptotic support $\CO.$ Let $e\in \CO$ be a representative,
$C_G(\CO):=C_G(e)$ be the centralizer, and $A(\CO):=C_G(e)/C_G(e)^0$
be the component group. One of the main results in \cite{V} is that
there is an (algebraic) representation $\psi(\Pi,\CO)$ of $C_G(\CO)$ 
such that the multiplicity of $\Pi$ is $\dim\psi,$ and 
$$
\Pi\mid_{K_\bb C}=R(\CO,\psi)-Y_\psi
$$
where $K$ is the  maximal compact subgroup
$K\subset G,$ so $K_\bb C$ is equivalent to $G,$ 
$R(\CO,\psi)$ the space of regular sections on $\CO$ transforming
according to $\psi$ viewed as  $G-$module, and $Y_\psi$ is a
$K_\bb C-$representation with support in nilpotent orbits in the closure of
$\CO,$ strictly smaller than $\CO$.  
As already mentioned in \cite{V}, it is
conjectured that there is a $1-1$ correspondence
$\psi\longleftrightarrow \Pi_\psi$ between $\wh{A(\CO)}$ and 
$Unip(\CO)$ such that
$$
\Pi_\psi\mid_K\cong R(\CO,\psi),
$$
in particular $Y_\psi=0.$ 
We establish this conjecture for a large class of nilpotent
orbits in the classical Lie groups. The relation follows for more
general orbits from certain geometric properties of the
resolution of nilpotent orbits for classical groups in \cite{KP1}. We
will pursue this in a later paper.

\bigskip
The correspondence between orbits and unipotent representations is
conjectured to hold for general groups. The last sections 
investigate its validity for the simply connected groups $Spin(n,\bb
C),$ and the case of $F_4.$ The groups of type $E$ will be considered
in a different paper.  

\bigskip
{Different properties of unipotent representations are considered in 
\cite{Moe} and \cite{MR}. There is very little overlap with the
results in this paper.}

\bigskip
{
One of the aims  of the paper is to highlight the impact that
Roger Howe's work had on my own work. 
I first met Roger Howe at a conference  in Luminy in 1978. At the time
I knew the work of Rallis and Schiffman and Kashiwara-Vergne on the
dual pairs correspondence when one of the groups was compact. The case
when neither group was compact seemed completely unreachable. I was
stunned by the results that Roger presented for this latter case. 
Some ten years later, I understood enough to write a paper joint with
J. Adams, \cite{AB1}, where we described the
correspondence for complex groups in detail. Extensions of these
results to some real classical groups appear in \cite{AB2}. 
}
{
One of my students, Shu-Yen Pan,
investigated the correspondence in the case of p-adic
groups, and another student, Daniel Wong, investigated an extension of the
Theta correpondence.

Along different lines, at the same time that I started my
collaboration with Adams, I met and started to collaborate
with Allen Moy.  Another ten years later we
gave a new proof of the Howe conjecture for p-adic groups. 

\subsection*{Acknowledgments} The author was supported by an NSA grant.

}

\section{Unipotent Representations}

In this section we review the basics of the representation theory of
admisisble representations of complex groups viewed as real groups.

\subsection{Complex Groups Viewed as Real Groups}
This material is taken from \cite{V1}. Modules are all admissible
$(\fk g_\bC,K)-$modules. 
\begin{lemma}
 Let $\fk g$ be a complex Lie algebra, and let $\fk g_0$ be the same
 algebra  viewed as a real Lie  algebra. Then the complexification
 $\fk g_\bC$ canonically identifies with
$$
\fk g_\bC=\fk g_L+ \fk g_R.
$$  
The summand $\fk g_L$ is isomorphic to $\fk g,$ and $\fk g_R$  to
the complex conjugate algebra. The $\star$-antiautomorphism on $\fk
g_\bC$ interchanges the two summands.
\end{lemma}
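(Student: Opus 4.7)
The plan is to realize the decomposition $\fk g_\bC = \fk g_L\oplus \fk g_R$ via the two commuting complex structures that $\fk g_\bC$ acquires. Write $J\colon \fk g_0 \to \fk g_0$ for the original complex structure on $\fk g$, so $J^2=-\operatorname{id}$ and $J$ is a Lie algebra automorphism (since multiplication by $i$ commutes with the bracket of $\fk g$). Extending $J$ $\bC$-linearly to $\fk g_\bC = \fk g_0\otimes_\bR \bC$ gives an involutive automorphism whose eigenvalues are $\pm i$. Define
$$
\fk g_L := \ker(J - i\cdot \operatorname{id}),\qquad \fk g_R := \ker(J + i\cdot \operatorname{id}),
$$
with projectors $P_L(X) = \tfrac{1}{2}(X\otimes 1 - JX\otimes i)$ and $P_R(X)=\tfrac{1}{2}(X\otimes 1 + JX\otimes i)$ for $X\in\fk g_0$. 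Linear algebra over $\bC$ gives $\fk g_\bC = \fk g_L\oplus \fk g_R$, and the decomposition is canonical because $J$ is intrinsic to $\fk g$.

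Next I would verify that $\fk g_L$ and $\fk g_R$ are Lie subalgebras that commute. Since $J$ is an automorphism, $J[X,Y] = [JX,Y] = [X,JY]$; hence for $X,Y\in \fk g_L$, $J[X,Y] = [JX,Y] = [iX,Y] = i[X,Y]$, so $\fk g_L$ is closed under the bracket, and similarly for $\fk g_R$. For mixed brackets, if $X\in\fk g_L$ and $Y\in\fk g_R$ then $J[X,Y] = [iX,Y] = i[X,Y]$ while also $J[X,Y] = [X,-iY] = -i[X,Y]$, forcing $[X,Y]=0$. The map $\fk g\to \fk g_L$ sending $X\mapsto P_L(X)$ is $\bC$-linear (it intertwines the action of $J$ on $\fk g$ with multiplication by $i$ on $\fk g_L$) and a Lie algebra homomorphism, and it is bijective by dimension count, giving $\fk g_L\cong \fk g$. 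The analogous map $X\mapsto P_R(X)$ intertwines $J$ with multiplication by $-i$, so $\fk g_R$ is isomorphic to the complex conjugate algebra $\overline{\fk g}$.

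For the last assertion I would use that the $\star$-antiautomorphism is built from complex conjugation on $\fk g_\bC = \fk g_0\otimes_\bR \bC$ with respect to the real form $\fk g_0$ (composed with $X\mapsto -X$ to turn it into an antiautomorphism of the complex Lie algebra). This conjugation fixes $\fk g_0\otimes 1$ pointwise and sends $1\otimes i$ to $-1\otimes i$; applying it to $P_L(X) = \tfrac{1}{2}(X\otimes 1 - JX\otimes i)$ yields $\tfrac{1}{2}(X\otimes 1 + JX\otimes i) = P_R(X)$, and similarly $P_R(X)\mapsto P_L(X)$. Thus $\star$ interchanges $\fk g_L$ and $\fk g_R$, as claimed.

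The main obstacle is bookkeeping rather than depth: one must keep the two complex structures on $\fk g_\bC$ (the internal $J$ versus the scalar action of $\bC$ in the tensor product) clearly distinguished, and verify that the identification of $\fk g_R$ with the \emph{conjugate} algebra $\overline{\fk g}$ is consistent with the convention used for the $\star$-operation in \cite{V1}. Once the two complex structures are sorted out, everything reduces to direct verification on the projectors $P_L$, $P_R$.
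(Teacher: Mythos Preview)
Your approach is essentially identical to the paper's: both define $\fk g_L,\fk g_R$ as the $\pm i$-eigenspaces of the $\bC$-linear extension of the original complex structure, and your projectors $P_L,P_R$ are exactly the paper's formulas $\tfrac12(X\mp iJX)$. One terminological slip worth fixing: $J$ is not a Lie algebra \emph{automorphism} (that would mean $[JX,JY]=J[X,Y]$, which fails since $J^2=-\mathrm{id}$); the identity you correctly use, $J[X,Y]=[JX,Y]=[X,JY]$, is the $\bC$-bilinearity of the bracket, and with that relabeling your argument is complete and in fact more detailed than the paper's (which does not explicitly verify the subalgebra, commutation, or $\star$ claims).
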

\begin{proof}
Let $j$ be the multiplication by $\sqrt{-1}$ on $\fk g.$ This is a
real linear transformation on $\fk g_0$ and so defines a complex
linear transformation $J$ on $\fk g_\bC= \fk g_0 +i\fk g_0,$
satisfying $J(X+iY)=JX+iJY$ for $X,Y\in\fk g_0.$ Then
\begin{equation*}
  \begin{aligned}
    &\fk g_L=\{\frac12(X-iJX)\mid X\in\fk g_\bC\},\\
    &\fk g_R=\{\frac12(X+iJX)\mid X\in\fk g_\bC\},\\
  \end{aligned}
\end{equation*}
are complex subalgebras of $\fk g_\bC.$   
The algebra $\fk g_L$ is isomorphic to $\fk g$ via 
$$
(\al+i\beta)-iJ(\al +i\beta)\mapsto \al +j\beta,\qquad \al,\beta\in\fk g_0.
$$
The algebra $\fk g_R$ is isomorphic to $\fk g$ with conjugate linear
multiplication.
\end{proof}

\subsection{Langlands Parameters} 
We use the standard realizations of the classical
groups, roots, positive roots and simple roots. Let 
\begin{itemize}
\item $\theta$ Cartan involution, $K$ the fixed points of $\theta$,
  $\fk g=\fk k +\fk p$ the Cartan decomposition, 
\item  $\fk b=\fk h +\fk n$ a Borel subalgebra, 
\item $\fk h=\fk t +\fk a$ a Cartan subalgebra, $\fk t\subset\fk k$, 
$\theta\mid_\fk a=-Id,$
\item $W$ the Weyl group of $(\fk g,\fk h),$
\item $X(\mu,\nu)=\Ind_B^G(\bC_\mu\otimes\bC_\nu)$  standard module, 
\item $L(\mu,\nu)$, the unique subquotient containing
  $V_\mu\in\widehat K,$
\item $\la_L=(\mu+\nu)/2$ and $\la_R=(-\mu+\nu)/2.$ 
\end{itemize}
 
{The parameters of unipotent representations have real $\nu$, so we
  will assume this in the rest of the paper.}
\begin{theorem}\ 

  \begin{enumerate}
  \item $L(\la_L,\la_R)\cong L(\la_L',\la_R')$ if and only if there is
    a $w\in W$ such that $w\cdot (\la_L,\la_R)=(\la'_L,\la'_R).$
\item $L(\la_L,\la_R)$ is hermitian if and only if there is $w\in W$
  such that \newline $w\cdot(\mu,\nu)=(\mu,-\nu).$  
  \end{enumerate}
\end{theorem}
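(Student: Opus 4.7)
The plan is to derive both parts from the Langlands classification for $(\fk g_\bC,K)$-modules, combined with the explicit decomposition $\fk g_\bC=\fk g_L+\fk g_R$ and the form of the $\star$-antiautomorphism recorded in the preceding lemma.

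For part (1), the $(\Leftarrow)$ direction is handled by the standard family of intertwining operators $A_w : X(\mu,\nu) \to X(w\mu,w\nu)$, $w\in W$, defined by integration over the Bruhat cell $N\cap w^{-1}\overline N w$ and continued meromorphically in the parameters. Each $A_w$ restricts to an isomorphism on the lowest $K$-type $V_\mu$, which occurs with multiplicity one in $X(\mu,\nu)$ and generates the subquotient $L(\mu,\nu)$; hence $A_w$ descends to $L(\mu,\nu)\cong L(w\mu,w\nu)$. For the $(\Rightarrow)$ direction, an isomorphism $L(\mu,\nu)\cong L(\mu',\nu')$ forces both (i) a common infinitesimal character, which via Harish-Chandra's theorem applied to $Z(U(\fk g_L))\otimes Z(U(\fk g_R))$ places $(\la_L,\la_R)$ and $(\la'_L,\la'_R)$ in a single $W\times W$-orbit, and (ii) a common lowest $K$-type, whose highest weight equals $\mu$ up to $W$. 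Combining (i) and (ii) collapses the a priori $W\times W$-ambiguity to a single diagonal $W$-orbit, as required.

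For part (2), the hermitian dual $L^h$ is computed by combining contragredient with the $\star$-antiautomorphism. The preceding lemma shows $\star$ interchanges the summands $\fk g_L$ and $\fk g_R$, and a direct calculation on infinitesimal characters shows it acts by $(\la_L,\la_R)\mapsto(-\overline{\la_R},-\overline{\la_L})$. Substituting $\mu=\la_L-\la_R$ and $\nu=\la_L+\la_R$, and using the standing assumptions that $\mu$ is integral and $\nu$ is real, this simplifies to $(\mu,\nu)\mapsto(\mu,-\nu)$. Therefore $L(\mu,\nu)$ is hermitian iff $L(\mu,\nu)\cong L(\mu,-\nu)$, and an application of part (1) yields the stated condition.

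The main obstacle will be step (ii) of the $(\Rightarrow)$ direction in part (1): refining a $W\times W$-orbit coincidence of infinitesimal characters to a diagonal $W$-orbit coincidence of full parameters. Concretely, from $(\la'_L,\la'_R)=(w_1\la_L,w_2\la_R)$ and $\mu'=w_0\mu$ one must extract a single $w$ with $w_0=w_1=w_2$; for generic $\la_L,\la_R$ this is immediate, but at non-regular infinitesimal characters the argument must negotiate Weyl group stabilizers, and one invokes Vogan's theorem that $L(\mu,\nu)$ is characterized within its infinitesimal-character class by its unique lowest $K$-type (or, as an alternative for classical complex groups, Zhelobenko's explicit classification of irreducible admissible modules).
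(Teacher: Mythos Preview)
The paper does not prove this theorem: it is stated as background material imported from \cite{V1} (``This material is taken from \cite{V1}''), with no proof given in the text. So there is no paper's proof to compare against.

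Your outline is the standard argument and is correct in substance. A few remarks. In part (1) $(\Leftarrow)$, the assertion that the intertwining operator $A_w$ is always an isomorphism on the lowest $K$-type is a little quick: at special parameters $A_w$ can have poles or act by zero on $V_\mu$, so one normalizes and takes limits, or argues instead via the uniqueness of the Langlands subquotient containing $V_\mu$ together with the fact that $X(\mu,\nu)$ and $X(w\mu,w\nu)$ have the same composition series. In part (1) $(\Rightarrow)$, you correctly identify the crux: collapsing a $W\times W$-coincidence of infinitesimal characters and a $W$-coincidence of lowest $K$-types to a single diagonal $w$. Your proposed resolution (Vogan's lowest $K$-type theorem, or Zhelobenko's classification) is exactly how this is handled in the literature; this is not a gap but a citation. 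In part (2) your computation of the hermitian dual via the $\star$-antiautomorphism interchanging $\fk g_L$ and $\fk g_R$ is the right mechanism, and the simplification under the standing hypothesis that $\nu$ is real is correct; the only caveat is that the exact formula for the effect on parameters (whether one gets $(-\overline{\la_R},-\overline{\la_L})$ or a $w_0$-twist thereof) depends on conventions for the contragredient, but either way the conclusion $(\mu,\nu)\mapsto(\mu,-\nu)$ up to $W$ survives.
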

We will write the parameter in column form as 
$
\begin{pmatrix}
  \la_L\\\la_R
\end{pmatrix}
$ mainly for display reasons.

\subsection{Parameters of Unipotent Representations}
\label{sec:upar}
{
We rely on \cite{BV2} and \cite{B1}. For each $\CO\subset\fk g$ we
will give an infinitesimal character $(\la_\CO,\la_\CO)$, and a set of
$(\la_\CO,w\la_\CO)$ such that  $\{L(\la_{\C O},w\la_{\C O})\}$ are
the unipotent representations with asymptotic support $\CO.$  
In all cases $\la_{\C O}$ and $-\la_{\C O}$ are in the same $W-$orbit.

\bigskip
\noindent\textbf{Main Properties of }$\boldsymbol{\la_{\C O}}$. 
Suppose $\Pi$ is an
irreducible representation with infinitesimal character
$(\la_\CO,\la_\CO).$ Then $\la_\CO$ and $\Pi$ must satisfy:

\begin{enumerate}
\item $\Ann\Pi\subset U(\fk g)$ is the maximal primitive ideal
  $\C I_{\la_\CO}$ with infinitesimal character $(\la_{\C O},\la_\CO)$,

\item {$\mid \{\Pi\ :\ \Ann\Pi=\C I_{\la_\CO}\}\mid =\mid \widehat{A(\C
    O)}\mid,$ where $A(\CO)$ is the component group of the centralizer
  of an $e\in \C O,$}  

\item $\Pi$ unitary.

\end{enumerate}

\be{remark} The component group $A(\C O)$ depends on the isogeny class of
  $G,$ which will be a classical group $Sp(2n,\bC)$, $SO(m,\bC)$ or
  $O(m,\bC).$ 
\ee{remark}

}
The notation is as in \cite{B1}. The choices of $\la_{\C O}$ satisfying
(3) rely on the determination of the unitary dual for classical groups
in \cite{B1}. For special orbits $\C O$ whose dual $\cCO$ is even,
$\la_{\C O}$ is half the semisimple element of the Lie triple
corresponding to the dual orbit, $\la_\CO=h(\cCO)/2$. 
For the other orbits we need a case-by-case analysis. The
parameter will always have integer and half-integer coordinates, and the
corresponding system of integral coroots is maximal. 

\begin{definition}
A special orbit $\CO$ (in the sense of Lusztig) 
is called \textbf{stably trivial} if Lusztig's
quotient $\ovl{A}(\CO)$ equals the full component group $A(\CO).$
\end{definition}
For a definition and discussion of $\ovl{A}(\CO),$ see \cite{L},
chapter 13.

\medskip
{The partitions in the next examples denote rows}.
\begin{example}
$\CO=(2222)\subset sp(8)$ is stably
  trivial, $A(\CO)=\ovl{A(\CO)}\cong\bZ_2$, $\la_\CO=(2,1,1,0).$ 
In this case $\cCO$ corresponds to the partition $(531),$ and
$\la_\CO=h(\cCO)/2.$ 

$\CO=(222)\subset sp(6)$ has dual orbit $\cCO$ corresponding to $(331)$
but is not stably trivial; $A(\CO)\cong\bZ_2,$ while $\ovl{A(\C O)}\cong 1$.
In this case $h(\cCO)/2=(1,1,0),$ and for
this infinitesimal character, conditions (1) and (3) are satisfied,
but (2) is not satisfied. The choice of infinitesimal character in
this case will be $\la_\CO=(3/2,1/2,1/2).$ There are two parameters, 
$$
\be{pmatrix}
\la_L\\ \la_R
\ee{pmatrix}=
\be{pmatrix}
3/2&1/2&1/2\\ 
3/2&1/2&1/2
\ee{pmatrix}\qquad\text{ and }\qquad
\be{pmatrix}
3/2&1/2&1/2\\ 
3/2&1/2&-1/2
\ee{pmatrix}
$$  
\end{example}
\subsection{Type A}\label{sec:a} The group $G$ is $GL(n).$
Nilpotent orbits are determined by their
Jordan canonical form. An orbit is given by a partition, \ie a 
sequence of numbers in decreasing order $\CO\longleftrightarrow (n_1,\dots
,n_k)$ that add up to $n.$  Let $(m_1,\dots ,m_l)$ be the dual
partition. The component group of $\C O$ is trivial. 
The infinitesimal character is 
$$
\la_\CO=\bigg(\frac{m_1-1}{2},\dots ,-\frac{m_1-1}{2},\dots
,\frac{m_l-1}{2},\dots, -\frac{m_l-1}{2}\bigg).
$$
The orbit is induced from the trivial orbit on the Levi component $\fm$
of a parabolic subalgebra  $\fp=\fm +\fn$ with 
$\fm=gl(m_1)\times \dots \times gl(m_l).$ The corresponding
unipotent representation is spherical and induced irreducible from the
trivial representation on the same Levi component. {All orbits are
{\it special } and {\it stably trivial}}.
\subsection{Type B}\label{sec:b} 
{
We describe the case $SO(2m+1)$. For $O(2m+1)$ there are twice the
parameters, the parameters for $SO$  are tensored with $sign$.

\medskip 
A nilpotent orbit is determined by its
Jordan canonical form (in the standard representation). Then $\CO$  is
parametrized by a partition $\CO\longleftrightarrow (n_1,\dots ,n_k)$
of $2m+1$ such that every even entry occurs an even number of times. 
Let $(m'_0,\dots ,m'_{2p'})$ be the transpose partition (add an $m'_{2p'}=0$ if
necessary, in order to have an odd number of terms).  If $\CO$ is
represented by a tableau, these are the sizes of the columns in
decreasing order.
If there are any $m'_{2j}=m'_{2j+1}$, then pair them
together and remove them from the partition. 
Then relabel and pair up the remaining columns $(m_0)(m_1,m_2)\dots
(m_{2p-1}m_{2p}).$ The members of each 
pair have the same parity and $m_0$ is odd. $\la_\CO$ is given by the
coordinates 
\begin{equation}
  \label{eq:unipb}
\aligned
(m_0)&\longleftrightarrow (\frac{m_0-2}{2},\dots ,\frac12),\\
(m'_{2j}=m'_{2j+1})&\longleftrightarrow (\frac{m_{2j}'-1}{2},\dots , 
-\frac{m_{2j}'-1}{2})\\
(m_{2i-1}m_{2i})&\longleftrightarrow (\frac{m_{2i-1}}{2},\dots , -\frac{m_{2i}-2}{2}).
\endaligned  
\end{equation}
}

In case $m'_{2j}=m'_{2j+1},$ $\CO$ is induced from an orbit
$$
\CO_\fk m\subset\fm=so(*)\times gl\big(\frac{m'_{2j}+m'_{2j+1}}{2}\big)
$$ 
where $\fk m$ is the
Levi component of a parabolic subalgebra $\fp=\fm +\fn$. $\CO_\fk m$ is 
the trivial nilpotent on the $gl-$factor. The
component groups satisfy $A_G(\CO)\cong A_M(\CO_{\fk m}).$ 
Each unipotent representation is
unitarily induced from a unipotent representation attached to 
$\CO_\fk m.$  

Similarly if some $m_{2i-1}=m_{2i},$ then $\CO$ is
induced from a  
$$
\CO_{\fk m}\subset \fk m\cong so(*)\times
gl(\frac{m_{2i-1}+m_{2i}}{2})\quad (0)\quad  \text{ on the
  gl-factor.}
$$ 
$A_G(\CO)\not\cong A_M(\CO_{\fk m}),$ but
each unipotent representation is (not necessarily unitarily) induced
irreducible from a representation on the Levi component
$\fk m$, unipotent on
$so(*)$, and a character on the gl-factor. 

\medskip

{The {\it stably trivial} orbits are the
ones such that every odd sized part appears an even number of
times, except for the largest size}. An orbit is called triangular if it has
partition 
$$
\CO\longleftrightarrow(2m+1,2m-1,2m-1,\dots,3,3,1,1).
$$ 


We give the explicit Langlands parameters of the unipotent 
representations. There are $\mid A_G(\CO)|$ distinct representations. 
Let 
$$
(\underset{r_k}{\underbrace{k,\dots ,k}},\dots
,\underset{r_1}{\underbrace{1,\dots 1}})
$$ 
be the rows of the Jordan form of the
nilpotent orbit. The numbers $r_{2i}$ are even. 
The reductive part of the centralizer (when $G$ is the orthogonal
group) of the nilpotent element is a product of $O(r_{2i+1})$, and
$Sp(r_{2j})$.

The columns are paired as in (\ref{eq:unipb}). The pairs
$(m'_{2j}=m'_{2j+1})$ contribute to the spherical part of the
parameter,
\begin{equation}
  \label{eq:unipbprime}
(m'_{2j}=m'_{2j+1})\longleftrightarrow 
\begin{pmatrix}
  \la_L\\ \la_R
\end{pmatrix}
=
\begin{pmatrix}\frac{m'_{2j}-1}{2}&,&\dots&,& 
&-\frac{m'_{2j}-1}{2}\\
\frac{m'_{2j}-1}{2}&,&\dots&,& &-\frac{m'_{2j}-1}{2}
\end{pmatrix}  .
\end{equation}
The singleton $(m_0)$ contributes to the spherical part,
\begin{equation}
\label{eq:unipbzero}
(m_0)\longleftrightarrow
 \begin{pmatrix}
\frac{m_{0}-2}{2}&,&\dots&,&\frac{1}{2}\\
\frac{m_{0}-2}{2}&,&\dots&,&\frac{1}{2}
\end{pmatrix}  . 
\end{equation}
Let $(\eta_1,\dots ,\eta_{p})$ with $\eta_i=\pm 1,$ one for each
$(m_{2i-1},m_{2i})$. An $\eta_i=1$ contributes to the spherical part
of the parameter, with coordinates as in (\ref{eq:unipbprime}) and (\ref{eq:unipbzero}). An
$\eta_i=-1$ contributes
\begin{equation}
\label{eq:unipbep}
\begin{pmatrix}
\frac{m_{2i-1}}{2}&,&\dots&,&\frac{m_{2i}+2}{2}&\frac{m_{2i}}{2}&,&\dots
&,&-\frac{m_{2i}-2}{2}\\
\frac{m_{2i-1}}{2}&,&\dots&,&\frac{m_{2i}+2}{2}&\frac{m_{2i}-2}{2}&,&\dots
&,&-\frac{m_{2i}}{2}
\end{pmatrix}.
\end{equation}
{If $m_{2p}=0,$ $\eta_p=1$ only for $SO$.}
\subsection{Explanation}
  \begin{enumerate}
  \item Odd sized rows contribute a $\bZ_2$ to $A(\CO),$ 
    even sized rows a $1.$
\item When there are no $m'_{2j}=m'_{2j+1},$ every row size occurs. 
{The inequalities
$$\dots (m_{2i-1}\ge m_{2i})>(m_{2i+1}\ge  m_{2i+2})\dots
$$} 
imply that there are $m_{2i}-m_{2i+1}$ rows of size $2i+1.$ Each
pair $(m_{2i-1}\ge m_{2i})$ contributes exactly 2 parameters
corresponding to the $\bZ_2$ in $A(\CO)$. 
\item The pairs $(m'_{2j}=m'_{2j+1})$ lengthen the sizes of the rows
  without changing their parity. The component group does not change,
  they do not affect the  number of parameters.
  \end{enumerate}

\bigskip
As already mentioned, when $G=O(2m+1,\bC)$ the unipotent
representations are obtained from those of $SO(2m,\bC)$ by lifting
them to $O(2m,\bC)$, and also tensoring with $sgn$.

\subsection{Type C}\label{sec:c} 

{
A nilpotent orbit is determined by its
Jordan canonical form (in the standard representation). It is
parametrized by a partition \newline $\CO\longleftrightarrow(n_1,\dots ,n_k)$ 
of $2n$ such that
every odd part occurs an even number of times. 
Let $(c'_0,\dots ,c'_{2p'})$ be the dual partition (add a $c'_{2p'}= 0$ if
necessary in order to have an odd number of terms). As in type B,
these are the sizes of the columns of the tabelau corresponding to $\CO$.
If there are any $c'_{2j-1}=c'_{2j}$ pair them up and
remove them from the partition. Then relabel and pair
up the remaining columns $(c_0c_1)\dots
(c_{2p-2}c_{2p-1})(c_{2p}).$ The members of each pair have the same
parity. The last one, $c_{2p},$ is always even. Then
form a parameter
\begin{align}
(c'_{2j-1}=c'_{2j})&\longleftrightarrow (\frac{c_{2j}-1}{2},\dots , 
-\frac{c_{2j}-1}{2}),\label{eq:c1}\\
(c_{2i}c_{2i+1})&\longleftrightarrow (\frac{c_{2i}}{2},\dots , 
-\frac{c_{2i+1}-2}{2}),\label{eq:c2}\\
c_{2p}&\longleftrightarrow (\frac{c_{2p}}{2},\dots , 1).\label{eq:c3}
\end{align}
}
{
The nilpotent orbits and the unipotent representations have the same
properties with respect to these pairs as the corresponding ones in
type B. 

{The {\it stably trivial} orbits are the
ones such that every even sized part appears an even number of
times}. 

An orbit is called triangular if it corresponds to the
partition \newline
$(2m,2m,\dots,4,4,2,2).$ 


\bigskip
We give a parametrization of the unipotent representations in terms of
their Langlands parameters. There are $\mid A_G(\CO)\mid$
representations. 

Let 
$$
(\underset{r_k}{\underbrace{k,\dots ,k}},\dots
,\underset{r_1}{\underbrace{1,\dots ,1}})
$$ 
be the rows of the Jordan form of the
nilpotent orbit. The numbers $r_{2i+1}$ are even. 
}
{
The reductive part of the centralizer of the
nilpotent element is a product of $Sp(r_{2i+1})$, and $O(r_{2j})$. 

The elements $(c'_{2j-1}=c'_{2j})$ and $c_{2p}$ contribute to the
spherical part of the parameter as in (\ref{eq:unipbprime}) and
(\ref{eq:unipbzero}). 
Let $(\eta_1,\dots ,\eta_p)$ be such that $\eta_i=\pm 1,$ one for each
$(c_{2i},c_{2i+1}).$  An $\eta_i=1$ contributes to the spherical
part, according to the infinitesimal character. An $\eta_i=-1$ contributes
\begin{equation}
\label{eq:unipcep}
\begin{pmatrix}
&\frac{c_{2i}}{2}&,&\dots&,&\frac{c_{2i+1}+2}{2}&\frac{c_{2i+1}}{2}&\dots
&,&-\frac{c_{2i+1}-2}{2}\\ 
&\frac{c_{2i}}{2}&,&\dots&,&\frac{c_{2i+1}+2}{2}&\frac{c_{2i+1}-2}{2}&\dots &,&-\frac{c_{2i+1}}{2}\\ 
\end{pmatrix}.
\end{equation}
}

\bigskip
The explanation is similar to type B.
\subsection{Type D}\label{sec:d} 

{
We treat the case $G=SO(2m).$ 
A nilpotent orbit is determined by its
Jordan canonical form (in the standard representation). It is
parametrized by a partition $\CO\longleftrightarrow (n_1,\dots ,n_k)$ 
of $2m$ such that
every even part occurs an even number of times. 
Let $(m'_0,\dots ,m'_{2p'-1})$ be the dual partition (add a $m'_{2p'-1}=0$ if
necessary), the sizes of the columns of the tableau corresponding to
$\CO.$  If there are any $m'_{2j}=m'_{2j+1}$ pair them up and
remove from the partition. 
Then pair up the remaining columns $(m_0,m_{2p-1})(m_1,m_2)\dots
(m_{2p-3},m_{2p-2}).$ The members of each pair have the same parity and
$m_0, m_{2p-1}$ are both even. The infinitesimal character is
\begin{equation}
  \label{eq:unipd}
\aligned
(m'_{2j}=m'_{2j+1})&\longleftrightarrow (\frac{m'_{2j}-1}{2}\dots , -\frac{m'_{2j}-1}{2})\\
(m_0m_{2p-1})&\longleftrightarrow (\frac{m_0-2}{2},\dots ,-\frac{m_{2p-1}}{2}),\\
(m_{2i-1}m_{2i})&\longleftrightarrow (\frac{m_{2i-1}}{2}\dots , -\frac{m_{2i}-2}{2}) 
\endaligned  
\end{equation}
}
{
The nilpotent orbits and the unipotent representations have the same
properties with respect to these pairs as the corresponding ones in
type B. 
An exception occurs for $G=SO(2m)$ when the partition is formed of pairs
$(m'_{2j}=m'_{2j+1})$ only. In this case there are two nilpotent
orbits corresponding to the partition. There are also two nonconjugate
Levi components of the form $gl(m'_0)\times gl(m'_2)\times \dots
gl(m'_{2p'-2})$ of parabolic subalgebras. There are two unipotent
representations each induced irreducible from the trivial
representation on the corresponding Levi component. 

{The {\it stably
trivial} orbits are the ones such that every even sized part appears
an even number of times}. 

A nilpotent orbit is triangular if it
corresponds to the partition\newline $(2m-1,2m-1,\dots ,3,3,1,1).$ 

}
{
The parametrization of the unipotent representations follows types
B,C, with the pairs $(m'_{2j}=m'_{2j+1})$ and $(m_0,m_{2p-1})$
contributing to the spherical part of the parameter only. Similarly for
$(m_{2i-1},m_{2i})$ with $\ep_i=1$ spherical only, while $\ep_i=-1$
contributes analogous to (\ref{eq:unipbep}) and (\ref{eq:unipcep}).

The explanation parallels that for types B,C. 

\bigskip
When $G=O(2m,\bC)$ the unipotent representations are obtained from
those of $SO(2m,\bC)$ by lifting them to $O(2m,\bC)$, and also
tensoring with $sgn$. In the case when all $m'_{2j}=m'_{2j+1}$ the 
representations associated to the two nilpotent orbits have the same
lift, and it is invariant under tensoring with $sgn$. Otherwise
tensoring with $sgn$ gives inequivalent unipotent representations.
}
\section{Theta Correspondence}\label{sec:metaplectic}

We deal with the complex pairs
$G_1\times G_2$ where one group is orthogonal the other
symplectic. The results are from \cite{AB1}. Let $V_i$  for $i=1,2$ be
spaces endowed with nondegenerate forms, one symplectic the other
orthogonal. Then $\C W=V_1\otimes V_2,$ is symplectic, and $G_1\times
G_2:=G(V_1)\times G(V_2)$ is a dual pair. Up to isomorphism,
$(G_1,G_2)$ is $(O(n,\bC),Sp(2m,\bC))$ or $(Sp(2m,\bC),O(n,\bC)).$ 
Let $\tau=0,1$ depending whether $n$ (for the orthogonal group) is even or odd. 

\subsection{Complex Pairs}\label{sec:cx} 
Let $\big(V_0,\langle\ ,\ \rangle_0\big)$ be a real symplectic vector
space. We  can view $\langle \ ,\ \rangle_0$ as a linear map $\C
J_0:V_0\longrightarrow V_0'$ ($V_0'$ the linear dual of $V_0$) 
satisfying $\C J_0^t=-\C J_0,$ so that the
symplectic form is given by

\begin{equation}
  \label{eq:sform}
  \langle v_1,v_2\rangle_0=(\C J_0v_2)(v_1).
\end{equation}

Let $V_\bC=V_0+iV_0$ be the complexification of $V_0$, and $\sform{\ }{\ }$ be
the complexification of $\sform{\ }{\ }_0.$ It satisfies
\begin{equation}
  \label{eq:sform1}
  \sform{v_1+iv_2}{w_1+iw_2}=\big({\sform{v_1}{w_1}}_0 -
{\sform{v_2}{w_2}}_0\big)
+ i \big({\sform{v_1}{w_2}}_0+\sform{v_2}{w_1}_0\big)
\end{equation}
The complex symplectic Lie algebra $\fk g_0:=sp(V_\bC)$ is the algebra
preserving $\sform{\ }{\ }.$ Let $\C V=V_0\oplus V_0$ be the real
vector space identified with $V_\bC$ in the usual way,
$v_1+iv_2\longleftrightarrow (v_1,v_2)$. An element $a=\al
+i\beta\subset sp(V_\bC)$ is then
\begin{equation}
  \label{eq:sform1.1}
\al +i\beta\longleftrightarrow     
  \begin{bmatrix}
    \al&-\beta\\\beta&\al
  \end{bmatrix}.
\end{equation}
The real part and imaginary part of $\sform{\ }{\ }$ are symplectic
(nondegenerate) forms on $\C V;$ denote them by $\sform{\ }{\ }_{re}$
and $\sform{\ }{\ }_{im}.$ In terms of skew maps from $\C V$ to $\C
V',$ they are
\begin{equation}
  \label{eq:sform3}
  \begin{aligned}
    &\sform{\ }{\ }_{re}\longleftrightarrow
    \begin{bmatrix}
      \C J_0&0\\0&-\C J_0
    \end{bmatrix},\\
    &\sform{\ }{\ }_{im}\longleftrightarrow
    \begin{bmatrix}
      0&\C J_0\\\C J_0&0
    \end{bmatrix}.
  \end{aligned}
\end{equation}
View $sp(V_\bC)$ as a real Lie algebra. Then $sp(V_\bC)$ embeds in $\big(sp(\C
V),\sform{\ }{\ }_{re,im}\big)$ via formula (\ref{eq:sform1.1}). We
choose $\sform{\ }{\ }_{re},$ and note that $sp(V_\bC),\ sp(\C V)$ are
invariant under transpose, and the  inclusion $sp(V)\subset sp(\C V)$
commutes with the transpose map. We will view $sp(V)$ as the Lie subalgebra of
$sp(\C V)$ under the inclusion (\ref{eq:sform1.1}).

The Cartan decomposition  of (the real Lie algebra) $\fk g_0:=sp(V_\bC)$ is 
\begin{equation}
  \label{eq:cartan}
  \begin{aligned}
&\fk g_0=\fk k_0 +\fk s_0,  \\
&\fk k_0=\{ \al +i\beta\ :\ (\al +i\beta)+(\al-i\beta)^t=0\},\\
&\fk s_0=\{ \al +i\beta\ :\ (\al +i\beta)-(\al-i\beta)^t=0\}.
\end{aligned}
\end{equation}
Similarly the Cartan decomposition of $\fk g_\C V:=sp(\C V)$ is 
\begin{equation}
  \label{eq:bigcartan}
  \begin{aligned}
&\fk g_\C V=\fk k_\C V +\fk s_\C V,  \\
&\fk k_\C V=\{A\in sp(\C V)\ :\ A + A^t=0\},\\
&\fk s_\C V=\{A\in sp(\C V)\ :\ A-A^t=0\}.
\end{aligned}  
\end{equation}
In particular, $\fk k_0\subset \fk k_{\C V}$ and $\fk s_0\subset\fk
s_\C V.$ 
\subsubsection{A Variant} Let $(V,\langle\ ,\ \rangle)$ be a
symplectic complex space with form corresponding to 
$\disp{\C J=
\begin{bmatrix}
  0&I\\-I&0
\end{bmatrix}.}$ Then $sp(2n,\bC)$ embeds in $sp(4n,\bC)$ with the
usual symplectic form as
$$
\begin{bmatrix} 
  \al&\beta\C J\\-\C J\beta&-\al^t
\end{bmatrix}
$$
where $\al,\beta\in sp(2n)_c$ is the compact real form of
$sp(2n,\bC).$ Multiplication by $\sqrt{-1}$ corresponds to 
$$
m_{\sqrt{-1}}:
\begin{bmatrix}
  \al&0\\0&-\al^t
\end{bmatrix}
\longrightarrow 
\begin{bmatrix}
  0&\al\C J\\-\C J\al&0
\end{bmatrix}.
$$
The complexification $sp(2n.\bC)_c\subset sp(4n,\bC)$ is the same,
replace \newline $\al,\beta\in sp(2n,\bR)$ by $\al,\beta\in sp(2n,\bC).$

\subsection{Oscillator Representation} Let $\Om_{\C V}=\Om_++\Om_-$  
be the oscillator representation of $sp(\C V)=sp(4n,\bb R).$ 

The following is well known (and straightforward).

\begin{theorem}\ 

  \begin{enumerate}
  \item The pairs $(O(m,\bb C),Sp(2n,\bb C))\subset Sp(2mn,\bb
    C)\subset Sp(4mn,\bb R)$ are dual pairs.
\item The restrictions to $sp(2n,\bb C)$ of $\Om_\pm$ are irreducible
  and equal to the two representations of $sp(2n,\bb C)$ corresponding
  to the minimal nontrivial nilpotent orbit. 
  \end{enumerate}
\end{theorem}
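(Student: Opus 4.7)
The two statements are essentially independent, so I would prove them in order, using a centralizer computation for (1) and a Howe-duality argument for (2).

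For (1), the plan is to compute the centralizer of $Sp(2n,\bb C)$ in $Sp(4mn,\bb R)$ and verify it equals $O(m,\bb C)$; the other inclusion follows by a symmetric argument. Any $\bb R$-linear endomorphism of $V_1\otimes_\bb C V_2$ commuting with the $\bb C$-linear $Sp(V_2)$-action splits into $\bb C$-linear and $\bb C$-antilinear parts, each of which must commute separately. The antilinear part has to vanish: a $\bb C$-antilinear $Sp(V_2)$-intertwiner $\tau$ would satisfy $\tau\circ g=\bar g\circ\tau$, forcing $\bar g=g$ for every $g\in Sp(V_2,\bb C)$, which is absurd. Hence the $\bb R$-commutant equals the $\bb C$-commutant, which by the classical double commutant theorem is $\End_\bb C(V_1)=M_m(\bb C)$ acting on the first factor. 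Intersecting with $Sp(4mn,\bb R)=Sp\bigl((V_1\otimes V_2)_\bb R,\sform{\ }{\ }_{re}\bigr)$ via the real form (\ref{eq:sform3}): an element $g\in GL(V_1,\bb C)$ acts as $g\otimes I$ and preserves $\sform{\ }{\ }_{re}$ iff $\text{Re}\bigl(\sform{gv_1}{gw_1}_1\cdot\sform{v_2}{w_2}_2\bigr)=\text{Re}\bigl(\sform{v_1}{w_1}_1\cdot\sform{v_2}{w_2}_2\bigr)$ for all $v_i,w_i$. Since $\sform{v_2}{w_2}_2$ ranges over all of $\bb C$, this collapses to $\sform{gv_1}{gw_1}_1=\sform{v_1}{w_1}_1$, exactly the defining condition for $g\in O(m,\bb C)$.

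For (2), I specialize (1) to $m=1$: then $V_1=\bb C$ with the symmetric form, $V_1\otimes V_2\cong V_2$, and the dual pair becomes $(O(1,\bb C),Sp(2n,\bb C))$ inside $Sp(4n,\bb R)$, with $O(1,\bb C)=\{\pm I\}$. The central element $-I$ acts on the oscillator representation $\Om_{\C V}$ by the parity $(-1)^{\deg}$ in the Fock model, so the decomposition $\Om_{\C V}=\Om_++\Om_-$ into even/odd pieces is precisely the $O(1,\bb C)$-isotypic decomposition. Since $O(1,\bb C)$ is finite (hence compact), Howe duality for this compact-noncompact pair yields that each $\Om_\pm$ is an irreducible $(\fk g,K)$-module for $Sp(2n,\bb C)$, with $\Om_+$ paired to the trivial character and $\Om_-$ to the sign character.

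To identify $\Om_\pm$ with the two unipotent representations attached to the minimal nilpotent orbit $\C O\subset sp(2n,\bb C)$ (partition $(2,1^{2n-2})$), I would: compute the infinitesimal character of $\Om_\pm$ from the embedding (\ref{eq:sform1.1}) combined with the standard Weil-representation formulas for $sp(4n,\bb R)$, obtaining $(\la_{\C O},\la_{\C O})$ with $\la_{\C O}=(\frac{2n-1}{2},\dots,\frac{1}{2})$ exactly as prescribed by the recipe in Section~\ref{sec:c}; bound the associated variety of $\Ann\Om_\pm$ by $\ovl{\C O}$ via a Gelfand--Kirillov dimension estimate on the Fock model; and match the Langlands parameters against (\ref{eq:c2}) and (\ref{eq:unipcep}) with $(c_0,c_1)=(2n-1,1)$, noting that $A(\C O)=\bb Z_2$ produces exactly two parameters, one spherical ($\Om_+$) and one non-spherical ($\Om_-$). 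The main obstacle, in my estimation, is this final parameter-matching step: confirming that $\Om_-$ really corresponds to the non-spherical parameter of type (\ref{eq:unipcep}) with $\eta_1=-1$, rather than to some other irreducible sharing the same infinitesimal character and associated variety. I expect this requires isolating a distinguished $Sp(2n)_c$-type of $\Om_-$ in the oscillator model and tracing it through the Langlands classification of \cite{B1}.
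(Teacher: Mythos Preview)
The paper does not prove this theorem; it is introduced with ``The following is well known (and straightforward)'' and no argument is given. Your proposal therefore supplies what the paper omits, and the overall strategy is sound and standard.

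One correction in (1): from $\tau\circ g=\bar g\circ\tau$ you cannot conclude $\bar g=g$. What you actually obtain (via Schur, once $\tau\ne 0$) is that the holomorphic standard representation of $Sp(2n,\bb C)$ is isomorphic to its antiholomorphic conjugate as representations of the complex group, and \emph{that} is what fails. With this rephrasing the centralizer computation goes through.

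For (2), your specialization to $m=1$ and appeal to Howe duality for the pair $(O(1,\bb C),Sp(2n,\bb C))$ is exactly the right move and entirely in the spirit of the paper's later use of the correspondence. Your worry about the final parameter match for $\Om_-$ is overstated: once you know $\Om_\pm$ are irreducible and unitary with infinitesimal character $(\la_{\C O},\la_{\C O})$ and associated variety $\ovl{\C O}_{min}$, the count $|A(\C O)|=2$ from Section~\ref{sec:c} leaves exactly two unipotent candidates, one spherical and one not. Since $\Om_+$ contains the constants (the trivial $K$-type) it is the spherical one, forcing $\Om_-$ to be the other. No delicate $K$-type chase through \cite{B1} is required.
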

\subsection{Infinitesimal Character} 

\begin{proposition}\label{p:infl}

Suppose $\pi_1$ corresponds to $\pi_2$ in the dual pair
correspondence for $(G_1,G_2)$. Let  $( \lambda_1,\lambda_1')$ 
be the infinitesimal character of
$\pi_1$.  Write the infinitesimal character of $\pi_2$ as
$( \lambda_2,\lambda_2')$.
Then for $\lambda_2$ and $\lambda_2'$ we may take
$\lambda_2=\lambda_1\cdot\tilde\lambda$ and
$\lambda_2'=\lambda_1'\cdot\tilde\lambda$
with $\tilde\lambda$ as follows:
\begin{enumerate}
\item $(O(m),Sp(2n)),\ [\tfrac m2]\le n$:
$\tilde\lambda=(n-m/2,n-m/2-1,\dots,1-\tau/2),$
\item $(Sp(2m),O(n)),\ m\le[\tfrac n2]$:
$\tilde\lambda=(n/2-m-1,n/2-m-2,\dots,\tau/2),$
\item $(GL(m),GL(n)),\ m\le n$:
$\tilde\lambda=
\frac12(n-m-1,n-m-3,\dots,-n+m+1)$.
\end{enumerate}
Here  $\cdot$ indicates concatenation of sequences.
\end{proposition}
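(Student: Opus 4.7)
The plan is to reduce the proposition to a computation of central character on the oscillator representation, and then track Harish-Chandra $\rho$-shifts carefully. Throughout I exploit the decomposition $\fk g_\bC = \fk g_L + \fk g_R$ from the opening lemma, which makes the computation for $\lambda$ and for $\lambda'$ run independently and in parallel; I describe only the $\lambda$ case below.

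First I would set up the joint action. Using the embedding $sp(V_\bC)\hookrightarrow sp(\C V)$ from (\ref{eq:sform1.1}) and the analogous embedding of $G_2$, the tensor product $\mathcal{W}=V_1\otimes V_2$ realizes $\fk g_1\times\fk g_2$ as a dual pair in $sp(4mn,\bR)$, and the oscillator representation $\Omega_{\C V}$ is realized on a polynomial space where the $\fk g_i$-action is explicit via quadratic expressions in creation/annihilation operators. The proposition then reduces to showing that on any joint $(\fk g_1,\fk g_2)$-irreducible constituent of $\Omega_{\C V}$, the central character of $\fk g_2$ is determined by that of $\fk g_1$ by a shift independent of the constituent.

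Next I would invoke Howe's duality for the centers. The two maps $Z(\fk g_i)\to \mathrm{End}(\Omega_{\C V})$ have the same image; call it $\C Z$. This gives a canonical algebra isomorphism $\phi\colon Z(\fk g_1)^{(\text{image})}\to Z(\fk g_2)^{(\text{image})}$, so that if $\pi_1$ corresponds to $\pi_2$ then their infinitesimal characters are matched by $\phi$. Via the Harish-Chandra isomorphism $Z(\fk g_i)\cong S(\fk h_i)^{W_i}$, the map $\phi$ becomes a polynomial correspondence of Weyl-group invariants, which I would compute on a convenient generating set (e.g. the traces $\mathrm{tr}(X^{2k})$ for classical Lie algebras) by evaluating on rank-one or highest-weight vectors of $\Omega_{\C V}$. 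The outcome is that the eigenvalues of these invariants on $\pi_2$ coincide with those obtained by inserting the sequence $\tilde\lambda$ alongside $\lambda_1$, up to a universal shift coming from the two $\rho$'s.

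Finally I would identify $\tilde\lambda$ explicitly. Its entries arise from the partial-trace contribution in the quadratic formulas for the $\fk g_2$-action: for the $(O(m),Sp(2n))$ pair with $[m/2]\le n$, only $[m/2]$ of the $n$ ``diagonal'' directions of $sp(2n)$ are filled by the pairing with $\fk g_1=o(m)$; the remaining $n-[m/2]$ contribute a chain of consecutive eigenvalues starting at $n-m/2$ and ending at $1-\tau/2$, where $\tau\in\{0,1\}$ encodes the parity of $m$ through the discrepancy between the $\rho$'s of $so(2k)$ and $so(2k+1)$. The other cases are handled symmetrically, and the half-integer sequence in case (3) reflects the GL $\rho$-shift $(n-1)/2,(n-3)/2,\dots$.

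The main obstacle is not the existence of such a $\tilde\lambda$, which is formal once the centers are matched, but pinning down its exact entries and endpoints, especially the parity term $\tau$ and the ``off-by-one'' issues ($n/2-m-1$ versus $n/2-m$, etc.). These are sensitive to the choice of normalization of the oscillator representation (whether one uses $\Omega_+$, $\Omega_-$, or a mix), the sign conventions in (\ref{eq:sform3}), and the convention identifying $\fk g$ with $\fk g_L$ from the opening lemma, so the careful bookkeeping of these half-integer shifts is where the real work lies.
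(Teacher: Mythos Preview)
The paper does not give a proof of this proposition: it is stated in Section~3.3 without argument and is taken from \cite{AB1} (see also Proposition~\ref{p:basic} and Theorem~\ref{t:abmain}, both explicitly attributed to \cite{AB1}). So there is nothing in the present paper to compare your attempt against.

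That said, your outline is the standard one and is essentially the argument in \cite{AB1}: one uses that the images of $Z(\fk g_1)$ and $Z(\fk g_2)$ in the endomorphisms of the oscillator representation coincide (Howe's duality for centers), then computes the induced correspondence of Harish--Chandra parameters by evaluating on explicit highest-weight vectors in the Fock model, with the sequence $\tilde\lambda$ arising from the unmatched Cartan directions together with the $\rho$-shifts. Your identification of the bookkeeping of $\tau$ and the endpoint conventions as the only genuine content is accurate; the proposition is a computation, not a theorem with a conceptual obstacle.
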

\subsection{Langlands Parameters} 

$K-$types for $O(n)$ are parametrized as in Weyl's work using the standard
embedding  $O(n)\subset U(n).$ 
An irreducible representation of $O(n)$ is parametrized by 
$$
(a_1,\dots, a_{k},0\dots 0;\ep)
$$
with $\dots a_i\ge a_{i+1}\ge \dots a_k>0$ integers and $\ep =0,1$ so that
the representation is the $O(n)-$irreducible component generated by
the highest weight of the representation of $U(n)$ with highest weight
$$
(a_1,\dots ,a_k,{\underbrace{1,\dots ,1}_{n-2k},0,\dots 0}).
$$
The basic cases for the correspondece are summarized in the next
proposition. The general case is the next theorem. 
\begin{proposition}\label{p:basic}[Proposition 2.1,\cite{AB1}, 
Basic Cases (Type I)]
\begin{enumerate}
\item Let $triv$ be the trivial representation of $O(m,\bb C)$. Then for 
any $n\ge0$, $\Theta(triv)$ is the unique irreducible spherical 
representation of $Sp(2n,\bb C)$ with infinitesimal character given by 
Proposition \ref{p:infl}. Thus\newline 
$\Theta(triv)=L(0,\nu)$ with $\nu=(m-2,m-4,\dots,m-2n)$. In terms of
$\la_L,\la_R,$ the parameter is
$$
\be{pmatrix}
\la_L\\ \la_R
\ee{pmatrix}=
\begin{pmatrix}
m/2-1,\dots ,m/2-n\\
m/2-1,\dots  ,m/2-n 
\end{pmatrix}
$$

\item Let $triv$ be the trivial representation of $Sp(2m,\bb C)$. 

(a) For any even $n\ge 0$, $\Theta(triv)$ is the unique irreducible spherical 
representation of $O(n,\bb C)$ with infinitesimal character given by 
Proposition \ref{p:infl}. 
Thus 
$\Theta(triv)=L(0,\nu)$ with 
$$
\nu=(2m,2m-2,\dots,2m-n+2).
$$ 
In terms of $\la_L,\la_R$ the parameter is
$$
\be{pmatrix}
\la_L\\ \la_R
\ee{pmatrix}=
\be{pmatrix}
\la_L\\ \la_R
\ee{pmatrix}=
\begin{pmatrix}
m,m-1,\dots m-n/2+1\\
 m,m-1,\dots m-n/2+1 
\end{pmatrix}
$$

(b) If $n$ is odd, then $triv$ occurs in the correspondence with
$O(n,\bb C)$ if and only if 
$n>2m$, and the same conclusion as in (a) holds with 
$$
\nu=(2m,2m-2,\dots,2,-1,-3,\dots,2m-n+2).
$$ 
In terms of $\la_L,\la_R,$
the parameter is
$$
\be{pmatrix}
\la_L\\ \la_R
\ee{pmatrix}=
\begin{pmatrix}
m,m-1,\dots ,1,-1/2,\dots ,m-n/2+1\\
m,m-1,\dots ,1,-1/2,\dots ,m-n/2+1 
\end{pmatrix}
$$

\item The $sgn$ representation of $O(m,\bb C)$ occurs in the 
representation correspondence with $Sp(2n,\bb C)$ if and only if $n\ge 
m$.

(a) For $n=m,$ $\Theta(sgn)$ is the unique irreducible representation 
of $Sp(2m,\bb C)$ with lowest K--type equal to the K--type pairing with 
the $sgn$ representation of $O(m)$  (cf. Proposition 1.4 in \cite{AB1}), and 
infinitesimal character given by Proposition \ref{p:infl}. 
Thus $\Theta(sgn)=L(\mu,\nu)$ with
$$
\begin{pmatrix}
\mu\\
\nu
\end{pmatrix}
=
\begin{pmatrix}
1&1&\dots &1\\
m-1&m-3&\dots&-m+1    
\end{pmatrix}
$$
In terms of $\la_L,\la_R,$ the parameter is
$$
\be{pmatrix}
\la_L\\ \la_R
\ee{pmatrix}=
\begin{pmatrix}
m/2+1/2,\dots ,-m/2+1/2\\
m/2-1/2,\dots ,-m/2-1/2  
\end{pmatrix}
$$
(b) For $n>m$, let $P$ be a parabolic subgroup of $Sp(2n,\bb C)$ with Levi
factor $M=GL(n-m,\bb C)\times Sp(2m,\bb C)$. Then $\Theta(sgn)$ is the unique
irreducible subquotient of
$$
Ind_{P}^{Sp(2n,\bb C)} (|det|^{n+1}\otimes\Theta_m(sgn))
$$
containing the lowest K--type  of this induced  representation. 
Here $\Theta_m$ denotes the $\Theta$--lift from $O(m,\bb C)$ to 
$Sp(2m,\bb C)$. Explicitly: 
$\Theta(sgn)=L(\mu,\nu)$ with 
$$
\begin{aligned}
\mu&=(\overbrace{1,\dots ,1}^m,0,\dots ,0),\\
\nu&=(\overbrace{m-1,m-3,\dots ,-m+1}^m,2n-m,2n-m-2,\dots,m+2).
\end{aligned}
$$
In terms of $\la_L,\la_R,$ the parameter is
$$
\be{pmatrix}
\la_L\\ \la_R
\ee{pmatrix}=
\begin{pmatrix}
m/2-1/2,\dots ,-m/2+3/2,n-m,\dots ,m/2+1\\
m/2-3/2,\dots ,-m/2+1/2,n-m,\dots ,m/2+1  
\end{pmatrix}
$$
\end{enumerate}
\end{proposition}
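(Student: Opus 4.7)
The plan is to treat the three cases uniformly by combining three ingredients: the infinitesimal character $\tilde\la$ supplied by Proposition \ref{p:infl}; the lowest $K_2$-type of $\Theta(\pi_1)$, read off from the joint $K_1\times K_2$ harmonic decomposition of the oscillator representation $\Omega_{\C V}$ (this is where Proposition 1.4 of \cite{AB1} enters); and the Langlands classification for complex groups recalled in Section 2. The first two ingredients determine a unique irreducible Harish-Chandra module, and the third lets us write it as $L(\mu,\nu)$ and translate to $(\la_L,\la_R)=((\mu+\nu)/2,(-\mu+\nu)/2)$. The coordinate translations at the end are mechanical once the module is pinned down.

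\textbf{Spherical cases (1) and (2a).} The input $\pi_1=triv$ carries the trivial $K_1$-type, so its preimage in $\Omega_{\C V}$ is $K_1$-invariant, and the joint harmonic model together with Howe duality force the matched $K_2$-type to be the unique spherical one. A spherical module $L(0,\nu)$ with real infinitesimal character is determined by that character alone; plugging in $\tilde\la$ from Proposition \ref{p:infl} yields the displayed $\nu$, and the translation to $(\la_L,\la_R)$ gives the stated formulas.

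\textbf{Case (2b).} Here the same spherical argument applies, but first one must verify the occurrence criterion $n>2m$ when $n$ is odd in the pair $(Sp(2m,\bC),O(n,\bC))$. This is a parity/dimension computation inside $\Omega_{\C V}$: the $Sp(2m,\bC)$-invariants, tensored with the appropriate determinant character of $O(n,\bC)$, form a nonzero space precisely when $n>2m$. Granted existence, identification of $\Theta(triv)$ as the unique spherical module with the prescribed infinitesimal character proceeds exactly as in (2a), and produces the characteristic mixture of integer and half-integer coordinates after passing to $(\la_L,\la_R)$.

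\textbf{Case (3) and the main obstacle.} For $sgn$ the lift is nonspherical and the argument splits. When $n=m$, Proposition 1.4 of \cite{AB1} pairs the $sgn$ representation of $O(m,\bC)$ with the $U(n)$-type $(1,\dots,1)$, and this lowest $K_2$-type together with the infinitesimal character from Proposition \ref{p:infl} singles out the Langlands parameter in the displayed $(\mu,\nu)$ form. For $n>m$, an induction-in-stages identity factors $\Theta$ from $O(m,\bC)$ to $Sp(2n,\bC)$ through $\Theta_m$ to $Sp(2m,\bC)$ followed by parabolic induction from the Levi $GL(n-m,\bC)\times Sp(2m,\bC)$ twisted by $|\det|^{n+1}$; $\Theta(sgn)$ is then the unique irreducible constituent containing the lowest $K_2$-type of the induced module, and its Langlands parameter concatenates that of $\Theta_m(sgn)$ with the inducing character data. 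The main obstacle I expect is twofold: fixing the normalizing shift $|\det|^{n+1}$ as precisely the one produced by the correspondence, and verifying that the induced module has a unique subquotient with the required lowest $K_2$-type. Both require a careful composition-series analysis, and they are what pin down the extra coordinates $(n-m,\dots,m/2+1)$ appearing in the final parameter.
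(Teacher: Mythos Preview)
The paper does not supply a proof of this proposition; it is quoted from \cite{AB1} (Proposition 2.1 there) without argument, as are Proposition \ref{p:infl} and Theorem \ref{t:abmain}. So there is no ``paper's own proof'' against which to compare. That said, your outline is a faithful reconstruction of the strategy actually used in \cite{AB1}: the infinitesimal character is fixed by Proposition \ref{p:infl}, the lowest $K_2$-type is fixed by the explicit $K_1\times K_2$ decomposition of the harmonics (Proposition 1.4 of \cite{AB1}), and the Langlands classification for complex groups then pins down $L(\mu,\nu)$ uniquely. The induction-in-stages identity you invoke in (3b) is exactly how \cite{AB1} handles the case $n>m$, and the two points you flag as obstacles---the determinant shift $|\det|^{n+1}$ and uniqueness of the relevant subquotient---are precisely where the work in \cite{AB1} is concentrated.

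One small imprecision: in (2b) the phrase ``the $Sp(2m,\bC)$-invariants, tensored with the appropriate determinant character of $O(n,\bC)$'' is garbled. The occurrence criterion $n>2m$ for $n$ odd is not about tensoring invariants with a character; it is read off directly from the $K$-type correspondence in the harmonics, which for odd $n$ forces the trivial $K_1$-type to pair with a $K_2$-type that fails to exist unless $n>2m$. This does not affect the overall correctness of your plan.
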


\begin{theorem}[\cite{AB1} Theorem 2.8: Explicit dual pair correspondence (Type
  I)]
\label{t:abmain}
Fix $\tau=0,1$ and consider a family of dual pairs 
$(G_1(m),G_2(n))=(O(2m+\tau,\bb C),Sp(2n,\bb C))$. Fix $m$, let $G_1=G_1(m)$, and
let  $\pi_1=L(\mu_1,\nu_1)$ be an irreducible representation of $G_1$.

Define the integer $k=k[\mu_1]$ by writing $\mu_1=(\seq a
k,\seqzero;\epsilon)$ with $a_1\ge a_2\ge\dots\ge a_k>0$. 
Write $\nu_1=(\seq b m)$, and define the integer 
$0\le q=q[\mu_1,\nu_1]\le m-k$ to be the largest integer such that 
$2q-2+\tau,2q-4+\tau,\dots,\tau$ all occur (in any order) in 
$\{\pm b_{k+1},\pm b_{k+2},\dots,\pm b_m\}$. 
After possibly conjugating  by the stabilizer of $\mu_1$ in $W,$ we may
write  
$$
\begin{aligned}
\mu_1&=(
\overbrace{\seq a k}^k,\overbrace{\seqzero}^{m-q-k}
\overbrace{\seqzero}^q;\epsilon)\\
\nu_1&=( \overbrace{\seq b k}^k,
\overbrace{b_{k+1},\dots,b_{m-q}}^{m-q-k},
\overbrace{2q-2+\tau,2q-4+\tau,\dots,\tau}^q). 
\end{aligned}
$$
Let $\mu_1'=(\seq a k)$, $\nu_1'=(\seq b k)$, and 
$\nu_1''=(b_{k+1},\dots,b_{m-q})$.

Then for $n\ge n(\pi_1)=m-\epsilon q+\frac{1-\epsilon}2\tau$, 
$\Theta_n(\pi_1)=L(\mu_2,\nu_2)$, where
$$
\begin{aligned}
\mu_2&=
(\mu_1',\overbrace{1,\dots,1}^{\frac{1-\epsilon}2(2q+\tau)},
\seqzero)\\
\nu_2&=(\nu_1',
\overbrace{2q-1+\tau,2q-3+\tau,\dots,2\epsilon q+1+\epsilon\tau}
^{\frac{1-\epsilon}2(2q+\tau)},
\nu_1'',\\
&\qquad\qquad 2n-2m-\tau,2n-2m-2-\tau,\dots,
-\epsilon(2q+\tau)+2). 
\end{aligned}
$$
If $\la_L,\la_R$ are the parameter of $\pi_1,$ then the parameter of
$\Theta(\pi_1)$ is 
$$
\begin{pmatrix}
\la_L,q-\tau/2+1/2,\dots ,\ep q+\ep\tau/2+3/2,
n-m-\tau/2,\dots ,-\ep q-\ep\tau/2 +1\\
\la_R,q-\tau/2-1/2,\dots ,\ep q+\ep\tau/2+1/2,
n-m-\tau/2,\dots ,-\ep q-\ep\tau/2 +1
\end{pmatrix}
$$           
\end{theorem}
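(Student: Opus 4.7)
The plan is to prove this theorem by reducing to the basic cases of Proposition~\ref{p:basic} via an induction-in-stages argument, exploiting the compatibility of the $\Theta$-correspondence with parabolic induction along $GL$-factors (a see-saw argument).

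\textbf{Reduction to basic cases.} The way the parameter $(\mu_1,\nu_1)$ of $\pi_1$ has been organized into three blocks --- the ``top'' block $(\mu_1',\nu_1')$, the ``generic spherical'' block $(0,\nu_1'')$, and the ``special'' bottom block $\bigl(\underbrace{0,\ldots,0}_q\,;\,(2q-2+\tau,\ldots,\tau)\,;\,\epsilon\bigr)$ --- is designed so that $\pi_1$ may be realized as the Langlands subquotient of an induced representation
$$
\pi_1 \;=\; L\!\left(\Ind_{P_1}^{G_1}(\chi_1\otimes\sigma_1)\right),
$$
where $P_1\subset G_1$ is a parabolic with Levi $M_1\cong GL(m-q,\bC)\times O(2q+\tau,\bC)$; $\chi_1$ is the character of $GL(m-q,\bC)$ encoding the top and generic middle blocks; and $\sigma_1$ is the representation of $O(2q+\tau,\bC)$ whose Langlands parameter is the bottom block. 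The crucial point is that this bottom block is exactly one of the basic representations of Proposition~\ref{p:basic}: if $\epsilon=0$ it is $triv_{O(2q+\tau)}$, and if $\epsilon=1$ it is $sgn_{O(2q+\tau)}$, using that $2\rho_{O(2q+\tau)}=(2q-2+\tau,\ldots,\tau)$ and that the $\epsilon$-bit toggles between trivial and $\det$-twisted extensions in the parametrization of $\widehat{O(n)}$.

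\textbf{Induction principle and assembly.} The key tool is the standard identity
$$
\Theta_n\!\left(\Ind_{P_1}^{G_1}(\chi_1\otimes\sigma_1)\right)\;=\;\Ind_{P_2}^{G_2}\!\left(\chi_1\otimes\Theta_{n-(m-q)}(\sigma_1)\right),
$$
valid in the stable range, where $P_2\subset G_2=Sp(2n,\bC)$ has Levi $GL(m-q,\bC)\times Sp(2(n-m+q),\bC)$. This identity is proved by a see-saw for the pair $(G_1,G_2)$ against $(M_1,M_2)$ applied to the joint oscillator representation. The bound $n\ge n(\pi_1)=m-\epsilon q+\tfrac{1-\epsilon}{2}\tau$ is exactly what is required for the stable range condition to apply and for Proposition~\ref{p:basic} to give the Langlands parameter of $\Theta_{n-(m-q)}(\sigma_1)$ on $Sp(2(n-m+q),\bC)$: for $\epsilon=0$ this is case~(1) of Proposition~\ref{p:basic} (a spherical lift), and for $\epsilon=1$ it is case~(3) (the sign lift, contributing the $(1,\ldots,1)$ string). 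Concatenating the $\chi_1$-coordinates with those of $\Theta_{n-(m-q)}(\sigma_1)$ and applying a Weyl element of $W_{G_2}$ to put the result in standard Langlands form yields the formula in the theorem; the infinitesimal character check against Proposition~\ref{p:infl} is automatic from $\tilde\lambda=(n-m-\tau/2,\ldots,1-\tau/2)$.

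\textbf{Main obstacle.} The principal difficulty is to prove the induction principle in the precise form stated: that the Langlands quotient of $\Ind_{P_2}^{G_2}(\chi_1\otimes\Theta_{n-(m-q)}(\sigma_1))$ is irreducible and coincides with $\Theta_n(\pi_1)$. One direction --- that $\Theta_n(\pi_1)$ appears as a quotient of the induced module --- follows from the see-saw. The other direction --- irreducibility and matching of the lowest $K$-type --- requires Howe's theory of joint harmonics together with the lowest $K$-type pairing of Proposition~1.4 of \cite{AB1}, and an argument ruling out smaller Langlands data. The final bookkeeping step, especially converting between $(\mu,\nu)$ and $(\la_L,\la_R)$ coordinates, tracking the $\rho$-shift in parabolic induction, and allocating the ``middle 1-string'' in $\mu_2$ (which for $\epsilon=0$ comes from the $\rho$-shift combined with $\Theta(triv)$, and for $\epsilon=1$ is absorbed into the top block via the sign-lift formula of case~(3b)), is routine but demands careful case analysis in the boundary range $n(\pi_1)\le n<m+q+\tau$.
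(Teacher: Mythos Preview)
The paper does not contain a proof of this theorem at all: it is quoted verbatim as Theorem~2.8 of \cite{AB1}, just as Proposition~\ref{p:basic} is quoted as Proposition~2.1 of \cite{AB1}. So there is no ``paper's own proof'' to compare against; the result is imported wholesale and then applied in the proof of Theorem~\ref{t:corresp}.

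That said, your sketch is the correct strategy and is essentially the argument carried out in \cite{AB1}: write $\pi_1$ as the Langlands subquotient of an induced representation from a Levi $GL\times O(2q+\tau)$, identify the $O(2q+\tau)$-piece as $triv$ or $sgn$, invoke the compatibility of $\Theta$ with parabolic induction along $GL$-factors (the Kudla induction principle/see-saw), and apply the basic cases. Your check that $n(\pi_1)$ is exactly the threshold for the smaller lift $\Theta_{n-(m-q)}(\sigma_1)$ to exist (immediate for $triv$, and the condition in Proposition~\ref{p:basic}(3) for $sgn$) is right. Two small corrections: first, $\epsilon$ takes values $\pm 1$ in the formula (so that $(1-\epsilon)/2\in\{0,1\}$), not $0,1$ as you wrote---the paper itself is inconsistent about this, but the formulas in the theorem only make sense with $\epsilon=\pm1$, with $\epsilon=+1$ corresponding to $triv$ and $\epsilon=-1$ to $sgn$. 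Second, $\chi_1$ is not a character of $GL(m-q,\bC)$ but a principal series representation (or, equivalently, a character of its diagonal torus); the top block carries the nontrivial $\mu_1'$.
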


\noindent{\textbf Note}: The lowest K-type $\mu_1=(\seq a 
k,\seqzero;\epsilon)$ for $\pi_1$ is degree-lowest in $\pi_1$ if 
$\epsilon=+1.$ If $\epsilon=-1$ the degree-lowest K-type of 
$\pi_1$  is  
$$
(\seq a k,\obr{r}{{1,\dots,1}},\seqzero;\eta)
$$
where
$$
(r,\eta)= 
\begin{cases}
(2q+\tau,1) &\text{ if } 2q+\tau\le m-k,\\
(2(m-k)-2q,-1) &\text{ if } m-k<2q+\tau\le2(m-k)+\tau. 
\end{cases}
$$
\subsection{Main Result}\label{sec:maintheta} 
Restrict attention to the cases when the
nilpotent orbit $\CO$ has columns
\begin{description}
\item[(B)] $(m_0)(m_1,m_2)\dots (m_{2p-1},m_{2p})$ with
  $m_{2k}>m_{2k+1},$
\item[(C)] $(c_0,c_1)\dots (c_{2p-2},c_{2p-1})(c_{2p})$ with
  $c_{2j-1}>c_{2j},$
\item[(D)] $(m_0,m_{2p+1})(m_1,m_2)\dots (m_{2p-1},m_{2p})$ with
  $m_{2j}>m_{2j+1}$.
\end{description}
To each such nilpotent orbit we associate a sequence of dual pairs as follows.
Let $(V_k,\ep_k)$ be a symplectic space if $\ep_k=-1,$ orthogonal if
$\ep_k=1,$ $k=0,\dots ,2p.$  $\ep_0$ is the same as the type of the
Lie algebra, $\dim V_0$ is the sum of the columns. Let $(V_k,\ep_k)$
be the space with dimension  the sum of the lengths of the columns
labelled  $\ge k,$ and set $\ep_{k+1}=-\ep_k.$ Then 
\begin{equation*}
(V_k,V_{k+1})
\end{equation*}
gives rise to a dual pair. 
\begin{theorem}\label{t:corresp}
The unipotent
representations attached to $\CO_k$ are all $\Theta-$lifts of the
unipotent representations attached to $\CO_{k+1}.$
\medskip
More precisely, it is enough to describe the passage from $\CO_1$ to $\CO_0.$
\begin{itemize}
\item The infinitesimal character for $\CO_0$ is obtained from
  $\la_{\CO_1}$ by the procedure in proposition \ref{p:infl}; the
  resulting infinitesimal character is $\la_{\CO_0}.$
\item $\ep_0=-1$. There is a $1-1$ correspondence between unipotent
  representations of $Sp(V_{1})$ attached to $\CO_{1}$ and unipotent
  representations of $SO(V_0)$ attached to $\CO=\CO_0.$
\item $\ep_0=1$. There is a $1-1$ correspondence between unipotent
  representations of $O(V_{1})$ attached to $\CO_{1}$ and unipotent
  representations of $Sp(V_0)$ attached to $\CO=\CO_0.$
\end{itemize}
\end{theorem}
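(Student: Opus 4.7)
The approach I would take is direct computation, comparing the Langlands parameter formulas (\ref{eq:unipbep}), (\ref{eq:unipcep}) and their type D analogue against the $\Theta$-lift formula of Theorem \ref{t:abmain}. It suffices to treat the single step $\CO_1\rightsquigarrow\CO_0$.

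First I would unpack the combinatorics: stripping the outermost column of $\CO_0$ and regrouping yields a partition of the opposite type whose column pairing matches the defining data of $\CO_1$, and the dimensions of $V_0,V_1$ and the alternation of signs $\epsilon_k$ realize exactly the dual pairs of Theorem \ref{t:abmain}. Next I would verify the infinitesimal character. The formulas (\ref{eq:unipb}), (\ref{eq:c1})--(\ref{eq:c3}), (\ref{eq:unipd}) express $\la_\CO$ as a concatenation of column-indexed segments, so $\la_{\CO_0}=\la_{\CO_1}\cdot\tilde\la$ with $\tilde\la$ the segment contributed by the removed column; an arithmetic check identifies this $\tilde\la$ with the shift of Proposition \ref{p:infl} for the corresponding pair.

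The main content is the parameter matching. Given a unipotent $\pi_1$ attached to $\CO_1$ with sign vector $(\eta_1,\dots,\eta_p)$, I compute $\Theta_n(\pi_1)$ by reading off $k=k[\mu_1]$ and $q=q[\mu_1,\nu_1]$ directly from the explicit parameter (\ref{eq:unipbep}) or (\ref{eq:unipcep}) and substituting into Theorem \ref{t:abmain}. The output Langlands coordinates should split as the input parameter concatenated with a tail matching $\tilde\la$, which is precisely the formula for the $\CO_0$-unipotent with the same sign vector (extended by one new sign in the cases where the added column creates a new pair, with the new sign determined by the base case of Proposition \ref{p:basic} applied to $\CO_1=\{0\}$). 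Since the output has the Langlands parameter of a unipotent of $G(V_0)$ attached to $\CO_0$, it is automatically unitary. Bijectivity follows from the cardinality equality $|A(\CO_0)|=|A(\CO_1)|$, visible from the pair structure, together with the uniqueness of the $\Theta$-correspondence.

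The main obstacle is the parameter bookkeeping in the case $\epsilon=-1$ of Theorem \ref{t:abmain}: the Note following the theorem shows that the degree-lowest K-type of $\pi_1$ shifts by a block of ones of length $r=2q+\tau$ or $r=2(m-k)-2q$ depending on the comparison $2q+\tau\lessgtr m-k$. These two subcases mirror the two possible forms of the new column's contribution on the $\CO_0$-side, so the verification is a matched case analysis. A secondary subtlety arises in type D when all $m'_{2j}=m'_{2j+1}$: two nilpotent orbits share a partition and must be distinguished through the $SO$ versus $O$ and $\mathrm{sgn}$-twist as discussed in section \ref{sec:d}, which requires a separate check along the lines of Proposition \ref{p:basic}(1)--(2).
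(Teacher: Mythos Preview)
Your approach is the same as the paper's: a direct parameter computation via Theorem~\ref{t:abmain}, organized by the type of $\CO_1$ and the type of $\CO_0$. The infinitesimal-character check and the overall strategy are fine. Two points need correction.

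\textbf{The source of the extra sign.} In the direction $O(V_1)\to Sp(V_0)$ the column pairing of $\CO_0$ acquires one new pair, so the unipotent parameters for $\CO_0$ carry one more sign $\eta_0$ than those for $\CO_1$. You write that this new sign is ``determined by the base case of Proposition~\ref{p:basic} applied to $\CO_1=\{0\}$''. That is not how it arises. The extra sign is produced by the $\epsilon$ in the $O$-parameter of $\pi_1$ itself, i.e.\ by whether $\pi_1$ is an $SO$-representation or its $sgn$-twist. Theorem~\ref{t:abmain} treats the cases $\epsilon=+1$ and $\epsilon=-1$ differently (look at the block of $1$'s of length $\frac{1-\epsilon}{2}(2q+\tau)$ in $\mu_2$), and the paper's proof correspondingly splits into the ``lowest $K$-type with a $+$'' and ``lowest $K$-type with a $-$'' subcases; these yield $\eta_0=+1$ and $\eta_0=-1$ respectively. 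This is exactly why the theorem is stated for $O(V_1)$ and not $SO(V_1)$: the two $\epsilon$-values double the source count to match $|A_{Sp}(\CO_0)|$. Your cardinality remark $|A(\CO_0)|=|A(\CO_1)|$ is therefore only correct once you read $A(\CO_1)$ on the $O$-side, and your invocation of ``uniqueness of the $\Theta$-correspondence'' is unnecessary---bijectivity comes from the explicit matching of sign vectors $(\eta_1,\dots,\eta_p;\epsilon)\leftrightarrow(\eta_0,\eta_1,\dots,\eta_p)$.

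\textbf{The type D worry is moot.} The very-even case (all $m'_{2j}=m'_{2j+1}$) is excluded by the standing hypotheses $m_{2j}>m_{2j+1}$ at the start of Section~\ref{sec:maintheta}, so no separate check is needed there. The genuine case split you need is the one above, governed by $\epsilon$, together with the minor bookkeeping of whether the trailing column $c_{2p}$ (type~C) is zero or not when passing $C\to B$.
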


\begin{proof}\ 
The relation between $\CO_1$ and $\CO_0$ is that one adds a column
longer than the longest column of $\CO_1.$ This adds one to the
existing rows of $\CO_1$ and adds some rows of size 1. When passing
from $sp(*)$ to $so(*),$ the component group acquires another $\bZ_2.$ 
When passing from $so(*)$ to $sp(*),$ the component group does not change.

If $\CO_1$ is type C then $\CO_0$ is type B, and we add a column $m_0$
which must be of odd length. The infinitesimal character is augmented by
$(m_0/2,\dots ,1/2)$ conforming to \ref{p:infl}. There are two cases:
\begin{enumerate}
\item $c_{2p}=0$. In this case $c_0\to m_1,\dots ,c_{2p-1}\to
  m_{2p}$. So the pairing of the columns of $\CO_0$ matches
  $(m_0)(c_0,c_1)\dots   (c_{2p-2},c_{2p-1})$ and $\Theta$ gives a 1-1
  correspondence between parameters for $\CO_1$ and $\CO_0$. 
\item $c_{2p}\ne 0.$ In this case, $c_{2p}$ is even. Again $c_0\to
  m_1,\dots, c_{2p-1}\to m_{2p},$ but $c_{2p}\to m_{2p+1}$ and we have
  to add $m_{2p+2}=0.$ The pairing of columns for $\CO_0$ is
  $(m_0)(c_1,c_2)\dots (c_{2p-2},c_{2p-1})(c_{2p},0).$ Since
  $c_{2p}>0$ is even, the last pair does not contribute any unipotent
  representations. 
\end{enumerate}
In both cases 
$$
(\eta_1,\dots ,\eta_p)\longleftrightarrow (\eta_1,\dots ,\eta_p).
$$ 
If the pair is  from type $C$ to type $D$, a column $m_0$ is added,
and the infinitesimal character matches Proposition \ref{p:infl}.
$c_0,\dots ,c_{2p}$ are changed to $m_1,\dots ,m_{2p+1}$ 
The pairing of the columns of $\CO_0$ is  
$$
(m_0,c_{2p})(c_1,c_2)\dots (c_{2p-2},c_{2p-1}).
$$ 
A parameter corresponding to a $(\eta_1,\dots ,\eta_p)$ goes to the
corresponding one with $(\eta_1,\dots ,\eta_p)$ for type $D$. 

The correspondence for parameters of type $B,D$ with type $C$ when the
lowest K-type is with a $+$ is analogous to type $C$ to type $B,D$
above. The cases when the lowest $K-$type is with a $-$ are as
follows. In all cases the infinitesimal characters conform to
proposition \ref{p:infl}.

For type $B$ to type $C,$ an odd column $c_0$ larger than $m_0$ is added,
and $m_0\to c_1,\dots , m_{2p}\to c_{2p+1},$ and we must add a
$c_{2p+2}=0.$ The pairing of the columns is
$$
(c_0,m_0)(m_1,m_2)\dots (m_{2p-1},m_{2p})(0).
$$
Theorem \ref{t:abmain} implies that the $\Theta-$lift of the 
parameter for $\CO_1$ corresponding to $(\eta_1,\dots ,\eta_{p})$ goes
to the parameter $\eta_0=-1,\eta_1,\dots ,\eta_p)$ for $\CO_0.$ The
parameters with $\eta_0=1$ are $\Theta-$lifts of the paramters of
$\CO_1$ with $K-$types with a $+$.

For $\CO_1$ of type $D$ to $\CO_0$ of type $C,$ an even  column $c_0$
larger than $m_0$ is 
added, and $m_0\to c_1,\dots ,m_{2p+1}\to c_{2p+2}$. The columns of the
ensuing $\CO_0$ are paired
$$
(c_0,m_0)(m_1,m_2)\dots (m_{2p-1},m_{2p})(m_{2p+1})
$$
Theorem \ref{t:abmain} implies that the $\Theta-$lift of the 
parameter for $\CO_1$ corresponding to $(\eta_1,\dots ,\eta_{p})$ goes
to the parameter $(\eta_0=-1,\eta_1,\dots ,\eta_p)$ for $\CO_0.$ The
parameters with $\eta_0=1$ are $\Theta-$lifts of the paramters of
$\CO_1$ with $K-$types with a $+$.

\end{proof}
We abbreviate $Sp(2n), O(m)$ for $Sp(2n,\bb C), O(m,\bb C)$ and
similarly for the Lie algebras. 
\begin{example}
Consider the nilpotent orbit in $so(8)$ with columns 
$\CO\longleftrightarrow (4,3,1)$. The infinitesimal character is
$(1,0,3/2,1/2).$ Then $(V_0,1)$ is of dimension 8, and $(V_1,-1)$ is of
dimension 4. $\CO_1\longleftrightarrow (3,1)$ and the unipotent
representations are the two oscillator representations 
\begin{equation*}
  \begin{pmatrix}
    3/2&1/2\\3/2&1/2
  \end{pmatrix}
\qquad   \begin{pmatrix}
    3/2&1/2\\3/2&-1/2
  \end{pmatrix}
\end{equation*}
They correspond to the two unipotent representations of $SO(8)$ with
parameters
\begin{equation*}
  \begin{pmatrix}
    1&0&3/2&1/2\\ 1&0&3/2&1/2
  \end{pmatrix}
\qquad   \begin{pmatrix}
    1&0&3/2&1/2\\ 1&0&3/2&-1/2
  \end{pmatrix}  
\end{equation*}
\end{example}
\begin{example}
Let $\CO_1\longleftrightarrow (4,2,2)$ in $so(8)$. It matches
$\CO_0\longleftrightarrow (4,4,2,2)$ in $sp(12)$. The infinitesimal
characters are $(1,1,0,0)$ and $(2,1,1,1,0,0).$ The parameters for
$\CO_1$ are
$$
\begin{pmatrix}
  1&0&1&0\\1&0&1&0
\end{pmatrix}
\qquad
\begin{pmatrix}
  0&-1&1&0\\1&0&1&0
\end{pmatrix}
$$
The parameters for $\CO$ are
$$
\begin{aligned}
&\begin{pmatrix}
  2&1&0&-1&1&0\\ 2&1&0&-1&1&0
\end{pmatrix}
&&\begin{pmatrix}
  2&1&0&-1&0&-1\\2&1&0&-1&1&0
\end{pmatrix}\\
&\begin{pmatrix}
  1&0&-1&-2&1&0\\2&1&0&-1&1&0
\end{pmatrix}&&
\begin{pmatrix}
  1&0&-1&-2&0&-1\\2&1&0&-1&1&0
\end{pmatrix}
\end{aligned}
$$
The second column is obtained by applying the correspondence to the
parameters for $O(8)$ tensored with $sgn.$ 
\end{example}

\begin{example}
Let $\CO\longleftrightarrow (33)$ in $sp(6).$ Then
$\CO_1\longleftrightarrow (3)$ in $so(3).$ The infinitesimal
characters are $(3/2,1/2,1/2)$ and $(1/2)$ as given by the previous
algorithms.

\bigskip
The rows of $\CO$ are $(2,2,2)$ there is only one special unipotent
representation, its infinitesimal character is $(1,1,0)$. By contrast
infinitesimal character $(3/2,1/2,1/2)$ matches the
$\Theta-$correspondence and there are two parameters. 
\end{example}
\begin{example}
Let $\CO\longleftrightarrow (4,2,2)$ in $sp(8).$ It corresponds to\newline
$\CO\longleftrightarrow (2,2)$ in $so(4).$ There are two such
nilpotent orbits if we use $SO(4),$ one if we use $O(4).$ We will use
orbits of the orthogonal group. The infinitesimal character
corresponding to $(2,2)$ is $(1/2,1/2)$. The representations
corresponding to $(4,2,2)$ have infinitesimal character
$(1,0,1/2,1/2).$  The Langlands parameters are spherical
$$
\begin{pmatrix}
  1/2&1/2\\1/2&1/2
\end{pmatrix}
\longleftrightarrow 
\begin{pmatrix}
  1&0&1/2&1/2\\1&0&1/2&1/2
\end{pmatrix}
$$

\bigskip
{\rm
We can go
further and match $(2,2)$ in $so(8)$ with $(2)$ in $sp(2).$ If we
combine these steps we get infinitesimal characters $(1)\mapsto
(0,1)\mapsto (2,1,0,1).$ There is
nothing wrong with the correspondence of irreducible modules. But note
that the infinitesimal character $(2,1,1,0)$ has maximal
primitive ideal corresponding to the orbit $\CO\longleftrightarrow
(4,4)$, (rows $(2,2,2,2)$).

{This is one of the reasons for imposing the conditions on the
  nilpotent orbits, we want to be able to iterate and stay within the class of
  unipotent representations.} One obtains 
induced modules with interesting composition series. In this example,
let $P$ be the parabolic subgroup with Levi component $GL(2)\times
Sp(4)$ and $\chi$ be a character on $GL(2)$ so that the induced module
$\Ind_P^{Sp(8)}[\chi\otimes Triv]$ has infinitensimal character
$\la_\CO,\la_\CO)$ with $\la_\CO=(2,1,1,0).$ Then  
\begin{equation*}
  \begin{aligned}
&\Ind_{GL(2)xSp(4)}^{Sp(8)}[\chi\otimes Triv]=\\
&\begin{pmatrix}
  2&1&0&-1\\
  2&1&2&-1
\end{pmatrix}+
\begin{pmatrix}
  2&1&0&-1\\
  1&0&-1&-2
\end{pmatrix}+
\begin{pmatrix}
  1&0&2&1\\
  0&-1&2&1
\end{pmatrix}.    
  \end{aligned}
\end{equation*}
The first two parameters are unipotent, corresponding to
$\CO\longleftrightarrow (4,4).$ The last factor is bigger, the
annihilator corresponds to the nilpotent orbit $(4,2,2)$.  
All these composition factors have nice character formulas
analogous to those for the special unipotent representations even though their
annihilators are no longer maximal. Daniel Wong has made an extensive
study of these representations in his thesis. 

{This example is tied up with the fact that
  nilpotent orbits are not always normal}.
A nilpotent orbit is normal if and only if $R(\CO)=R(\ovl{\CO}).$ 
The orbit  $(4,2,2)$ is \textbf{not normal}. 
$R(\CO)$ is the full induced representation from a \newline 1-dimensional
representation of $\fm=gl(2)\times sp(4)$.
$R(\ovl{\CO})$ is the sum of the first and last representation, 
missing the middle one. 
{ 
These equalities are in the sense that the $K-$types of the
  representations match the $G-$types of the regular functions, using the
  identification $K_\bb C\cong G$. 
It is \textbf{not} the case that $R(\CO)$ and $R(\ovl\CO)$ are
  representations of $G$ as a real Lie group}.  
}    
\qed
\end{example}

\section{Regular Functions on Nilpotent Orbits 
and Unipotent  Representations} \label{sec:5}

\vh
\subsection{Background Results}\label{S:1} 
Most of the details in this section can be found in 
\cite{McG}, \cite{G}, 
and references therein.
 
The structure sheaf of a variety $Z$ will be denoted by $\CS_Z.$ We
will abbreviate $R(Z)$ for $\Gamma(Z,\CS_Z).$ 

\vh
Typically $\CO$ will denote the orbit of a nilpotent element $e$ in a
reductive Lie algebra $\fg.$  The orbit is isomorphic to $G/G(e).$
Its universal cover $\wti\CO$ is isomorphic to $G/G(e)_0.$ By one of
Chevalley's theorems there is a representation  $\tilde V$ and a
vector $\tilde e=(e,\tilde v)\in \fg\oplus \tilde V$ such that its
orbit under $G$ is the universal cover; in other words the stabilizer
of $\tilde v$ is $G(e)_0$.  Given any closed subgroup $G(e)_0\subset
H\subset G(e),$ there is a corresponding cover $\wti\CO_H$ which can be
realized as the orbit of $G$ of an element $e_H=(e,v_H)\in\fg\oplus V_H.$  

\vh
\subsection{}\label{S:2}
Let $\{e,h,f\}$ be a Lie triple associated to $e.$ Let $\fg_{\ge 2}$
be the sum of the eigenvectors of $\ad h$ with eigenvalue greater than
or equal to $2.$ Let $P(e)$ be the parabolic subgroup determined by
$h,$ \ie the parabolic subgroup corresponding to the roots with
eigenvalue greater than or equal to zero for $\ad h.$ It is well known
that the natural map
\begin{equation}\label{5.1}
m_e : G\times_{P(e)} \fg_{\ge 2} \longrightarrow \ovr \CO,\qquad
(g,X)\mapsto gXg^{-1}
\end{equation}
is birational and projective. The birationality follows from
\cite{BV}. The projective property is in \cite{McG}. 

\vh
\subsection{}\label{S:3}
The notions and results in the next sections are for $G$ the rational
points of a reductive group over an algebraically closed field of 
characteristic 0. 

\medskip
Let $P=MN$ be an arbitrary parabolic subgroup and $\CO_\fm\subset \fm$
be a nilpotent orbit. A $G$-orbit $\CO$ is called {\it induced }
from $\CO_\fm$ (\cite{LS}), if
\begin{equation}\label{5.2}
\CO\cap [\CO_\fm + \fn]\quad \text{ is dense in }\quad \CO_\fm +\fn.
\end{equation}
Let $\Sigma:=\CO_\fm +\fn.$ There is a similar {\it moment map}
\begin{equation}\label{5.3}
m:G\times_P \Sigma \longrightarrow \ovr\CO,\qquad (g,X)\mapsto gXg^{-1}.
\end{equation}
It is projective for the same reason as before, but it is not always
birational. Precisely, if $e\in\Sigma\cap\CO,$ then
the generic fiber of $m$ is isomorphic to $G(e)/P(e).$ 

We will write $\CZ$ for $G\times_P\Sigma$ where $\Sigma=\CO_{\fm}+\fn.$ 
In general, write $A_G(e):=G(e)/G(e)_0$. We suppress the subscript $G$ if it is
clear from the context. Recall from \cite{LS} that $G(e)_0=P(e)_0,$
so that there is an inclusion  $A_P(e)\subset A_G(e).$ If $e_\fm\in
\CO_\fm$, then there is a surjection $A_P(e)\to A_M(e_\fm)$. Given a
representation $\phi$ of $A_M(e_\fm),$ we will denote by the same letter
$\phi$ its inflation to $A_P(e).$ 

\vh
\subsection{}\label{S:4}
Given a (cover of) an orbit $\C O\cong G/G(e),$ recall from \cite{J}
section 8.1 that
\[
R({\C O})=\Ind_{G(e)}^G[\Triv]\qquad \text{ (algebraic induction). }
\]
\begin{definition}
Let $\Psi\in \wht{G(e)}$ be trivial on $G(e)_0$, and write 
$$
R(\C O)_\Psi=\Ind_{G(e)}^G[\Psi].
$$    
\end{definition}
Regular
functions on the universal cover $\wti{\CO}$ satisfy
\[
R(\wti{\C O})=\sum_{\Psi\in\wht{G(e)_0} }\Ind_{G(e)}^G\Ind_{G(e)_0}^{G(e)}[\Psi].
\]

\subsection{}\label{S:5}
Let $e_\fm\in\CO_\fm$ and
$\la\in\fm$ be semisimple such that $C_\fk g(\la)=\fm,$ and $\fk n$ is
spanned by the root vectors of roots positive on $\la.$ 
Let $e\in e_\fm+\fn$ be a representative for the induced nilpotent. Let
$\psi$ be a representation of $A_M(e_\fk m)$ (equivalent to the inflated
representation on $A_P(e)$) and $\Psi$ be the induced 
representation to $A_G(e)$. Choose a ($K-$invariant) inner product on $\fg.$
By Frobenius reciprocity, 
$$
R(\CO)_\Psi:=\sum_{\rho\in \wht{A(\CO)}} [\rho\mid_{A_M(e_\fk
  m)}:\psi]R(\CO)_\rho.
$$
\begin{proposition}\label{p:5.5} Let $(\mu,V)$ be a representation of $G.$ Then
$$
[\mu:Ind_M^G[R(\CO_\fm)_\psi]]\le [\mu: R(\CO)_\Psi].
$$
\end{proposition}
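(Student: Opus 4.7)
The plan is to unpack both sides of the inequality via Frobenius reciprocity for algebraic induction, reducing the claim to a comparison of $\Hom$-spaces over two closed subgroups of $P$, and then to exploit the subgroup inclusion $P(e)\subseteq M(e_\fm)N$.

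For the left-hand side I would view $\Ind_M^G$ as $\Ind_P^G$ applied to the inflation of $R(\CO_\fm)_\psi$ from $M$, on which $N$ acts trivially. Frobenius reciprocity gives $\dim\Hom_P(\mu,R(\CO_\fm)_\psi)$; since $N$ acts trivially on the target, any such $P$-equivariant map kills $\fn\cdot\mu$ and thus factors through the $N$-coinvariants $\mu_N$ as an $M$-map. Substituting $R(\CO_\fm)_\psi=\Ind_{M(e_\fm)}^M\psi$ from Section~\ref{S:4} and applying Frobenius once more gives
\[
[\mu:\Ind_M^G R(\CO_\fm)_\psi]=\dim\Hom_{M(e_\fm)N}(\mu,\psi).
\]
For the right-hand side I would combine $\Psi=\Ind_{A_P(e)}^{A(\CO)}\psi$ with $G(e)_0=P(e)_0$ and induction in stages to rewrite
\[
R(\CO)_\Psi=\Ind_{G(e)}^G\Psi=\Ind_{P(e)}^G\psi,
\]
where $\psi$ is inflated through $P(e)\to A_P(e)\to A_M(e_\fm)$; Frobenius then yields $[\mu:R(\CO)_\Psi]=\dim\Hom_{P(e)}(\mu,\psi)$.

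The proof then reduces to verifying $P(e)\subseteq M(e_\fm)N$. Writing $p=mn\in P(e)$ with $m\in M,\ n\in N$ and $e=e_\fm+\tilde e$ with $\tilde e\in\fn$, the relation $[\fn,\fp]\subseteq\fn$ forces $\Ad(n)e\in e_\fm+\fn$, so the equation $\Ad(p)e=e$ projects onto $\Ad(m)e_\fm=e_\fm$ in $\fm$, and therefore $m\in M(e_\fm)$. With this inclusion in hand, every $M(e_\fm)N$-equivariant map $\mu\to\psi$ is automatically $P(e)$-equivariant, which gives $\Hom_{M(e_\fm)N}(\mu,\psi)\subseteq\Hom_{P(e)}(\mu,\psi)$ and the desired inequality of multiplicities. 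The one delicate point that I expect to require care is verifying that the two inflations of $\psi$ (to $M(e_\fm)N$ and to $P(e)$) are compatible under the surjection $A_P(e)\to A_M(e_\fm)$ recalled in Section~\ref{S:3}, so that restriction of homs from the larger group to the smaller is genuinely well-defined and injective on the nose.
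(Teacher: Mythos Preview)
Your reduction of the right-hand side to $\dim\Hom_{P(e)}(\mu,\psi)$ via Frobenius reciprocity and induction in stages is correct. The gap is on the left-hand side. The induction $\Ind_M^G$ here is the $K$-structure of Harish-Chandra parabolic induction, which for a complex group (using $G=KP$ and $K\cap P=K\cap M$) is algebraic induction from the \emph{reductive} subgroup $M$, not algebraic induction from $P$. Frobenius reciprocity therefore gives
\[
[\mu:\Ind_M^G R(\CO_\fm)_\psi]=\dim\Hom_{M(e_\fm)}(\mu,\psi),
\]
with no $N$ appearing. Your step ``any such $P$-equivariant map kills $\fn\cdot\mu$ and factors through $\mu_N$'' inserts an $N$-coinvariant that does not belong, and the quantity you obtain, $\dim\Hom_{M(e_\fm)N}(\mu,\psi)$, is in general strictly smaller than the true left-hand side. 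Already in the Richardson case $\CO_\fm=0$ one has $M(e_\fm)=M$, and $\dim\mu^M\ne\dim\mu^P$ for most $\mu$; compare with Proposition~\ref{p:induced}, where $\Ind_P^G[triv]$ has $K$-spectrum $R(G/M)$, not $R(G/P)=\bC$.

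Once the spurious $N$ is removed, your strategy collapses: you would need $\dim\Hom_{M(e_\fm)}(\mu,\psi)\le\dim\Hom_{P(e)}(\mu,\psi)$, but there is no inclusion $P(e)\subseteq M(e_\fm)$, since the unipotent radical of $G(e)$ sits in $N$, not in $M$. This is precisely why the paper uses a deformation argument. The element $\tfrac{1}{n}\la+e_\fm$ has centralizer $M(e_\fm)$ (Jordan decomposition), and after conjugating by $p_n\in K\cap P$ to $\la_n=\tfrac{1}{n}\la+e$ one gets a family of subalgebras $C_\fg(\la_n)$, all of dimension $\dim\fg(e)$, degenerating into $\fg(e)$ as $n\to\infty$. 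Orthonormal bases of $\psi$-isotypic vectors for the deformed centralizers then converge to linearly independent vectors that are $\Psi$-isotypic for $G(e)$. The essential point your argument misses is that $M(e_\fm)$ and $G(e)$ are related by a degeneration inside $\fg$, not by a subgroup inclusion.
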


\begin{proof} We work with $G(e)$ and $M(e_\fk m)$ with $\psi$ and
  $\psi$ trivial on the connected component of the identity so in
  particular also trivial on the corresponding unipotent radicals.  
For $n\in\bN,$ consider $\frac{1}{n}\la +e_\fm.$   
There is $p_n\in K\cap P$ such that $\la_n:=Ad(p_n)(\frac{1}{n}\la
+e_\fm)=\frac{1}{n}\la +e.$ The centralizer of $\la-n$ has the same
dimension as the centralizer of $e.$ We show that for every $(\mu,V)$
linearly independent vectors transforming under $M(e_\fk m)$ according
to $\psi$ give rise to vectors transforming according to $\Psi$ under
$G(e).$ 

\medskip
For each $n,$ let $X_n^1,\dots, X_n^k$ be
an orthonormal basis of $C_{\fk g}(\la_n),$ the centralizer in $\fg$ of
$\la_n.$ We can extract a  subsequence such that the $X_n^i$ all
converge to an orthonormal basis of $C_{\fk g}(e).$ 
Now let $v_1^n,\dots , v_l^n$ be an orthonormal basis of the space of
fixed vectors of $C_{\fk g}(\la_n)$ in $V.$ We can again extract a subsequence
such that the $v_n^j$ all converge to an orthonormal set of vectors in
$V.$ Because $v_n^j$ are invariant under the action of the $X_n^i,$
their limits are invariant under an orthonormal basis of $C_{\fk g}(e).$ Using
Frobenius reciprocity, this proves the claim for the connected
components of the centralizers, \ie the corresponding statement for
$R(\wti\CO_\fm)$ and $R(\wti\CO).$  The proof of the general case is a
straightforward modification, using hte fact that $A_G(e)$ and
$A_M(e_\fk m)$ are finite groups.
\end{proof}

\subsection{}\label{S:6}
For the case of a Richardson nilpotent orbit, the previous result can
be sharpened as follows. The details are in \cite{J} chapter 8.
Let $P=MN$ be a parabolic subgroup with Lie algebra $\fk p=\fk m +\fk
n.$ Denote again by $\la\in\fg$  a semisimple  element whose centralizer is
$\fk m,$ and which is positive on the roots of $\fk n.$ Let $e\in \fk
n$ be a representative of the Richardson induced orbit from this
parabolic subalgebra, and denote its $G$ orbit by $\CO.$ As before,
there is a map
\begin{equation}
  \label{eq:1.1}
m:  \C P_\fk n:=G\times_P\fk n\longrightarrow \fg,
\end{equation}
with image $\ovl{\CO}.$ Let $\wti{\CO}$ be the inverse image of
$\CO$. By lemma 8.8 in \cite{J}, $\wti{\C O}$ is a single $G-$orbit,
and an open dense subset of $\C P_\fk n.$ In addition $\wti{\C O}$ is
an unramified cover of $\C O$  with fiber 
$A_G(\C O)/A_P(\C O).$ 
Identify representations of $A_P(e)$ and $A_G(e)$ with representations
of $G(e)$ by making them trivial on $G(e)_0.$ 
\begin{proposition}\label{p:induced}
$$
[\mu: Ind_P^G[triv] ] = \sum_{\rho\in\widehat{ A_G(e)}}
[\rho|_{A_P(e)}:triv][\mu:R(\CO)_\rho]. 
$$
\end{proposition}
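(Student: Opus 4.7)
The plan is to match both sides of the equality with the $\mu$-isotypic multiplicity of $R(\wti\CO)$. First I identify $\wti\CO$ as a homogeneous space. Since $G(e)_0 = P(e)_0$ by Section~\ref{S:3}, the isotropy $P(e) = P\cap G(e)$ sits between $G(e)_0$ and $G(e)$ with $P(e)/G(e)_0 = A_P(e)\subset A_G(e)$, so $G/P(e)$ is a cover of $\CO = G/G(e)$ with fiber $A_G(e)/A_P(e)$ and matches the cover $\wti\CO$ described in Section~\ref{S:6}. Algebraic induction (Section~\ref{S:4}) and induction in stages then give
\[
R(\wti\CO) \;=\; \Ind_{P(e)}^G[\mathrm{triv}] \;=\; \Ind_{G(e)}^G\!\bigl[\Ind_{P(e)}^{G(e)}[\mathrm{triv}]\bigr].
\]
Identifying representations of $G(e)$ trivial on $G(e)_0$ with representations of $A_G(e)$ (and similarly for $P(e)$ and $A_P(e)$), the inner induction becomes induction between the finite groups $A_P(e)\hookrightarrow A_G(e)$; Frobenius reciprocity expands it as $\bigoplus_\rho [\rho|_{A_P(e)}:\mathrm{triv}]\,\rho$, so
\[
R(\wti\CO) \;=\; \bigoplus_{\rho\in\widehat{A_G(e)}} [\rho|_{A_P(e)}:\mathrm{triv}]\,R(\CO)_\rho,
\]
and the $\mu$-multiplicity of $R(\wti\CO)$ is exactly the right-hand side of the proposition.

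Next I match the left-hand side with $[\mu:R(\wti\CO)]$ using the Richardson resolution $m:\CP_\fn = G\times_P\fn\to\ovr\CO$ of Section~\ref{S:6}. As the total space of a $G$-equivariant vector bundle over $G/P$,
\[
R(\CP_\fn) \;=\; \Ind_P^G[R(\fn)] \;=\; \Ind_P^G[S(\fn^*)],
\]
which under the conventions in use (compare Section~8 of \cite{J}) is the module the paper writes $\Ind_P^G[\mathrm{triv}]$. The Stein factorization $\CP_\fn\to\wti{\ovr\CO}:=\mathrm{Spec}(m_*\CS_{\CP_\fn})\to\ovr\CO$ of the projective map $m$ passes through a normal variety finite over $\ovr\CO$ of degree $|A_G(e)/A_P(e)|$ whose restriction over $\CO$ is $\wti\CO$; hence $R(\CP_\fn) = R(\wti{\ovr\CO})$. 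The complement of $\wti\CO$ in $\wti{\ovr\CO}$ lies over $\ovr\CO\setminus\CO$, a union of nilpotent orbits of codimension $\ge 2$; since $\wti{\ovr\CO}\to\ovr\CO$ is finite this complement has codimension $\ge 2$ in the normal variety $\wti{\ovr\CO}$, and Hartogs gives $R(\wti{\ovr\CO}) = R(\wti\CO)$.

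The hard part is the codimension-two estimate: one must verify that no strictly smaller $G$-orbit in $\ovr\CO$ pulls back to a divisor in $\wti{\ovr\CO}$. Via the finite map $\wti{\ovr\CO}\to\ovr\CO$ this reduces to the standard fact that nilpotent orbits in a complex reductive Lie algebra have even (hence at least two) codimension in the closure of any strictly larger nilpotent orbit. With this in hand, everything else — the homogeneous-space identification $\wti\CO\cong G/P(e)$, induction in stages, Frobenius reciprocity on the finite groups $A_P(e)\subset A_G(e)$, and the Stein factorization — is formal.
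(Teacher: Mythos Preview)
Your argument correctly identifies the right-hand side with $[\mu:R(\wti\CO)]$ via Frobenius reciprocity on the finite groups $A_P(e)\subset A_G(e)$, and your Stein-factorization/Hartogs argument for $R(\CP_\fn)=R(\wti\CO)$ is essentially the content of the Jantzen reference the paper also cites.  The gap is in the sentence where you assert that $R(\CP_\fn)=\Ind_P^G[S(\fn^*)]$ ``under the conventions in use \ldots\ is the module the paper writes $\Ind_P^G[\mathrm{triv}]$.''  This is not a convention; it is the heart of the proposition.  In this paper $\Ind_P^G[\mathrm{triv}]$ is the Harish--Chandra parabolically induced $(\fg,K)$-module (it is used exactly this way in the proof of Theorem~\ref{t:5.10}, where it decomposes as $\sum X_\chi$), and its $K$-structure is $R(K/K\cap M)\cong R(G/M)$, with multiplicity $\dim V_\mu^M$.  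By contrast $R(\CP_\fn)=H^0(G/P,S(\fn^*))$; the $P$-action on $\fn$ is by $\Ad$, which does \emph{not} match the action coming from $P/M\cong N$, so these two objects are not identified by any formal manipulation.

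What bridges them is exactly what the paper invokes: the Euler characteristic $\sum_i(-1)^iH^i(G/P,S^n(\fn^*))$ depends only on the $M$-module $S^n(\fn^*)|_M$ (this is the ``standard relation to $\fn$-cohomology''), hence agrees with $R(G/M)$; and then the vanishing theorem $H^i(G/P,S^n(\fn^*))=0$ for $i>0$ (Jantzen, Theorem~8.15) collapses the Euler characteristic to $H^0$.  Your write-up does not use the vanishing theorem anywhere, and your closing remark that ``the hard part is the codimension-two estimate'' misidentifies the difficulty: the codimension estimate is an elementary fact about nilpotent orbits, whereas the cohomology vanishing is the substantive input you have omitted.
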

\begin{proof} By formula (4) in section 8.9 of \cite{J},
\[
R[\wti{\C O}]\cong R[\C P_\fk n]\cong \bigoplus_n H^0[G/P,S^n(\fk n^*)],
\] 
where $\C P_\fk n:=G\times_P\fk n.$ Theorem 8.15 in \cite{J} says that
\[
H^i[G/P,S^n(\fk n^*)]=0\qquad \text{ for all } i>0,\  n\in\bN.
\] 
The final formula follows by the standard relations between $H^i(G/P,V)$
and $\fk n-$cohomology. 
\end{proof}
We will use this proposition in the setting of a triangular nilpotent
orbit in a classical type Lie algebra, and $P$ such that  $A_P(e)=\{1\}.$

\subsection{}\label{S:7}
We return to the case where $P$ corresponds to the middle element
of the Lie triple. Write $A(\C O):=A_G(e).$ In this case,
$G(e)\subset P.$
Recall $\Sigma:=\CO_\fk m +\fk n$ and $\C Z:=G\times_P\Sigma.$ 
Let $\chi\in\widehat{A(\CO)}$ be an irreducible representation viewed as a
representation of $G(e)$ trivial on $G(e)_0,$ and assume there is a
representation $\xi$ of $P$ such that $\xi |_{G(e)}=\chi.$ Then 
\begin{equation}\label{5.9}
H^0(G/P,R(P\cdot e)\otimes \bC_\xi)\subset R(\CO,\CS_\chi)
\end{equation}
because $\CO$ embeds in $\CZ$ via $g\cdot e \mapsto [g,e].$ The results
in \cite{McG} imply that there is equality. Indeed, if $\phi\in
R(\CO,\CS_\chi),$ view it as a map $\phi:G\longrightarrow \bC$
satisfying
\begin{equation}
  \label{eq:5.10}
  \phi(gx)=\chi(x^{-1})\phi(g).
\end{equation}
 Then define a section $s_\phi\in H^0(G/P,R(P\cdot e))$ by the formula
 \begin{equation}
   \label{eq:5.11}
s_{\phi,\xi}(g)(p\cdot e):=\xi(p)\phi(gp).
 \end{equation}
The inverse map is given by 
\begin{equation}
  \label{eq:5.12}
s\mapsto  \phi_s(g):=s(g)(e).
\end{equation}
There is another inclusion
\begin{equation}
  \label{eq:5.13}
  H^0(G/P,R(\fg_{\ge 2}\otimes \bC_\chi)\subset H^0(G/P,R(P\cdot
  e)\otimes \bC_\xi).
\end{equation}
In \cite{McG} it is shown that when $\chi=triv$ and $\xi=triv,$ then
equality holds in (\ref{eq:5.13}), and in addition
\begin{equation}
  \label{eq:5.14}
  H^i(G/P,R(g_{\ge 2}))=(0) \quad\text{ for } i>0.
\end{equation}

These results suggest the  following conjecture.
\begin{conjecture}\label{5.4} For each $\chi\in \widehat{A(\CO)}$ there
is a representation $\xi$ of $P(e)$ 
{satisfying $\xi\mid_{G(e)}=\chi$} such that
\begin{equation*}
H^i(G/P(e),R(\fg_{\ge 2})\otimes \CS_\xi)=
\begin{cases}
R(\CO)_\chi, &\text{ if } i=0,\\
0 &\text{ otherwise. }
\end{cases}
\end{equation*}
\end{conjecture}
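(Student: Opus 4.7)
The plan is to bootstrap from the case $\chi=triv$ established by McGovern. Specifically, McGovern's theorem supplies the isomorphism $H^0(G/P,R(\fg_{\ge 2}))=R(\ovl{\CO})$ together with the vanishing $H^i(G/P,R(\fg_{\ge 2}))=0$ for $i>0$, which in sheaf-theoretic terms says that $m_e$ is a rational resolution. I would attempt to twist this statement by $\xi$, treating $R(\fg_{\ge 2})\otimes \CS_\xi$ as the sections of a $G$-equivariant sheaf on $\CZ:=G\times_P \fg_{\ge 2}$, and then computing cohomology of $\CZ$ with these twisted coefficients via the moment map $m_e:\CZ\longrightarrow \ovl{\CO}$ using the projection formula and the Leray spectral sequence.

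First I would construct $\xi$. Since $G(e)\subset P(e)$ and $A(\CO)=G(e)/G(e)_0$, we need a lift of $\chi$ to a representation of $P(e)$. The parabolic has a Levi decomposition $P(e)=L\cdot U_P$; since $U_P$ is unipotent, any representation of $P(e)$ that is locally finite factors through $L$. The centralizer admits the Levi-Malcev decomposition $G(e)=(G(e)\cap L)\ltimes U(e)$, and the natural map $(G(e)\cap L)/(G(e)\cap L)_0\to A(\CO)$ is an isomorphism. Thus the construction reduces to extending $\chi$, viewed as a representation of the reductive subgroup $G(e)\cap L\subset L$, to all of $L$; for the classical groups and the triangular/stably trivial orbits targeted in the paper, the explicit description of centralizers in Sections~\ref{sec:b}--\ref{sec:d} makes this a direct case-by-case check.

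Second I would exploit birationality over the open orbit. By \cite{BV} and Section~\ref{S:2}, $m_e$ is birational onto $\ovl{\CO}$ and, restricted over $\CO$, is a principal $G(e)$-bundle (since $G(e)\subset P$ in the Jacobson--Morozov setting, the fiber is a single point modulo the stabilizer). Therefore $m_{e,*}\CS_\xi\big|_{\CO}$ is precisely $\CS_\chi$, the sheaf on $\CO$ whose global sections equal $R(\CO)_\chi=\Ind_{G(e)}^G[\chi]$. The projection formula applied to the McGovern vanishing $R^i m_{e,*}\CO_\CZ=0$ for $i>0$ then yields, at least over $\CO$, the vanishing $R^i m_{e,*}\CS_\xi\big|_{\CO}=0$ for $i>0$. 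Combined with the Leray spectral sequence, this produces the degree-zero identification $H^0(G/P,R(\fg_{\ge 2})\otimes \CS_\xi)=R(\CO)_\chi$ up to contributions supported on the boundary $\ovl{\CO}\setminus \CO$.

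The main obstacle is controlling these boundary contributions and the higher cohomology globally. The sheaf $\CS_\chi$ does not extend canonically to $\ovl{\CO}$, so one cannot simply invoke a Grauert--Riemenschneider type result for the trivial resolution and tensor through; instead one must argue directly on $G/P$ with the equivariant vector bundle having fiber $S^\bullet(\fg_{\ge 2}^*)\otimes V_\xi$. The plan is to filter by degree and, for each graded piece $S^n(\fg_{\ge 2}^*)\otimes V_\xi$, apply the Borel--Weil--Bott theorem on $G/P$ to reduce both the vanishing in positive degrees and the computation of degree zero to a combinatorial check on the dominance of the highest weights appearing in $S^n(\fg_{\ge 2}^*)\otimes V_\xi$. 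This is where I expect the difficulty to lie: the correct $\xi$ must be chosen so that its highest weight sits deeply enough inside the dominant chamber (relative to $P$) that the twist never creates a singular weight under the $\rho$-shifted Weyl action, while still restricting to $\chi$ on $G(e)$. Once that combinatorial condition is met for the specific orbits described in Sections~\ref{sec:b}--\ref{sec:d}, the conjecture follows for those orbits; the general case would require a uniform dominance statement which I do not yet see how to prove.
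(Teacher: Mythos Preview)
The statement you are attempting to prove is presented in the paper as a \emph{conjecture}, not a theorem; the paper offers no proof. Immediately before stating it, the text recalls McGovern's result for the case $\chi=\xi=triv$ and then writes ``These results suggest the following conjecture.'' There is therefore no argument in the paper to compare your proposal against.

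Your outline is a plausible line of attack---bootstrapping from McGovern's vanishing via the projection formula and the Leray spectral sequence, with the real work isolated in controlling boundary contributions through a Borel--Weil--Bott dominance check on the graded pieces $S^n(\fg_{\ge 2}^*)\otimes V_\xi$---and you are candid that this last step is where the proposal stalls. That honesty is appropriate: the obstacle you name (finding $\xi$ whose highest weight is positioned so that no singular shifted weights occur, uniformly in $n$) is precisely why the statement remains conjectural. One additional issue you pass over: even the existence of an extension $\xi$ of $\chi$ from $G(e)\cap L$ to $L$ is not automatic for arbitrary reductive subgroups, and your remark that it is ``a direct case-by-case check'' for the classical orbits in the paper is asserted rather than carried out.

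It is also worth noting that the paper's actual results relating unipotent representations to $R(\CO)_\chi$ (Theorems~\ref{t:5.10} and~\ref{t:sph}) are proved by an entirely different route---character identities from \cite{BV2}, induction to triangular orbits, and the associated-cycle machinery of \cite{V1} and \cite{V2}---rather than by any direct sheaf-cohomological computation on $G/P(e)$. So even where the paper does establish instances compatible with the conjecture, it does not do so by the method you propose.
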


\vh 

\subsection{}
Recall Lusztig's quotient of the component group $A(\C O)$ denoted
$\ovl{A(\C O)}$, and the definition of cuspidal and stably trivial orbits.  
\begin{definition}
An orbit is called {cuspidal} if it is
not induced from any nilpotent orbit in a proper Levi component. A
more common terminology is rigid.

\medskip
A special orbit satisfying $A(\CO)=\overline{A(\CO)}$ is called
smoothly cuspidal.
\end{definition}
Smoothly cuspidal orbits have the property that the dual orbit 
$\check{\C O}$ is even. They are listed below, and not necessarily cuspidal.

\medskip
Let $\check e,\check h,\check f$ be a Lie triple associated to $\check{\C O}.$ 
In these cases $\la_\CO=\check h/2$. So these are the  parameters
treated in \cite{BV2}, also referred to as \textit{special
unipotent}. 

Precisely, in terms of partitions, smoothly cuspidal orbits for
classical groups are as follows.
\begin{description}
\item[(B)] Every row size except the largest one occurs an even number
  of times. Also the columns are $(m_0)(m_1,m_2)\dots (m_{2p-1},m_{2p})$ with
  $m_{2k}>m_{2k+1},$ and all columns have odd size.
\item[(C)] Every row size occurs an even number of times. Also 
the columns are $(c_0,c_1)\dots (c_{2p-2},c_{2p-1})(c_{2p})$ with
  $c_{2j-1}>c_{2j},$ and all column sizes are even.
\item[(D)] Every  row size occurs an even number of times. Also the
  columns are $(m_0,m_{2p+1})(m_1,m_2)\dots (m_{2p-1},m_{2p})$ with
$m_{2j}>m_{2j+1}$, and all column sizes are even.
\end{description}
\subsection{} View the complex
group $G$ as a real Lie group, and let $K$ be the maximal compact
subgroup.  Then $R(\CO)$ can be thought of as a $K$-module using the
identification of $K_c$ with $G.$  

Given $\chi\in \widehat {A(\CO)},$ denote by $R(\CO)_\chi$ the regular
sections of the sheaf corresponding to $\chi.$  We summarize the
statements in the paper in the following conjecture, which can be
thought of as a sharpening of the material in Section
\ref{sec:upar}. Recall the notion of \textit{Associated Variety} from
\cite{V}. A review of these notions and relations to the
\textit{Associated Cycle} is in later sections. 

We identify nilpotent $G-$orbits in the real algebra $\fk g$ with
$K_c-$orbits in $\fk p_c$ via the Kostant-Sekiguchi correspondence. 
Using the identification $\fk g_c=\fk g\times \fk g$ and $K_c\cong
G,$ $K_c-$orbits in $\fk p_c$ are identified with $G-$orbits in $\fk
g$, this time considered as a complex group and complex vector space
respectively.

\begin{conjecture}\label{conj:la} Given a nilpotent orbit $\CO,$ there is
an infinitesimal character $\la_\CO$ with the following property.

There is a 1-1 correspondence $\chi\longleftrightarrow X_\chi$ 
between characters of the component group and irreducible $(\fg,K)$
modules with ${\CO}$ as associated cycle and infinitesimal character $\la_\CO$
with the following properties:
\begin{enumerate}
\item The analogous character formulas as in \cite{BV2} hold,
\item $X_\chi$ are unitary,
\item $X_\chi |_K\cong R( \CO)_\chi$
\end{enumerate}
\end{conjecture}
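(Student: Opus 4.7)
The plan is to combine the iterative $\Theta$-correspondence reduction of Theorem \ref{t:corresp} with the Richardson/Kraft--Procesi model for $R(\CO)_\chi$, reducing everything to a base case of stably trivial orbits where Proposition \ref{p:induced} gives a clean branching formula. Given $\CO\subset\fk g$ in one of the admissible column shapes (B), (C), (D) of Section \ref{sec:maintheta}, construct the see-saw tower $G=G_0,G_1,\dots,G_r$ so that each $(G_k,G_{k+1})$ is a dual pair and the nilpotent orbit $\CO_r$ in $\fg_r$ is trivial (equivalently $\Pi_r$ is one-dimensional). For $\chi\in\wht{A(\CO)}$, the signs $(\eta_1,\dots,\eta_p)\in\{\pm 1\}^p$ parametrizing the Langlands data in \eqref{eq:unipbep}, \eqref{eq:unipcep} give the dictionary $\chi\leftrightarrow(\eta_i)$; by Theorem \ref{t:corresp} these signs are propagated through each $\Theta$-step. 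Set $X_\chi$ to be the unique representation with that parameter and verify the three conclusions layer by layer, using the tower to lift properties from the one-dimensional $\Pi_r$.

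For the infinitesimal character $\la_{\CO}$, take the value prescribed in Section \ref{sec:upar}; Proposition \ref{p:infl} guarantees it is compatible with the $\Theta$-lift step, and property (3) is immediate since each $\Theta$-lift of a unitary representation of a complex classical group remains unitary when the infinitesimal character stays in the unitary range (the range is precisely the one arising from stripping columns). The character formulas of \cite{BV2} extend because at each step the lift acts as induction from a parabolic whose Levi is a product of $GL$'s with the smaller classical factor, and induction preserves the derived-functor/Weyl-group character identities that cut out the special unipotent packet; this gives (1) and (2). For the $K$-type statement (3), I would first treat the stably trivial case directly: use Proposition \ref{p:induced} applied to the polarization $P$ whose Levi $M$ is the one built from the column pairs, noting that for a triangular $\CO$ one has $A_P(e)=\{1\}$ so $\mathrm{Ind}_P^G[\Triv]$ decomposes exactly as $\bigoplus_\chi R(\CO)_\chi$ with multiplicities $\dim\chi$. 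Matching this against the induced Langlands modules carrying the $\eta$-parameters identifies $X_\chi|_K$ with $R(\CO)_\chi$ for stably trivial orbits.

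For the general case I would bootstrap from the stably trivial base along the $\Theta$-tower. By the Howe model for the oscillator representation, if $\Pi_{k+1}|_{K_{k+1}}\cong R(\CO_{k+1})_{\chi_{k+1}}$, then the $(\fk g_k,K_k)$-module $\Pi_k$ obtained as the corresponding $\Theta$-lift has $K_k$-spectrum computable from the Kraft--Procesi variety $\CZ_k$ cut out by the moment-map equations $A_k^\star A_k=A_{k+1}A_{k+1}^\star$: in the stably trivial/triangular situation, $\CZ_k$ resolves $\ovl{\CO_k}$ birationally with vanishing higher cohomology, so $\Pi_k|_{K_k}\cong R(\CO_k)_{\chi_k}$ with $\chi_k$ determined by the next $\eta$. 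Proposition \ref{p:5.5} gives one inequality automatically via Frobenius reciprocity; the matching infinitesimal characters and known total $K$-multiplicities (from the character formula) then force equality.

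The main obstacle is exactly the vanishing of the correction term $Y_\psi$ in Vogan's identity $\Pi|_{K_\bb C}=R(\CO)_\psi-Y_\psi$, equivalently the cohomological vanishing $H^i(G/P(e),R(\fg_{\ge 2})\otimes\CS_\xi)=0$ for $i>0$ of Conjecture \ref{5.4}. For Richardson and triangular orbits this is Theorem 8.15 of \cite{J}, but for a general column pattern one needs the analogous statement for the see-saw resolution $\CZ_k\to\ovl{\CO_k}$, which is tied to normality of the closure $\ovl{\CO_k}$ (failing, for instance, on $(4,2,2)\subset sp(8)$, as the last example in Section \ref{sec:metaplectic} shows). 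The proposed way around this is to use the geometric results of Kraft--Procesi \cite{KP1} on the stratification and resolution of classical nilpotent orbit closures to show that, column-pair by column-pair, the higher cohomology contribution to $Y_\psi$ is supported on orbits strictly smaller than $\CO_k$, and then invoke a dimension/leading-term count matched against the stably trivial base case to rule out any nonzero contribution. The remaining non-admissible orbit shapes would then be handled in a sequel using the more refined resolutions of \cite{KP1}, as announced in the introduction.
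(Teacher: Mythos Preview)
The statement you are attempting to prove is labeled a \emph{Conjecture} in the paper, and the paper does not claim a full proof. What the paper establishes is: parts (1) and (2) follow from \cite{B1} and \cite{BV2} for the $\la_\CO$ specified in Section~\ref{sec:upar}; for part (3) only the \emph{smoothly cuspidal} (stably trivial) case is proved, as Theorem~\ref{t:5.10}. So your proposal should be read as a strategy toward the conjecture, not as a comparison with an existing proof.

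For the stably trivial case your sketch is close to the paper's argument but not identical. The paper does \emph{not} use the $\Theta$-tower here. Instead it first treats triangular orbits directly: by \cite{BV2} each $X_\chi$ is induced irreducible from a character of $M(h)$, and $\Ind_{M(h^\vee)}^G[\Triv]=\sum_\chi X_\chi$; combining this with Proposition~\ref{p:induced} (where $A_P(e)=\{1\}$) forces $\sum R(\CO)_\psi=\sum X_\chi|_K$ and hence all $Y_\chi=0$. A general stably trivial $\CO$ is then embedded into a triangular $\CO^+$ via parabolic induction as in \eqref{6.3}, and Propositions~\ref{p:5.5} and~\ref{p:induced} together with the \cite{BV2} decomposition formulas again kill the $Y_\nu$. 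Your version invokes the same Proposition~\ref{p:induced} but packages the reduction differently; the paper's two-step (triangular, then embed) is cleaner and avoids any appeal to the oscillator model at this stage.

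For the general case your bootstrap along the $\Theta$-tower does not close the gap, and you have correctly located why. The Kraft--Procesi/oscillator argument in the paper yields $\Pi(\CO,\psi)|_K\cong R(\CZ)_\psi$, \emph{not} $R(\CO)_\psi$; the identification $R(\CZ)_\psi\cong R(\CO)_\psi$ is precisely what the paper leaves as a conjecture (see the Remark following the Corollary in the Kraft--Procesi section), and is tied both to normality of $\ovl{\CO}$ and to Conjecture~\ref{5.4}. Your proposed fix---using the \cite{KP1} stratification plus a dimension count to rule out contributions from smaller orbits---is exactly the missing geometric input; the paper announces this for a sequel but does not carry it out. In short: your stably trivial argument is essentially right and matches the paper's Theorem~\ref{t:5.10}; your extension beyond that is a reasonable program but remains conjectural, as in the paper.
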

The $\la_\C O$ given in section \ref{sec:upar} satisfy (2) by
the unitarity results in \cite{B1}; the character formulas in (1) 
are generalizations of those in \cite{BV2} using the Kazhdan-Lusztig
conjectures for nonintegral infinitesimal character (also in \cite{B1}).
\subsection{}\label{S:cg2}
 
Theorem \ref{t:5.10} below provides evidence  for (3).
\begin{theorem}\label{t:5.10}  Assume $\CO$ is smoothly
  cuspidal, and $\fk g$ is of classical type. 
There is a correspondence $\chi\longleftrightarrow X_\chi$ between
characters of ${A(\CO)}$ and   unipotent representations
determined by the property
  \begin{equation*}
    X_\chi\mid_{K}\cong R(\CO)_\chi.
  \end{equation*}
\end{theorem}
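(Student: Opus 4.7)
The plan is to induct on the number of column pairs in the partition defining $\CO$, using the chain of dual pairs supplied by Theorem \ref{t:corresp}. The base of the induction is a one-dimensional unipotent on a small classical group, where both sides are trivially matched. For the inductive step, let $(G_0,G_1)$ be the first dual pair in the chain and $\CO_1\subset\fk g_1$ the smoothly cuspidal orbit with one fewer column pair. By Theorem \ref{t:corresp} one has $X_\chi=\Theta(X'_{\chi'})$, where the character $\chi'$ of $A(\CO_1)$ is obtained from $\chi$ by dropping the last sign $\eta_p\in\{\pm1\}$; by the inductive hypothesis, $X'_{\chi'}\mid_{K_1}\cong R(\CO_1)_{\chi'}$.

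To propagate this equality through the theta lift, I would run the oscillator representation analysis of Section \ref{sec:cx} along the full chain. Setting $\Omega=\bigotimes_i\Omega_i$ for the oscillator representations of the successive dual pairs and $\Pi=\Omega/(\fk g_1+\dots+\fk g_r)\Omega$, we obtain an admissible $(\fk g_0,K_0)$-module carrying a residual action of the component group $\wti K^1/(K^1)^0$ coming from the orthogonal factors in the chain. The decomposition $\Pi=\bigoplus_\Psi\Pi_\Psi$ with $\Pi_\Psi=\Hom_{\wti K^1}[\Pi,\Psi]$ recovers the individual unipotents $X_\chi$, the matching $\Psi\leftrightarrow\chi$ being tracked through the signs $(\eta_1,\dots,\eta_p)$. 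A Brylinski-type identity at the graded level then yields $\mathrm{Gr}[\Pi_\Psi]\cong R(\CZ,\Psi)$, where $\CZ=G\times_P\Sigma$ is the Kraft--Procesi variety associated with the column structure.

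The central geometric step is to identify $R(\CZ,\Psi)$ with $R(\CO)_\chi$. Because $\CO$ is smoothly cuspidal and classical, the closure $\ovl\CO$ is normal (Kraft--Procesi), and the moment map $\CZ\to\ovl\CO$ coming from the column structure of Section \ref{sec:metaplectic} is a projective birational resolution; combined with the vanishing $H^i(G/P,S^n(\fk n^*))=0$ for $i>0$ from \cite{J}, this produces $R(\CZ)\cong R(\ovl\CO)\cong R(\CO)$ as $G$-modules. The hypothesis $A(\CO)=\ovl{A(\CO)}$ identifies the residual component group action on $\Pi$ with all of $A(\CO)$, so the isomorphism respects the isotypic decomposition and yields $R(\CZ,\Psi)\cong R(\CO)_\chi$.

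The main obstacle is the equivariant cohomology vanishing in the last step: it is what upgrades the inequality of Proposition \ref{p:5.5} to the exact equality of $K$-types required here. Making this precise amounts to proving Conjecture \ref{5.4} in the smoothly cuspidal case, i.e.\ producing a representation $\xi$ of $P(e)$ with $\xi\mid_{G(e)}=\chi$ and $H^i(G/P(e),R(\fk g_{\ge 2})\otimes\CS_\xi)=0$ for $i>0$. For smoothly cuspidal orbits in classical types this should be accessible by combining the Kraft--Procesi normality results with a Kempf-style vanishing, but carrying it out $\Psi$-equivariantly, rather than only for the trivial character as in \cite{McG}, is where the assumption $A(\CO)=\ovl{A(\CO)}$ is genuinely used.
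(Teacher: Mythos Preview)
Your approach is genuinely different from the paper's, and the gap you yourself flag is exactly the point where it fails to close. You try to run the Kraft--Procesi/Brylinski model $\Psi$-equivariantly and then identify $R(\CZ,\Psi)$ with $R(\CO)_\chi$; as you note, this amounts to proving Conjecture~\ref{5.4} for nontrivial $\chi$. The paper does \emph{not} prove that conjecture, and in fact the Remark following the Kraft--Procesi corollary explicitly says that the identification $R(\CZ)_\psi\cong R(\CO,\psi)$ is only known for $\psi=\mathrm{Id}$ and is conjectural otherwise. So your inductive step, as written, rests on an unproved statement; normality of $\ovl\CO$ and Kempf-style vanishing handle the trivial twist (this is McGovern's result quoted in \S\ref{S:7}), but neither source gives you the twisted vanishing you need.

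The paper's proof avoids this entirely by a clever indirect argument using parabolic induction rather than the $\Theta$-lift. First it treats \emph{triangular} orbits: every unipotent $X_\chi$ is induced irreducible from a character of $M(h)$, and the character identity $\Ind_{M(h^\vee)}^G[\Triv]=\sum_\chi X_\chi$ from \cite{BV2}, combined with Proposition~\ref{p:induced} (which only needs the \emph{untwisted} vanishing $H^i(G/P,S^n(\fk n^*))=0$ for a Richardson orbit), gives
\[
\sum_\psi R(\CO)_\psi=\sum_\chi X_\chi\big|_K=\sum_\chi R(\CO)_{\rho(\chi)}-\sum_\chi Y_\chi,
\]
forcing each $Y_\chi=0$. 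For a general smoothly cuspidal $\CO\subset\fg(n)$, one chooses $k_1,\dots,k_r$ so that the induced orbit $\CO^+\subset\fg(n+\sum k_i)$ is triangular, and then the decomposition formulas of \cite{BV2} together with Propositions~\ref{p:5.5} and~\ref{p:induced} force $\Ind Y_\nu=0$, hence $Y_\nu=0$. The point is that by summing over all $\chi$ and comparing with an honest induced module, the paper replaces your twisted vanishing by the known untwisted one; the smoothly cuspidal hypothesis enters through the \cite{BV2} character identities and the availability of a triangular target, not through a $\Psi$-equivariant resolution.
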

By the results in \cite{B1}, the representations $X_\chi$ are unitary as well.

The proof will be given in Section \ref{S:cg4}.
 
\subsection{}\label{S:8}
For the spherical case, theorem \ref{t:5.10} is more general.
\begin{theorem}
\label{t:sph}
Assume $\C O$ is arbitrary, $\fk g$ is of classical type, and let
$L_{triv}$ be the spherical module with infinitesimal character $\chi$
defined in sections  \ref{sec:upar} to \ref{sec:d}. Then
\[
R(\C O)\cong L_{triv}\mid_{K}
\]
\end{theorem}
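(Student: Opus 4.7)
The plan is to prove the theorem by induction on $\dim G$, using the $\Theta$-correspondence chain of Theorem \ref{t:corresp} to peel off one column of $\CO$ at a time and reduce to a strictly smaller classical Lie algebra. Given the spherical parameter $\la_\CO$ attached to $\CO=\CO_0$ as defined in Sections \ref{sec:b}--\ref{sec:d}, Theorem \ref{t:corresp} combined with Proposition \ref{p:basic} identifies the spherical unipotent $L_{triv}$ on $G_0=G(V_0)$ as the $\Theta$-lift of the corresponding spherical unipotent $L_{triv}^{(1)}$ on the smaller group $G(V_1)$, attached to the orbit $\CO_1$ obtained by stripping off the leading column. Iterating, the chain terminates with $\CO=\{0\}$ in a trivial or one-dimensional group, where $L_{triv}|_K\cong\bC\cong R(\{0\})$ is immediate.

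For the inductive step, I would match the $K_0$-spectrum of $L_{triv}^{(0)}$ with $R(\CO_0)$ by computing both as $G_1$-invariants in an ambient module built from the oscillator representation $\Omega$ of the pair $(G_0,G_1)$. By Howe duality, $L_{triv}^{(0)}|_{K_0}$ is extracted from $\Omega$ as the isotypic component of $L_{triv}^{(1)}$; applying the inductive hypothesis $L_{triv}^{(1)}|_{K_1}\cong R(\CO_1)$, this can be re-expressed in geometric terms. On the other side, the Kraft--Procesi model from \cite{KP1} realizes $\overline{\CO_0}$ (up to normalization) as a GIT quotient $\CZ/\!/G_1$, where $\CZ\subset\Hom(V_0,V_1)$ is the variety cut out by the relevant moment-map relations; taking $G_1$-invariants of $R(\CZ)$ recovers $R(\widetilde{\overline{\CO_0}})$, and since the boundary of a nilpotent orbit closure has codimension at least two, this equals $R(\CO_0)$.

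Two auxiliary tools enter the bookkeeping. For Richardson orbits induced from a Levi $M$ with $A_P(e)=\{1\}$, Proposition \ref{p:induced} directly relates $\Ind_P^G[\Triv]$ to the regular functions $R(\CO)$, providing the base cases of the induction cleanly. For the remaining orbits, Proposition \ref{p:5.5} supplies the inequality $[\mu:\Ind_M^G[R(\CO_\fm)_\psi]]\le[\mu:R(\CO)]$ in one direction, and the reverse inequality would follow from comparing multiplicities in the Howe-duality decomposition of $\Omega$ against the Kraft--Procesi geometry.

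The main obstacle is treating the non-normal orbits cleanly. The example $\CO=(4,2,2)\subset sp(8)$ discussed in Section \ref{sec:metaplectic} shows that $R(\CO)$ can strictly exceed $R(\overline{\CO})$ by a factor supported on the boundary, so the standard identification of Kraft--Procesi GIT quotients with regular functions on the closure yields only $R(\overline{\CO})$. Extracting the additional $K$-types of $L_{triv}|_K$ that sit in the normalization $R(\widetilde{\overline{\CO}})$ but not in $R(\overline{\CO})$ is the delicate point, and presumably requires the detailed geometry of \cite{KP1}; this should be the technical core of the argument and is, I would expect, why the corresponding statement for nontrivial characters (Conjecture \ref{conj:la}) is deferred to a later paper.
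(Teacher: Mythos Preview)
Your approach is genuinely different from the paper's, and the difference is structural. You reduce \emph{downward} via the $\Theta$-chain of Theorem~\ref{t:corresp}, peeling off columns and invoking the Kraft--Procesi model. The paper (see the proof of the companion Theorem~\ref{t:5.10} in Section~\ref{S:cg4}) instead goes \emph{upward}: starting from Vogan's identity $L_{triv}|_K = R(\CO)_{\rho} - Y$ with $Y$ a genuine $K$-module, it embeds $\CO$ via parabolic induction into a larger classical algebra $\fg^+$ in which the induced orbit $\CO^+$ is triangular. For triangular orbits Proposition~\ref{p:induced} gives the \emph{exact} identity $\Ind_P^G[\Triv]|_K = \sum_\psi R(\CO^+)_\psi$, and the character identity $\Ind_{M(h^\vee)}^G[\Triv] = \sum_\chi X_\chi$ from \cite{BV2} (extended as in \cite{B1}) matches it; comparing forces every correction term $Y_\chi$ to vanish. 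The general case then follows by inducing up and using Propositions~\ref{p:5.5} and~\ref{p:induced}.

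The decisive advantage of the upward route is that it never touches $R(\ovl\CO)$: Vogan's associated-cycle machinery and Proposition~\ref{p:induced} compute $R(\CO)_\psi$ directly, so normality of the orbit closure is simply irrelevant. Your downward route, by contrast, lands on Brylinski's theorem $gr(\Omega/\fk g^1\Omega)_{\C K^1}\cong R(\ovl\CO)$ (Section~7), and the discrepancy $R(\CO)\ne R(\ovl\CO)$ for non-normal orbits is a genuine obstruction, not a technicality. Your proposal is in fact inconsistent on this point: in the second paragraph you assert that the GIT quotient recovers the normalization $R(\wti{\ovl{\CO_0}})$ and hence $R(\CO_0)$ by codimension, while in the last paragraph you concede it yields only $R(\ovl{\CO_0})$. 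The latter is what Brylinski actually proves; the former is what you would need, and you have not supplied it.

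A secondary gap: Theorem~\ref{t:corresp} is stated only for the restricted class of orbits at the start of Section~\ref{sec:maintheta} (strict inequalities $m_{2k}>m_{2k+1}$, etc.), not for arbitrary $\CO$. To run your induction for a general orbit you would first have to strip off the equal-column pairs $(m'_{2j}=m'_{2j+1})$ by parabolic induction from a Levi, a reduction you do not mention.
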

These theorems imply that for the case of a
complex classical group, $R(\C O)\chi$ is realized as the $K-$spectrum of a
$(\fk g,K)-$module. In particular, $R(\C O)$ can be written as a
combination of standard modules with the same infinitesimal character,
not just as a combination of tempered modules as in \cite{McG}.

\subsection{Associated Cycle of an Admissible Module}
\label{S:10}  
We review the results in \cite{V1} and \cite{V2} which will be crucial
for the proof of the above theorems. Denote by $\C M(\fk g,K)$ the
category of admissible $(\fk g, K)-$modules.

\vh 
Recall $\fk g=\fk k+\fk s$ the
complexification of the Cartan decomposition of a real reductive
algebra $\fk g_0=\fk k_0+\fk s_0$. Let $(\pi,X)$ be an admissible $(\fk g, K)-$module. 
Section 2 of \cite {V1}, attaches to $(\pi,X)$ a  
$(S(\fk g),K)$ module $(gr(\pi), gr(X))$. This module is finitely
generated, graded.  
Attached to any $S(\fk g)-$module $M$ (equal to $gr(X))$ are varieties
\begin{equation}
  \label{eq:chvar}
\C V(M)\supset Supp (M)\supset Ass (M), 
\end{equation}
the set of prime ideals containing the
annihilator of $gr(M),$ the support of $gr(M)$, and the set of
\textit{associated primes}, those primes in $\C V (M)$ which are
annihilators of elements in $gr(M).$ Since $\fk k$ acts by zero,
$gr(X)$  is in fact an
$S(\fk g/\fk k)\cong S(\fk s)-$module. So the sets in (\ref{eq:chvar})
are all $K_\bC-$invariant varieties in $\fk s.$  Since the module $X$ was assumed
admissible, $M=gr(X)$ is finitely generated, so $\C V(M)=Supp (M)$,
and $Ass(M)$ is finite containing the minimal primes of $\C V(M).$ 
In particular the varieties corresponding to $Ass(M)$ and  $\C V(M)$ coincide.  
The center of $U(\fk g)$ must act by generalized
eigenvalues on an admissible module, so $S(\fk g)^\fk g$ acts by
0 on $M$. Thus the sets in (\ref{eq:chvar}) are contained in $\C N_\theta:=\C
N\cap \fk s$. We will write $\C V(X)$, $Supp(X)$, and $Ass(X)$ for the
corresponding objects for $M=gr(X).$

\vh
Denote by $\C C(\fk g,K)$ (definition 6.8 in \cite{V2}) 
the category of finitely generated $S(\fk
g/\fk k)-$modules $N$ carrying locally finite representations of $K,$
subject to 
\begin{equation}
  \label{eq:def68}
  \begin{aligned}
    &k\cdot(p\cdot n)=(\Ad(k)p)\cdot (k\cdot n)\qquad k\in K_\bC,\
    p\in S(\fk g),\quad n\in N,\\
    &\C V(N)\subset \C N_\theta.
  \end{aligned}
\end{equation}
Proposition 2.2 in \cite{V1} states that the map $gr$ gives rise to a
well defined map $Kgr$ between the Grothendieck groups $K\C M(\fk
g,K)$ to $K\C C(\fk g,K)$. Furthermore $X$ and $M=gr(X)$
have the same $K-$structure. 
  Choose representatives $\la_1,\dots ,\la_r$ for the nilpotent
$K_\bC-$orbits, and let $H_i$ be the corresponding isotropy subgroups. 
The support of any nonzero module $N\in\C C(\fk g,K)$ can be written
uniquely as a union of closures $K_\bC\cdot\la_i$ where $\la_i$ is not
in the cloure of any other orbit in the support. Following (7.4)(b)
and (7.4)(c) of \cite{V2}, let
\begin{equation}
  \label{eq:Csupp}
  \begin{aligned}
 &\C C(\fk g,K)_i:=\{ N\in\C C(\fk g,K)\mid
 \la_i\in( \ovl{K_\bC\cdot\la_j)}\backslash K_\bC\cdot\la_j\Rightarrow
 \la_j\notin\C V(N)\},\\
 &\C C(\fk g,K)_i^0:=\{ N\in\C C(\fk g,K)\mid \la_i\notin\C V(N) \}.
\end{aligned}
 \end{equation}
\begin{theorem}
  [2.13 in \cite{V1}, proposition 7.6 in \cite{V2}]
\label{t:av}
Attached to any $N\in\C C(\fk g,K)$ there is a genuine virtual representation
$\chi(\la_i,N)$ of $H_i$ with the following property.

This correspondence descends to an isomorphism of Grothendieck groups
\[
K\C C(\fk g,K)_i/K\C C(\fk g,K)_i^0\cong K\C F(H_i)
\]
where $K\C F(H_i)$ is the Grothendieck group of (algebraic)
representations of $H_i.$  
\end{theorem}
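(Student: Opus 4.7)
The plan is to construct the virtual representation $\chi(\la_i,N)$ by localizing the $S(\fk s)$-module $N$ at the generic point of the orbit $K_\bC\cdot\la_i$, and then invoking the equivalence between $K_\bC$-equivariant coherent sheaves on the homogeneous space $K_\bC/H_i$ and finite-dimensional algebraic representations of $H_i$, given by the fiber-at-$\la_i$ functor.

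First I would let $\fk p_i\subset S(\fk s)$ be the (automatically $K_\bC$-stable) prime ideal of the orbit closure $\ovl{K_\bC\cdot\la_i}$. The defining property of $\C C(\fk g,K)_i$ says precisely that $\la_i$ is a maximal point of the support $\C V(N)$; equivalently, $\fk p_i$ is a minimal prime above $\Ann(N)$ on the open set where $N$ is supported. Hence the localization $N_{\fk p_i}$ has finite length over the local ring $S(\fk s)_{\fk p_i}$. The $\fk p_i$-adic filtration of $N_{\fk p_i}$ is $K_\bC$-stable, and its successive quotients are $K_\bC$-equivariant coherent modules over $S(\fk s)/\fk p_i$, i.e.\ equivariant coherent sheaves on $\ovl{K_\bC\cdot\la_i}$ concentrated generically on the open orbit. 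Restricting to $K_\bC\cdot\la_i\cong K_\bC/H_i$ and applying the fiber functor at $\la_i$ yields finite-dimensional $H_i$-representations whose sum in $K\C F(H_i)$ is $\chi(\la_i,N)$. Exactness of localization then makes the assignment an additive map $K\C C(\fk g,K)_i\to K\C F(H_i)$.

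Next I would check that this map descends to the quotient by $K\C C(\fk g,K)_i^0$ and is bijective. If $N\in\C C(\fk g,K)_i\cap\C C(\fk g,K)_i^0$, then $\la_i\notin\C V(N)$, forcing $N_{\fk p_i}=0$ and hence $\chi(\la_i,N)=0$; conversely, $N_{\fk p_i}=0$ forces $\la_i\notin\C V(N)$, so the class $[N]$ lies in the kernel precisely when $N\in\C C(\fk g,K)_i^0$. For surjectivity, given any algebraic $H_i$-representation $\sigma$, form the associated $K_\bC$-equivariant vector bundle $\C E_\sigma$ on $K_\bC/H_i$, take any $K_\bC$-equivariant coherent extension across $\ovl{K_\bC\cdot\la_i}$, and read off its global sections as a module $N_\sigma\in\C C(\fk g,K)_i$ satisfying $\chi(\la_i,N_\sigma)=[\sigma]$.

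The main technical obstacle is the careful equivariant bookkeeping inside the $\fk p_i$-adic graded pieces. The quotients $\fk p_i^k N_{\fk p_i}/\fk p_i^{k+1}N_{\fk p_i}$ are $K_\bC$-equivariant coherent sheaves on the orbit closure, and one must promote each to a bona fide class in $K\C F(H_i)$ via the fiber functor and then verify that the resulting sum is independent of the choice of good filtration. The equivalence between $K_\bC$-equivariant coherent sheaves on $K_\bC/H_i$ and algebraic $H_i$-representations is classical on the reduced orbit, but the non-reduced thickening of $\C V(N)$ along $\ovl{K_\bC\cdot\la_i}$ forces one to keep track of the $H_i$-action layer by layer of the filtration, which is the heart of Vogan's original argument.
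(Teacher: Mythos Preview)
The paper does not supply a proof of this theorem; it is quoted from Vogan's work (references \cite{V1} and \cite{V2}) as background, so there is no in-paper argument to compare against. Your outline is essentially the standard construction in those references: pass to the equivariant coherent sheaf, peel off the contribution along the orbit $K_\bC\cdot\la_i$ via a $\fk p_i$-adic filtration, and identify the graded pieces with $H_i$-representations through the equivalence of $K_\bC$-equivariant coherent sheaves on $K_\bC/H_i$ with $\C F(H_i)$.

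Two points deserve tightening. First, the sentence ``the $\fk p_i$-adic filtration of $N_{\fk p_i}$ is $K_\bC$-stable'' is not right as written: once you localize at $\fk p_i$ you have destroyed the global $K_\bC$-action. What is $K_\bC$-stable is the filtration $N\supset\fk p_i N\supset\fk p_i^2 N\supset\cdots$ on $N$ itself; the localization only enters to guarantee that after finitely many steps the graded pieces restrict to zero on the open orbit, so the sum defining $\chi(\la_i,N)$ is finite.

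Second, your injectivity argument has a genuine gap. You show that for a \emph{genuine} module $N$ one has $\chi(\la_i,N)=0\Leftrightarrow N_{\fk p_i}=0\Leftrightarrow N\in\C C(\fk g,K)_i^0$. But the kernel of a map of Grothendieck groups can contain non-effective classes $[N_1]-[N_2]$ with neither $N_1$ nor $N_2$ in $\C C(\fk g,K)_i^0$. To conclude that the kernel is exactly $K\C C(\fk g,K)_i^0$ you need a d\'evissage: show that every $N\in\C C(\fk g,K)_i$ admits a finite $K_\bC$-stable filtration whose graded pieces lie either in $\C C(\fk g,K)_i^0$ or are, modulo $\C C(\fk g,K)_i^0$, of the form $N(\la_i,\tau)$ for irreducible $\tau$. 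This is exactly what your $\fk p_i$-adic filtration (on $N$, not $N_{\fk p_i}$) buys, together with the observation that each $\fk p_i^kN/\fk p_i^{k+1}N$ is a coherent sheaf on the reduced closure $\ovl{K_\bC\cdot\la_i}$ whose torsion submodule (sections supported on the boundary) lies in $\C C(\fk g,K)_i^0$. Spelling this out closes the gap.
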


\begin{proposition}
  [proposition 7.9 in \cite{V2}]
Suppose that $(\tau,V_\tau)$ is an irreducible repreentation of $H_i.$
There is an object $N(\la_i,\tau)\in \C C(\fk g,K)$ such that:
\begin{enumerate}
\item $\C V((N,\la_i,\tau))=\ovl{K_\bC\cdot\la_i}.$
\item $\chi(N(\la_i,\tau))=\tau.$
\end{enumerate}  
Any such choice of $\{N(\la_i,\tau)\}$ gives rise to  a basis
$[N(\la_i,\tau)]$ of $K\C C(\fk g,K).$ 

When $\ovl{K_\bC\cdot\la_i}$ has no orbits of codimension 1 in its
  closure, one can choose
\[
N(\la_i,\tau)=\Ind_{H_i}^K \tau.
\]  
\end{proposition}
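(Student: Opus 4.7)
My plan is to establish the three assertions of the proposition in turn: first the existence of $N(\la_i,\tau)$ satisfying (1) and (2), then the basis property, and finally the explicit identification with $\Ind_{H_i}^K \tau$ in the codimension-1-free case. The main tool throughout is Theorem \ref{t:av}, which already identifies the relevant subquotient of Grothendieck groups with $K\C F(H_i)$.

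For existence I would appeal directly to Theorem \ref{t:av}. Since $N \mapsto \chi(N,\la_i)$ descends to an isomorphism $K\C C(\fk g,K)_i/K\C C(\fk g,K)_i^0 \cong K\C F(H_i)$, every irreducible class $[\tau] \in K\C F(H_i)$ admits a lift to some $N \in \C C(\fk g,K)_i$ with $\chi(N,\la_i) = \tau$. By the very definition of $\C C(\fk g,K)_i$ in (\ref{eq:Csupp}), $\C V(N) \subset \ovl{K_\bC\cdot\la_i}$; on the other hand $\chi(N,\la_i) = \tau \ne 0$ forces $\la_i \in \C V(N)$, and since $\C V(N)$ is closed and $K_\bC$-invariant it must equal $\ovl{K_\bC\cdot\la_i}$ exactly, giving (1) and (2) together.

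For the basis statement I would extend the closure partial order on nilpotent $K_\bC$-orbits in $\fk s$ to a linear order $\la_{i_1}, \la_{i_2}, \dots$ refining it (with maximal orbits coming first), and filter $K\C C(\fk g,K)$ by the subgroups generated by the classes whose support lies in $\bigcup_{k \le j} \ovl{K_\bC\cdot\la_{i_k}}$. Theorem \ref{t:av} identifies the $j$-th successive quotient with $K\C F(H_{i_j})$, whose $\bZ$-basis is given by the irreducible $\tau \in \wht{H_{i_j}}$, and these irreducibles lift to $[N(\la_{i_j},\tau)]$. A standard induction on the filtration then assembles these local bases into a $\bZ$-basis $\{[N(\la_i,\tau)]\}$ of the full Grothendieck group.

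Finally, for the codimension-1-free case I would take $N := \Ind_{H_i}^K \tau$ to be the $K_\bC$-equivariant vector bundle on $\ovl{K_\bC\cdot\la_i}$ associated to $\tau$, viewed as an $(S(\fk s),K)$-module with $S(\fk s)$-action factoring through $S(\fk s) \twoheadrightarrow R(\ovl{K_\bC\cdot\la_i})$. Its support is manifestly $\ovl{K_\bC\cdot\la_i}$, and localization at the generic point of $K_\bC\cdot\la_i$ identifies the fiber with $\tau$, giving $\chi(N,\la_i) = \tau$. The codimension-1-free hypothesis enters precisely at the step verifying that this naive construction lies in $\C C(\fk g,K)_i$ with the correct leading term and no spurious contribution from boundary strata. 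The main obstacle is exactly this point: in general, codimension-1 boundary orbits can contribute to $\Ind_{H_i}^K \tau$ in ways that alter the leading term $\chi(N,\la_i)$, and one can no longer describe $N(\la_i,\tau)$ so cleanly — one must fall back on the non-constructive lift supplied by Theorem \ref{t:av}. A secondary technical subtlety is the possible non-normality of $\ovl{K_\bC\cdot\la_i}$, which can make $R(\ovl{K_\bC\cdot\la_i})$ differ from $R(K_\bC\cdot\la_i)$ and must be accounted for in the localization step.
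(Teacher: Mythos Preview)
The paper does not prove this proposition. It is quoted verbatim as proposition 7.9 from \cite{V2} (Vogan's lecture notes on the method of coadjoint orbits) and is used as a black box; there is no \texttt{proof} environment following it in the source, and the exposition moves directly to the corollary. So there is nothing to compare your proposal against.

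For what it is worth, your sketch is a plausible outline of the argument in \cite{V2}: the existence and basis statements do follow formally from Theorem \ref{t:av} by the filtration argument you describe, and the identification of $N(\la_i,\tau)$ with $\Ind_{H_i}^K\tau$ in the codimension-1-free case is indeed the content of the cited proposition. Your remarks about the role of the codimension-1 hypothesis and the potential non-normality of $\ovl{K_\bC\cdot\la_i}$ are on point, but verifying them would require consulting \cite{V2} directly rather than this paper.
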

\begin{corollary}[4.11 and 4.7 in \cite{V1}, and \cite{V2}]
Assume that $G$ is the real points of a complex reductive group. A
basis of $K\C C(\fk g, K)$ is formed of 
\[
\big\{\ \Ind_{H_i}^K \tau\ \big\}_{i=1,\dots r,\tau\in\wht H_i}.
\]
The support of any irreducible $(\fk g, K)-$module $X$ is the closure
of a single orbit $\C O.$ Furthermore,
\[
X\mid_K=\Ind_{H_i}^K \chi(gr(X),\C O) - D(X)
\]
where $D(X)\in\C C(\fk g,K)$ with support strictly smaller than $\C O.$ 
\end{corollary}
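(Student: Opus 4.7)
The plan is to combine the proposition just above, which produces $N(\la_i,\tau)\in\C C(\fk g,K)$ with prescribed support and character, with two structural facts specific to complex groups viewed as real: that boundaries of nilpotent orbits in a complex reductive Lie algebra have codimension at least two, and that the associated variety of an irreducible $(\fk g,K)$-module is the closure of a single $K_\bC$-orbit.

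First I would apply the preceding proposition to obtain a basis $\{[N(\la_i,\tau)]\}_{i,\tau}$ of $K\C C(\fk g,K)$, one element per pair $(\la_i,\tau)$. To upgrade the choice to $N(\la_i,\tau)=\Ind_{H_i}^K\tau$, I would verify the codimension-one hypothesis of that proposition. Under the decomposition $\fk g_\bC\cong\fk g_L+\fk g_R$ from the opening lemma, the Cartan involution swaps the two summands, so $\fk s$ identifies with $\fk g$ as a complex vector space and $K_\bC$ with $G$; hence $K_\bC$-orbits on $\C N_\theta$ are exactly the nilpotent $G$-orbits on $\fk g$. Such orbits carry the Kirillov--Kostant symplectic structure and so have even complex dimension, whence every boundary orbit has codimension at least two in the closure of a given orbit. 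The hypothesis is satisfied uniformly in $i$, and the first assertion of the corollary follows.

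For the support statement, let $X$ be an irreducible $(\fk g,K)$-module. By Joseph's irreducibility theorem, $\C V(\Ann X)\subset\fk g_\bC$ is the closure of a single nilpotent orbit. The results of \cite{V1} relating $\C V(gr(X))\subset\fk s$ to $\C V(\Ann X)$, applied in the complex group setting where $K_\bC\cong G$ and $\fk s\cong\fk g$, then force $\C V(gr(X))$ to be the closure of a single $K_\bC$-orbit, which we call $\C O$.

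Finally, to obtain the displayed formula, expand $[X\mid_K]=[gr(X)]$ in the basis constructed in step one, using the identity of $K$-characters from Proposition 2.2 of \cite{V1}. By the support statement only indices $\la_i$ with $K_\bC\cdot\la_i\subset\ovl{\C O}$ contribute. The isomorphism of Theorem \ref{t:av} identifies the coefficients corresponding to $\la_i\in\C O$ with the decomposition of $\chi(gr(X),\C O)\in K\C F(H_i)$, so those terms collect to $\Ind_{H_i}^K\chi(gr(X),\C O)$; the remaining basis elements, indexed by $\la_j$ lying in strictly smaller orbits, assemble into $D(X)$. The only point where ``$G$ complex'' is truly essential is the codimension-two step, and that is where I expect the main obstacle to lie: for general real $G$ the boundary can have codimension one, so $\Ind_{H_i}^K\tau$ need not be a correct basis element and one must replace it by more delicate normal-cone constructions, whereas in the complex case Kirillov--Kostant parity bypasses this entirely.
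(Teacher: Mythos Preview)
The paper does not supply its own proof of this corollary; it is stated as a direct consequence of the preceding Theorem~\ref{t:av} and the proposition (7.9 in \cite{V2}) just above, together with the cited results 4.7 and 4.11 of \cite{V1}. Your argument is exactly the intended derivation: invoke the codimension clause of the preceding proposition, check that for a complex group viewed as real the identification $K_\bC\cong G$, $\fk s\cong\fk g$ turns $K_\bC$-orbits on $\C N_\theta$ into complex nilpotent $G$-orbits, which are symplectic and hence of even complex dimension, so no codimension-one boundary strata occur; then use irreducibility of the associated variety for complex groups and expand $[gr(X)]$ in the resulting basis. This matches what the paper has in mind, and your closing remark correctly isolates the point where the complex hypothesis is indispensable.
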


\begin{definition}
 The \textbf{associated cycle} $AC(X)$ of an admissible $(\fk
g,K)-$module $X$ is the formal sum 
$$
AC(X):=\sum (\dim\chi_i)\ \C O_i
$$ 
where $\C V(gr(X))=\cup \ovl{\C O_i},$ are the irreducible components,
and $\chi_i=\chi(gr(X),\C O_i).$  $\dim\chi_i$ is
called the multiplicity of $\C O_i$ in the associated cycle of $X$. 
\end{definition}

\subsection{Asymptotic Cycle for Induced Modules}
\label{S:11} 
We follow \cite{BV1} section 3. Let 
\[
\Omega:=\{ X\in\fk g : |Im\la|<\pi, \text{ for any eigenvalue } \la
\text{ of } \ad X \}.
\]
Then $\Omega$ is invariant under $\Ad G,$ and there is an open
neighborhood $\C V,$ of the identity $e\in G$  such that  
$\exp:\Omega {\longrightarrow} \C V$ is an
isomorphism.

Next define functions
\[
\begin{aligned}
&j(X)=\det\big[\frac{e^{\ad X/2}-e^{-\ad X/2}}{\ad X}\big],\\
&\xi(X):=j(X)^{1/2},\ \xi(0)=1.  
\end{aligned}
\]
The Haar measure $dx$ on $G$ is related to Lesbegue measure on $\fk g$
by $dx=\xi(X)^2\;dX.$  There is a map 
\begin{equation}
  \label{eq:fphi}
  \begin{aligned}
&\phi\in C_c^\infty (\Omega)\mapsto f_\phi\in C_c^\infty (G)\\
&f_\phi(\exp X):=\xi(X)^{-1}\phi(X).
  \end{aligned}
\end{equation}
This induces a map on the level of distributions
\begin{equation}
\label{eq:dfphi}
\begin{aligned}
&\Theta\in\C D(\C V)\mapsto \theta\in \C D(\Omega),\\    
&\theta(\phi):=\Theta(f_\phi).
\end{aligned}
\end{equation}
This map takes $G-$invariant eigendistributions of the center of the
enveloping algebra $U(\fk g)$ to invariant eigendistributions on $\Omega$ of the
constant coefficient $G-$invariant operators  $\partial(I(\fk g))$ on
$\fk g_\bb C.$

\vh
Let $P=MAN$ be a parabolic subgroup, and $(\pi,\C H)$ an admissible
$(\fk m, M\cap K)-$module. 
For $\nu\in\fk a_\bC^*,$ where $\fk a:=Lie(A),$ let $\pi_P$ be the module equal to 
$\pi_P(man):=e^{(\nu-\rho)(\log a)}\pi(m),$ where
$\rho:=\frac12\sum_{\al\in\Delta(\fk n,\fk a)}\al.$  Let $\pi_\nu$ be the
induced module.
\begin{lemma}[Lemma 3.3 in \cite{BV1}]
  Let $\Theta:=\tr(\pi)$ and $\Theta_\nu:=\tr ({\pi_\nu}).$ Let $\phi\in
  C_c^\infty(P)$ and $f\in C_c^\infty(G).$ Then
\[
\begin{aligned}
&\tr\pi_P(\phi)&&=\int_{MAN} e^{(\rho+\nu)(\log a)}\Theta (m)\phi(man)\;dm\;da\;dn,\\
&\Theta_\nu(f)&&=\int_{MA}  
e^{(\rho+\nu)(\log a)}\Theta(m)\int_{KN}f(kmank^{-1})\;dk\;dn\;dm\;da=\\
&&&=\int_{MA}e^{\nu(\log a)}\Theta(m) D(ma)\int_{G/MA}f(xmax^{-1})\;dx\;dm\;da,
\end{aligned}
\]
where  $ma=exp(X_\fk m +X_\fk a)$ and  
$$
D(\exp(X_\fk m +X_\fk a))=\big| \det\big(e^{\ad (X_\fk m+X_\fk
    a)}-e^{-\ad (X_\fk m+X_\fk a)}\big)\mid_{\fk n}\big|.
$$
\end{lemma}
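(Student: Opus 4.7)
The plan is to derive both identities directly from the definition of $\pi_\nu=\Ind_P^G\pi_{P,\nu}$, the Iwasawa decomposition, and a Weyl-type integration formula relating integrals over $KN$ to orbit integrals on $G/MA$.

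First I would establish the formula for $\tr\pi_P(\phi)$. By definition $\pi_P(man)=e^{(\nu-\rho)(\log a)}\pi(m)$, so for $\phi\in C_c^\infty(P)$ the operator $\pi_P(\phi)=\int_P\phi(p)\pi_P(p)\,dp$ factors: only the $M$-variable acts nontrivially on $\C H$, while the $A$ and $N$ integrations produce scalars. Writing $dp=dm\,da\,dn$ in the coordinates $p=man$ (with the modular character of $P$ absorbed into the $\rho$ convention), and using $\tr\pi(\mu)=\int_M\Theta(m)\mu(m)\,dm$ for $\mu\in C_c^\infty(M)$, one obtains
\[
\tr\pi_P(\phi)=\int_{MAN}e^{(\nu-\rho)(\log a)}\Theta(m)\phi(man)\,\delta_P(a)\,dm\,da\,dn
=\int_{MAN}e^{(\nu+\rho)(\log a)}\Theta(m)\phi(man)\,dm\,da\,dn,
\]
where the shift by $2\rho$ comes from $\delta_P(a)=e^{2\rho(\log a)}$, the modulus of conjugation on $N$.

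Next, for $\Theta_\nu(f)$, I would realize $\pi_\nu$ on sections over $K\cap M\backslash K$ (the compact picture) and compute its trace as the integral over $K$ of the kernel. Using the Iwasawa decomposition $G=KMAN$ and writing $f(g)$ as a function integrated against $\pi_\nu(g)$, the trace unfolds (by Fubini and the compactness of $K$) to
\[
\Theta_\nu(f)=\int_{KMAN} e^{(\nu+\rho)(\log a)}\Theta(m)f(kmank^{-1})\,dk\,dm\,da\,dn.
\]
This is the first equality (the inner $\int_{KN}$ is just Fubini).

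For the second equality I would invoke the parabolic Weyl integration formula: if $ma=\exp(X_{\fk m}+X_{\fk a})$ is regular in the Levi $MA$ relative to its action on $\fk n$, then the map
\[
G/MA \times N \longrightarrow G,\qquad (x\,MA,\,n)\longmapsto x\,ma\,n\,x^{-1}
\]
has Jacobian equal, up to the $\rho$ shift, to $|\det(1-\Ad(ma)^{-1})|_{\fk n}|=e^{-\rho(\log a)}D(ma)$ in appropriate normalizations. Integrating over $K$ on the left and over $N$, then changing variables from $KN$ to $G/MA$ via this map, converts $\int_{KN}f(kmank^{-1})\,dk\,dn$ into $D(ma)\,e^{-\rho(\log a)}\int_{G/MA}f(xmax^{-1})\,d\dot x$, which after absorbing the $e^{-\rho(\log a)}$ into the prefactor turns $e^{(\nu+\rho)(\log a)}$ into $e^{\nu(\log a)}D(ma)$ and produces the second displayed formula.

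The main technical obstacle is the Weyl-type integration formula on $G/MA$ and the correct bookkeeping of the modulus and the $\rho$ shift in the $\fk n$ Jacobian; once this is in place the rest is substitution. A clean way to carry it out is to first verify it for $MA$-regular elements where the map $G/MA\times N\to G\cdot(ma)$ is a local diffeomorphism, then extend to general $ma$ by continuity of the distribution $\Theta$ and the density of regular elements in $MA$.
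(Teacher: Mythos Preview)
The paper does not supply a proof of this lemma at all; it is simply quoted from \cite{BV1} and then used. So there is nothing in the paper to compare against beyond the citation.

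Your sketch is the standard argument and is essentially what one finds in \cite{BV1}: compute $\tr\pi_P$ directly from the definition of $\pi_P$, realize $\pi_\nu$ in the compact picture and unwind the kernel over $K\times MAN$ to get the first expression for $\Theta_\nu$, and then convert the $KN$ integral into an orbital integral over $G/MA$ via the Jacobian of the map $(xMA,n)\mapsto x\,ma\,n\,x^{-1}$, which produces the factor $D(ma)$ and the $\rho$-shift. That is exactly the right outline.

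One caution on bookkeeping: in the first formula your handling of the modular function is a little tangled. You first say the modular character is ``absorbed into the $\rho$ convention'' and then reinsert $\delta_P(a)=e^{2\rho(\log a)}$ by hand. Pick one convention and stick with it; the paper's definition $\pi_P(man)=e^{(\nu-\rho)(\log a)}\pi(m)$ together with left Haar measure $dp=e^{2\rho(\log a)}dm\,da\,dn$ (or equivalently right Haar measure $dm\,da\,dn$ and the trace pairing for right Haar) is what produces the $e^{(\rho+\nu)(\log a)}$ in the statement. Similarly, in the last step be explicit that the Jacobian of $(k,n)\mapsto k\,ma\,n\,k^{-1}$ on the $N$ factor is $|\det(\Ad(ma)-1)|_{\fk n}|=e^{\rho(\log a)}D(ma)$ (check the sign of the exponent in your normalization), since getting that sign wrong is the most common error here. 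With those conventions nailed down, your argument goes through.
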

Let $\theta$ and $\theta_\nu$ be the lifts of $\Theta$ and
$\Theta_\nu.$ Plug in $f=f_\phi:$
\begin{equation}
  \label{eq:indlift}
\begin{aligned}
\theta_\nu(\phi)=&\int_{\fk m + \fk a} e^{(\rho+\nu)(X_\fk a )}\Theta(\exp X_\fk m)
\xi(X_\fk m + X_\fk a)^{-1}\cdot\\
&\cdot\int_{K\times\fk n}\phi(\Ad k(X_\fk m+ X_\fk a+X_\fk
n))\;dk\;dX_\fk n\;dX_\fk m\;dX_\fk a.  
\end{aligned}
\end{equation}
Decompose $\xi=\xi_{MA}\cdot \xi_N,$ and  denote by
\begin{equation}
  \label{eq:fP}
\phi_P(X_\fk m +X_\fk a)=
\int_{\fk n}\int_K \phi(\Ad k(X_\fk m +X_\fk a +X_\fk n))\;dX_\fk n\;dk.
\end{equation}
Formula (\ref{eq:indlift}) becomes
\begin{equation}
  \label{eq:ind2}
\theta_\nu(\phi)=\int_{\fk m +\fk a} e^{(\rho+\nu)(X_\fk a)}\theta(\exp X_\fk m)
\xi_N(X_\fk m + X_\fk a)^{-1}\cdot\phi_P(X_\fk m +X_\fk a)\;dX_\fk m\;dX_\fk a.  
\end{equation}
Recall from \cite{BV1},  $\phi_t(X):=t^{\dim
  g}\phi(t^{-1}X).$ Then
\[
(\phi_t)_P=t^{-\dim\fk n}(\phi_P)_t.
\]
It follows that if the asymptotic expansion of $\Theta$ has leading term
$D_r,$ then the leading term of the
asymptotic expansion of $\Theta_\nu$ is $D_{r}(\phi_P)$, but at
degree $r+\dim\fk n.$ 

\vh 
Write $\fk g=\ovl{\fk n} +(\fk m +\fk a) +\fk n.$ Denote by 
$\C F_{\fk  g}$ and $\C F_{\fk m+\fk a}$ the Fourier transforms with respect to
the Cartan-Killing form of $\fk g$ and the Cartan-Killing form of $\fk
g$ restricted to $\fk m +\fk a$ respectively. 
Formula (\ref{eq:fP}) defines a map $\phi\in
C_c^\infty(\fk g)\longrightarrow \phi_P\in C_c^\infty (\fk m +\fk a)$.

\begin{lemma}
  \[
\C F_\fk g(\phi)_{\ovl{P}}=\C F_{\fk m+\fk a}\big(\phi_{{P}}\big)
\]
\end{lemma}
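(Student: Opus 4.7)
The lemma is a Fourier-theoretic Fubini identity, and I would prove it by directly unpacking both sides and using the two facts that the Cartan–Killing form $B$ restricts nondegenerately to $\fk m+\fk a$ and pairs $\fk n$ perfectly with $\ovl{\fk n}$.

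First I would record the behavior of $B$ with respect to the decomposition $\fk g=\ovl{\fk n}+(\fk m+\fk a)+\fk n$. The subspaces $\fk n$ and $\ovl{\fk n}$ are each $B$-isotropic, $\fk m+\fk a$ is $B$-orthogonal to $\fk n\oplus\ovl{\fk n}$, and $B|_{\fk n\times\ovl{\fk n}}$ and $B|_{(\fk m+\fk a)\times(\fk m+\fk a)}$ are nondegenerate. Writing $X=X_{\ovl{\fk n}}+X_{\fk m+\fk a}+X_\fk n$ and $Y=Y_{\ovl{\fk n}}+Y_{\fk m+\fk a}+Y_\fk n$,
\[
B(X,Y)=B(X_\fk n,Y_{\ovl{\fk n}})+B(X_{\fk m+\fk a},Y_{\fk m+\fk a})+B(X_{\ovl{\fk n}},Y_\fk n).
\]
It is this splitting that lets the same Killing form serve simultaneously for $\C F_\fk g$ and for $\C F_{\fk m+\fk a}$.

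Second, I would reduce to the case of a $K$-invariant $\phi$. By $\Ad$-invariance of $B$, $\C F_\fk g$ commutes with the action of $K$; the subscript operations on both sides already incorporate an averaging over $K$. Hence both sides depend only on $\wti\phi(X):=\int_K\phi(\Ad k\cdot X)\,dk$, and we may as well assume $\phi=\wti\phi$ and drop the $K$-average.

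Third, the LHS is computed by a direct Fubini. From the kernel splitting,
\[
\C F_\fk g(\phi)(Y_{\fk m+\fk a}+Y_{\ovl{\fk n}})=\int_\fk g \phi(X)\,e^{-iB(X_{\fk m+\fk a},Y_{\fk m+\fk a})}\,e^{-iB(X_\fk n,Y_{\ovl{\fk n}})}\,dX,
\]
and swapping order the inner integral $\int_{\ovl{\fk n}}e^{-iB(X_\fk n,Y_{\ovl{\fk n}})}\,dY_{\ovl{\fk n}}$ is (up to a constant fixed by the Haar normalizations) the Dirac delta at $X_\fk n=0$, because $B$ is a perfect pairing $\fk n\times\ovl{\fk n}\to\bC$. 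Collapsing the $X_\fk n$-integration leaves
\[
\int_{\ovl{\fk n}\oplus(\fk m+\fk a)}\phi(X_{\ovl{\fk n}}+X_{\fk m+\fk a})\,e^{-iB(X_{\fk m+\fk a},Y_{\fk m+\fk a})}\,dX_{\ovl{\fk n}}\,dX_{\fk m+\fk a},
\]
which is exactly $\C F_{\fk m+\fk a}$ applied to $X_{\fk m+\fk a}\mapsto\int_{\ovl{\fk n}}\phi(X_{\ovl{\fk n}}+X_{\fk m+\fk a})\,dX_{\ovl{\fk n}}$.

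Fourth, and this is the step I expect to be the main obstacle, one must match the function just obtained with $\phi_P$, which integrates over $\fk n$ rather than $\ovl{\fk n}$. Here the complex-group framework of the paper is crucial: there is a representative $w\in K$ of the longest element of $W(G,H)/W(M,H)$ satisfying $\Ad(w)\fk n=\ovl{\fk n}$ and normalizing $\fk m+\fk a$. Using $K$-invariance of $\phi$, the change of variables $X_{\ovl{\fk n}}=\Ad(w)X_\fk n$ identifies $\int_{\ovl{\fk n}}\phi(X_{\ovl{\fk n}}+X_{\fk m+\fk a})\,dX_{\ovl{\fk n}}$ with $\int_\fk n\phi(X_\fk n+\Ad(w^{-1})X_{\fk m+\fk a})\,dX_\fk n$, and the leftover Weyl action on the $\fk m+\fk a$ argument is absorbed by the $K$-invariance of $\phi$. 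This yields $\phi_P$ on the nose and completes the proof.
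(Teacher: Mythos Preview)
The paper states this lemma without proof, so there is no argument to compare against; I can only assess your proposal on its own merits.

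Your steps 1--3 are the right approach and are carried out correctly: the Killing form splits as you say, the $K$-average may be absorbed into $\phi$, and the Fubini/delta computation is clean. However, what that computation actually produces is
\[
\C F_\fk g(\phi)_{\ovl P}\;=\;\C F_{\fk m+\fk a}\!\left(z\mapsto \int_{\ovl{\fk n}}\phi(u+z)\,du\right)\;=\;\C F_{\fk m+\fk a}\big(\phi_{\ovl P}\big),
\]
i.e.\ the \emph{same} parabolic on both sides. (Symmetrically, the identical argument gives $\C F_\fk g(\phi)_{P}=\C F_{\fk m+\fk a}(\phi_{P})$.)

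The gap is in step 4. Your change of variables and use of $K$-invariance of $\phi$ correctly give
\[
\int_{\ovl{\fk n}}\phi(u+z)\,du\;=\;\phi_P\big(\Ad(w^{-1})z\big),
\]
but the assertion that ``the leftover Weyl action on the $\fk m+\fk a$ argument is absorbed by the $K$-invariance of $\phi$'' does not follow. $K$-invariance of $\phi$ does not force $\phi_P$ to be $\Ad(w)$-invariant as a function on $\fk m+\fk a$: the element $w$ acts nontrivially on $\fk m+\fk a$, and nothing in the construction of $\phi_P$ makes it insensitive to that action. So as written you have proved $\C F_\fk g(\phi)_{\ovl P}=\big(\C F_{\fk m+\fk a}\phi_P\big)\circ\Ad(w^{-1})$, not the stated identity.

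Two remarks. First, the lemma as printed is very likely a typo for the ``same parabolic on both sides'' version, which your steps 1--3 already prove. Second, for the only use made of the lemma in the paper---computing $AS(\theta_\nu)$ by pairing with $MA$-invariant nilpotent orbital measures---the extra $\Ad(w)$ twist is harmless, so the corollary that follows goes through either way. But as a proof of the lemma exactly as stated, step 4 does not close.
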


\vh
Recall from \cite{BV1} that the leading term of a character
$AS(\Theta)$ is a combination of Fourier transforms of Liouville
measures of nilpotent orbits, $AS(\Theta)=\sum c_j\wht{\mu(\C O_j)}.$  
We call $AS(\Theta)$ the \textbf{asymptotic cycle} of $\Theta.$ 
\begin{definition}
Let $D$ be a tempered $MA-$invariant homogeneous distribution. Denote
by $\Ind_P^G D$ the distribution 
\[
\Ind_P^G[D](\phi):=D(\phi_P).
\]  
\end{definition}
When $D$ is the invariant measure of a nilpotent orbit $\C O_\fk
m\subset\fk m,$ $\Ind_P^G D$ is a combination of invariant measures
supported on nilpotent orbits of $\fk g.$

\begin{corollary} Using the notation $\theta$ for the character of
  $(\pi_M,\C H)$ and $\theta_\nu$ for the induced character, 
suppose $AS(\theta)=\sum c_j \C F_{\fk m+\fk a}(\mu(\CO_{j,\fk m}))$. Then
$AS(\theta_\nu )= \sum c_j \C F_\fk g\big(\Ind_P^G[\mu(\C O_{j,\fk m})]\big).$
\end{corollary}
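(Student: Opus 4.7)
The plan is to combine the three ingredients lined up just before the statement: (i) the integral formula (\ref{eq:ind2}) for $\theta_\nu(\phi)$; (ii) the scaling identity $(\phi_t)_P = t^{-\dim\fk n}(\phi_P)_t$, which together with (i) gives that the leading asymptotic term of $\theta_\nu(\phi)$ is $D_r(\phi_P)$ when the leading term of $\theta$ is $D_r$; and (iii) the Fourier transform lemma $\C F_\fk g(\phi)_{\ovl P} = \C F_{\fk m+\fk a}(\phi_P)$. Once these are in hand, the corollary is essentially Fourier duality plus the definition of $\Ind_P^G$.

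First I would substitute $\phi\mapsto \phi_t$ in (\ref{eq:ind2}) and apply (ii). The analytic factors $e^{(\rho+\nu)(X_\fk a)}$ and $\xi_N^{-1}$ both take the value $1$ at the origin and contribute only lower-order corrections, so the leading term is $AS(\theta)(\phi_P)$; invoking the hypothesis on $AS(\theta)$ then gives
\[
AS(\theta_\nu)(\phi) \;=\; \sum_j c_j\,\C F_{\fk m+\fk a}\bigl(\mu(\CO_{j,\fk m})\bigr)(\phi_P).
\]
Next, transfer the Fourier transform from $\mu(\CO_{j,\fk m})$ to $\phi_P$ by self-duality of the tempered pairing, and use (iii) to rewrite $\C F_{\fk m+\fk a}(\phi_P) = \C F_\fk g(\phi)_{\ovl P}$. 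The right-hand side becomes
\[
\sum_j c_j\,\mu(\CO_{j,\fk m})\bigl(\C F_\fk g(\phi)_{\ovl P}\bigr) \;=\; \sum_j c_j\,\Ind_{\ovl P}^G\bigl[\mu(\CO_{j,\fk m})\bigr]\bigl(\C F_\fk g(\phi)\bigr)
\]
by the definition of $\Ind_{\ovl P}^G$, and one more application of Fourier duality produces $\C F_\fk g\bigl(\sum_j c_j\,\Ind_{\ovl P}^G[\mu(\CO_{j,\fk m})]\bigr)(\phi)$.

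The main obstacle is the final identification $\Ind_{\ovl P}^G[\mu(\CO_{j,\fk m})] = \Ind_P^G[\mu(\CO_{j,\fk m})]$. The support (an induced nilpotent in $\fk g$) depends only on $M$ and $\CO_{j,\fk m}$, not on the choice of positive system, so the two distributions are at least supported on the same union of $G$-orbits; the delicate point is matching the Liouville measures on each component. This is standard but requires bookkeeping with normalizations, and is where essentially all the technical work lies.
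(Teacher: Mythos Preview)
The paper states this corollary without proof, treating it as an immediate consequence of the preceding setup, so your unpacking is exactly the intended argument: combine the scaling identity to get $AS(\theta_\nu)(\phi)=AS(\theta)(\phi_P)$, then push the Fourier transform through via the lemma $\C F_\fk g(\phi)_{\ovl P}=\C F_{\fk m+\fk a}(\phi_P)$ and read off the definition of the induced distribution.

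Your flagging of the $P$ versus $\ovl P$ discrepancy is accurate and is the only substantive point. The paper does not resolve it in this subsection; instead the very next subsection (\S\ref{S:13}) defers the comparison of Liouville measures under induction to \cite{B3}, which is where the normalization bookkeeping lives. For the applications that follow, the paper immediately specializes to complex groups (\S\ref{S:cx}), where $P$ and $\ovl P$ are $K$-conjugate by a Weyl group representative normalizing $MA$; since the $K$-average is already built into $\phi\mapsto\phi_P$ and nilpotent $M$-orbits in a complex Levi are stable under this conjugation, the two induced distributions coincide on the nose. So you have not overstated the issue, but in the paper's context it is either absorbed into the reference \cite{B3} or rendered trivial by the complex hypothesis, rather than requiring a separate argument here.
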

\subsection{Relation between AC and AS}
\label{S:12}
According to results of Schmid-Vilonen \cite{SV2}, the nilpotent $G-$orbits and
$K_\bb C-$orbits in the
formulas for $AC$ and $AS$ correspond via the Kostant-Sekiguchi
correspondence, and $c_i=\dim\chi_i.$ 

\subsection{}\label{S:13}
The comparison between the Liouville measures of induced nilpotent
orbits and the inducing data is done in \cite{B3}. The
analysis of the distributions $\theta(f_P)$ is done in \cite{B4}
formula (8.3). Let $AS(\pi_M)=\sum_j c_j\C O_{j,\fk m}$. 
For each orbit $\C O_{j,\fk m}$ write $v_{ij}+X_{ij}$
for representatives of the orbits intersecting $\C O_{j,\fk m}+\fk n$
in open sets. Let $C_G(v_{ij})$ and $C_P(v_{ij})$ be the
centralizers. Then
\begin{equation}
  \label{eq:acformula}
  AS(\Ind_P^G \pi_M)=\sum_{i,j}
  c_j\bigg|\frac{C_G(v_{ij})}{C_P(v_{ij})}\bigg|\C O_{ij}.
\end{equation}

\section{Complex Groups}\label{S:cx}

\subsection{}\label{S:cg1}
We  specialize the results in section \ref{S:10} to \ref{S:13} 
 to the complex case. The main simplifications are that
 $AC(\pi)=c_\pi\C O_\fk m,$ and there is only one $\C O$ which
 intersects $\C O_{\fk m}+\fk n$ in a dense open set. We use $AC$ for
 both the asymptotic cycle and support identified via the
 Kostant-Sekiguchi correspondence. Formula (\ref{eq:acformula}) becomes
 \begin{equation}
   \label{eq:accx}
   AC(\Ind_P^G [\pi])=c_\pi\bigg|\frac{C_G(v)}{C_P(v)}\bigg|\C O.
 \end{equation}

\subsection{Proof of theorem \ref{t:5.10}}\label{S:cg4}
Since these results are clear for type A, we deal with types B,
C, D only.  We use the notation and parametrization in section \ref{sec:upar}.
Character identities are consequences of theorem III in \cite{BV2} and
its applications as detailed in \cite{B1}.

Assume first that $\C O$ is triangular corresponding to
$\{e,h,f\}$. Let $\{e^\vee,h^\vee,f^\vee\}$ be the dual nilpotent orbit in
$\vee \fk g.$ Let $M(h)$ be the centralizer of $h,$ $M(h^\vee)\subset G$
the centralizer of $h^\vee.$  By section 9 in \cite{BV2} on triangular
nilpotent orbits,  
every unipotent representation is induced irreducible from a character
of $\chi$ of $M(h).$ Parametrize the representations by these characters,
$\chi\in\widehat{M(h)}\longleftrightarrow X_\chi.$ From \cite{BV2}, the
passage from this parametrization to the one given by characters of the
component group of the dual nilpotent orbit is known explicitly. By
\cite{V1},
$$
X_\chi\mid_K=R(\C O)_{\rho(\chi)}-Y_\chi
$$
where $Y_\chi$ is a genuine $K-$module.  $\rho(\chi)$ is a
representation of the component group of the centralizer of $e,$
trivial on the unipotent radical because it is algebraic. Since
$\dim\rho(\chi)$ is also the multiplicity, it follows that
$\rho(\chi)$ must be $1-$dimensional. Since the reductive part of the
centralizer of $e$ is a product of classical groups, $\rho(\chi)$ is
trivial on the connected component. Thus $\rho(\chi)$ is a character
of $A(\C O).$ 

On the other hand, again by \cite{BV2},
$$
\Ind_{M(h^\vee)}^G[Triv]=\sum X_\chi.
$$ 
Using Proposition \ref{p:induced}, we get an identity 
$$
\sum R(\C O)_\psi=\sum X_\chi\mid_K=\sum R(\C O)_{\rho(\chi)}-\sum Y_{\chi}.
$$
It follows that 
$$
X_\chi\mid_K=R(\C O)_{\rho(\chi)}.
$$
It is clear that if $X_\chi$ is the spherical unipotent
representation, then $\rho(\chi)=Triv.$ 

Now let $\C O$ be a {\it special stably trivial} nilpotent orbit,
$\CO\subset\fg(n).$ The results in \cite{BV2} imply that there is a 1-1 correspondence
between characters of $A(\C O)$ and unipotent representations. 
Choose an arbitrary
parametrization of the unipotent representations  by characters of
$A(\C O)$, the trivial character should correspond to the spherical
module. As before, for each unipotent representation $X_\nu,$ there is a
representation $\rho(\nu)$ of the full centralizer of $e\in\C O,$ such that  
\begin{equation}
  \label{eq:6.3.1}
X_\nu=R(\CO)_{\rho(\nu)}- Y_\nu,  
\end{equation}
with  $Y_\nu$  a genuine $K$-module. 

Let $\fm=\fg(n)\times gl(k_1)\times\dots\times gl(k_r)$
be a Levi component of a parabolic subalgebra in
$\fg^+:=\fg(n + k_1+\dots +k_r).$ There are $k_1,\dots ,k_r$ such
that the orbit 
\begin{equation}\label{6.3}
\CO^+ = Ind_{\fm}^{\fg^+} [\CO\times triv\times \dots \times triv]
\end{equation} 
is {\it triangular.} 
Inducing $X_\nu$ up to $\fk g^+,$ and using the decomposition formulas
for such modules from \cite{BV2} combined with Propositions
\ref{p:5.5} and \ref{p:induced}, we
conclude as before that $\Ind Y_\nu=0$ so $Y_\nu=0,$ and the 
multiplicity of $X_\nu$ is 1. 
Thus $\rho(\nu)$ is a character of the component group $A(\C O),$ 
and counting occurences in the induced modules, we conclude that the
correspondence $\nu\longleftrightarrow\rho(\nu)$ is 1-1.    
In other words, there is a
parametrization $\nu\longleftrightarrow X_\nu$ such that 
\[
X_\nu=R(\CO)_\nu.
\]
\subsection{The correspondence $\psi\longleftrightarrow X_\psi$}
\label{sec:correspondence}
We  give details for  type C; the other types are similar. From section \ref{sec:upar} we know that the unipotent
representations are indexed by $(\ep_0,\dots,\ep_k),$ with $\ep_j=\pm
$ one for each pair of columns $(c_{2j},c_{2j+1})$. The component group
also has $k+1$ components,  $A(\CO)\cong\bZ_2^{k+1}$, 
one for each even size of rows. The sizes of
even rows are $(r_0,\dots , r_k).$ A character of $A(\CO)$ is given by an
$(\eta_0,\dots ,\eta_k)$, with $\eta_j=\pm$ according to whether the
character is trivial or not on the corresponding $\bZ_2^j.$ It is
enough to give the correspondence for the cases when all $\eta_i=+$
except for  one $\eta_j=-.$ The matching is that one sets
all the $\ep_s=-$ for the pairs of columns with label larger than or
equal to $j.$
The following Corollary
is key.
\begin{corollary}
  \label{c:orbit}
Let $\fk m\subset \fk g$ be a Levi component, $\CO_\fk m\subset \fk m$
a stably trivial special orbit, and $\CO=\Ind_\fk m^\fk g\CO_\fk m$
also stably trivial special. Then
$$
\Ind_\fk m^\fk g X_{\fk m,\nu}=\sum [\psi\ ;\ \fk \nu]X_{\fk g,\psi}
$$
\end{corollary}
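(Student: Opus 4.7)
The plan is to compare $K$-spectra on both sides using the identification $X_{\fk g,\psi}|_K \cong R(\CO)_\psi$ established in Theorem \ref{t:5.10}, together with the Frobenius/induction identity for regular sections. First I would set $\pi := \Ind_\fk m^\fk g X_{\fk m,\nu}$ and compute its associated cycle using formula \eqref{eq:accx}: since $\CO=\Ind_\fk m^\fk g \CO_\fk m$, the cycle is supported on $\ovl{\CO}$, with multiplicity given by $\dim\nu\cdot |C_G(e)/C_P(e)|$. By the previous analysis and the results of \cite{V}, every irreducible composition factor of $\pi$ with associated variety $\ovl\CO$ and the correct infinitesimal character (which is $\la_\CO$ by Proposition \ref{p:infl} and the construction in Section \ref{sec:upar}) lies in the finite set $\{X_{\fk g,\psi}\}_{\psi\in \widehat{A(\CO)}}$.

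Next I would compute the $K$-spectrum of $\pi$ using Proposition \ref{p:5.5} and Proposition \ref{p:induced}. In view of Theorem \ref{t:5.10} applied to $\fk m$ (where $X_{\fk m,\nu}|_{K\cap M}\cong R(\CO_\fk m)_\nu$), induction by stages for regular sections gives
\[
\pi|_K \;\supseteq\; \sum_{\psi\in \widehat{A(\CO)}}[\psi|_{A_M(e_\fk m)} : \nu]\; R(\CO)_\psi.
\]
This is the content of Proposition \ref{p:5.5}: each vector transforming under $A_M(e_\fk m)$ according to $\nu$ lifts to a vector transforming under $A_G(e)$ according to some $\psi$ with $[\psi|_{A_M}:\nu]>0$, and Frobenius reciprocity groups these together. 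Using Theorem \ref{t:5.10} on the right, the inclusion becomes
\[
\pi|_K \;\supseteq\; \bigoplus_\psi [\psi|_{A_M(e_\fk m)}:\nu]\; X_{\fk g,\psi}\big|_K .
\]

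To get equality, I would match multiplicities in the associated cycle: both sides have the same coefficient in front of $\CO$ by \eqref{eq:accx} combined with the dimensional identity
\[
\sum_\psi [\psi|_{A_M(e_\fk m)}:\nu]\dim\psi \;=\; \dim\nu\cdot \big|A(\CO)/A_M(\CO_\fk m)\big|,
\]
which is Frobenius reciprocity for the component groups and matches $|C_G(e)/C_P(e)|$ in the stably trivial setting. Since the infinitesimal character $(\la_\CO,\la_\CO)$ is fixed and the composition factors are already constrained to the $X_{\fk g,\psi}$, the inclusion of $K$-spectra must be an equality of $(\fk g,K)$-modules in the Grothendieck group, giving
\[
\Ind_\fk m^\fk g X_{\fk m,\nu} \;=\; \sum_\psi [\psi|_{A_M(e_\fk m)}:\nu]\; X_{\fk g,\psi}.
\]

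The main obstacle I anticipate is ruling out ``leakage'' of composition factors: a priori $\pi$ could contain irreducibles whose associated variety is properly contained in $\ovl\CO$ (cf.\ the $Y_\chi$ terms that were eliminated in the proof of Theorem \ref{t:5.10}). Handling this requires the stable triviality hypothesis on both $\CO_\fk m$ and $\CO$, which ensures that the multiplicity count in the associated cycle exactly exhausts $\pi$, leaving no room for boundary contributions; this parallels the argument used in Section \ref{S:cg4} to show $Y_\nu=0$ for stably trivial orbits.
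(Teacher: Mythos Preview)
There are two concrete problems. First, you have the direction of Proposition~\ref{p:5.5} reversed: that proposition gives
\[
[\mu:\Ind_M^G R(\CO_\fk m)_\nu]\;\le\;[\mu:R(\CO)_{\Ind\nu}]=\sum_\psi[\psi:\nu]\,[\mu:R(\CO)_\psi],
\]
so after identifying $X_{\fk m,\nu}|_{K\cap M}\cong R(\CO_\fk m)_\nu$ one obtains $\pi|_K\subseteq\sum_\psi[\psi:\nu]\,X_{\fk g,\psi}|_K$, not the containment $\supseteq$ you wrote. Second, and more seriously, neither direction of this $K$-type inequality, together with the associated-cycle count \eqref{eq:accx}, suffices to rule out composition factors of $\pi$ supported on strictly smaller orbits: such factors are invisible to the top-dimensional multiplicity in $AC(\pi)$, and the basis elements $R(\CO')_\tau$ in $K\C C(\fk g,K)$ for $\CO'\subsetneq\ovl\CO$ share many $K$-types with the $R(\CO)_\psi$, so a $K$-type inequality does not translate into a coefficient inequality in that basis. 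Your appeal to ``stable triviality'' does not address this; stable triviality is the condition $A(\CO)=\ovl{A(\CO)}$ and says nothing directly about absence of boundary constituents in induced modules.

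The paper's argument is different and shorter. The decomposition of $\Ind_\fk m^\fk g X_{\fk m,\nu}$ into irreducibles is a \emph{character identity}, supplied by Theorem~III of \cite{BV2} and its extensions in \cite{B1} (these are the ``decomposition formulas for such modules from \cite{BV2}'' already invoked in the proof of Theorem~\ref{t:5.10}); the multiplicities there are expressed via the component-group representation theory. What remains is only to match the abstract labels in those character formulas with the characters $\psi\in\widehat{A(\CO)}$, and that is exactly what Theorem~\ref{t:5.10} together with the explicit correspondence of Section~\ref{sec:correspondence} accomplishes. In short, the corollary is a consequence of Kazhdan--Lusztig--type character computations plus the $K$-structure labeling, not of a $K$-spectrum comparison on its own; your plan tries to rederive the character identity from the $K$-spectrum side, which the tools at hand (Propositions~\ref{p:5.5} and \ref{p:induced}) are not strong enough to do in this generality.
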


\begin{example}
Consider the nilpotent orbit $\CO=(4422).$ The unipotent
representations are (writing $
\begin{pmatrix}
  \la_L\\\la_R
\end{pmatrix}
$ for the parameter) 
\begin{equation}
  \label{eq:ex24}
  \begin{aligned}
  &\pi(4_+,2_+) &&
  \begin{pmatrix}
    2&1&0&1&;&1&0\\
    2&1&0&1&;&1&0
  \end{pmatrix}\\
  &\pi(4_-,2_+) &&
  \begin{pmatrix}
    2&1&0&1&;&1&0\\
    1&0&-1&-2&;&1&0
  \end{pmatrix}\\
    &\pi(4_+,2_-) &&
  \begin{pmatrix}
    2&1&0&1&;&1&0\\
    2&1&0&1&;&0&-1
  \end{pmatrix}\\
  &\pi(4_-,2_-) &&
  \begin{pmatrix}
    2&1&0 &-1&;&1&0\\
    1&0&-1&-2&;&0&-1
  \end{pmatrix}  
  \end{aligned}
\end{equation}
The labeling $4_\pm,(2_\pm)$ indicates the $\ep$ on the columns of size
$2$ and $4$ respectively.

\vh
Write the rings of regular functions as $R(4^+2^+), R(4^-2^+),
R(4^+2^-), R(4^-2^-).$ Here the $(4^\pm2_\pm)$ indicate the $\ep$ on
the rows of size $4$ and $2$ respectively. 
Note that $\CO=(4422)$ is induced from
$(2211)\times triv$ of $sp(4)\times gl(3)$ and also from $(3322)\times
triv$ of $sp(10)\times gl(1).$  The partitions denote rows.

The composition series are
\begin{equation}
  \label{eq:sp4gl3}
  \begin{aligned}
    &
    \begin{pmatrix}
      2&1&0\\2&1&0
    \end{pmatrix}\otimes triv&&\longrightarrow
    \pi(4_+2_+)+\pi(4_-2_-)\\
&\\
    &R(2^+1)&&\longrightarrow R(4^+2^+)+R(4^+2^-)\\
&\\
    &
    \begin{pmatrix}
      2&1&0\\2&0&-1
    \end{pmatrix}\otimes triv&&\longrightarrow
    \pi(4_+2_-)+\pi(4_-2_+)\\
&\\
    &R(2^-1)&&\longrightarrow R(4^-2^+)+R(4^-2^-)
  \end{aligned}.
\end{equation}
and 
\begin{equation}
  \label{eq:sp10gl1}
  \begin{aligned}
    &
    \begin{pmatrix}
      2&1&0&-1&;&1\\2&1&0&-1&;&1
    \end{pmatrix}\otimes triv&&\longrightarrow
    \pi(4_+2_+)+\pi(4_+2_-)\\
&\\
    &R(32^+)&&\longrightarrow R(4^+2^+)+R(4^-2^+)\\
&\\
    &
    \begin{pmatrix}
      2&1&0&-1;&1\\1&0&-1&-2&;&1
    \end{pmatrix}\otimes triv&&\longrightarrow
    \pi(4_-2_+)+\pi(4_-2_-)\\
&\\
    &R(32^-)&&\longrightarrow R(4^+2^-)+R(4^-2^-)
  \end{aligned}.
\end{equation}
In these formulas, the nilpotent  $(2211)$ was abbreviated as $(21)$ with signs corresponding to the character on the rows of size 2, and $(3322)$ was abbreviated as $(32)$ with signs corresponding to the character on the rows of size 2.

\end{example}

\section{The Kraft-Procesi Model}

\subsection{Basic Setup}
We follow \cite{Bry}. Let $\CO$ be a nilpotent orbit given in terms of
the columns of its partition. Remove a column. The resulting partition
corresponds to a nilpotent orbit in a smaller classical Lie
algebra. The type alternates $C$ and $B/D.$ We get a sequence $(\fk
g_i,K_i)$ of classical algebras such that each $((\fk g_i,K_i),(\fk
g_{i+1},K_{i+1}))$ is a dual pair. Furthermore each pair is
equipped with an oscillator representation $\Om_i$ which gives the
Theta correspondence.  Form the $(\C G,K):=(\fk g_0,K_0)\times \dots
\times (\fk g_\ell,K_\ell))-$module 
$$
\Om:=\bigotimes \Om_i.
$$ 
The representation we are interested in, is the $(\fk g_0,K_0)-$module
$$
\Pi=\Om/(\fk g_1\times\dots\times \fk g_{\ell})(\Om).
$$
Let $(\fk g^1,K^1):=(\fk g_1\times\dots\times\fk
g_{\ell},K_1\times\dots\times K_{\ell}),$ and let $\fk g^1=\fk
k^1+\fk p^1$ be the Cartan decomposition. 

The following facts are standard. 
$\Pi$ is an admissible $(\fk g_0,K_0)-$ module. It has an infinitesimal
character compatible with the 
$\Theta-$correspondence, Proposition \ref{p:infl}.  
Furthermore the $K_i$ which are orthogonal
groups are disconnected, so the nontrivial component group  $\C K^1:=K^1/(K^1)^0$ 
 still acts, and commutes with the action of
$(\fk g_0,K_0)$. Thus $\Pi$ decomposes 
$$
\Pi=\bigoplus \Pi_\Psi
$$ 
where $\Pi_\Psi:=\Hom_{\C K^1}[\Pi,\Psi].$
The characters of $\C K^1$ are in $1-1$ correspondence with the
characters of $A(\C O)$ as in section \ref{sec:correspondence}.
\subsection{The Main Result} In the case of the
representations at the beginning of Section \ref{sec:maintheta}, the
ensuing representations are unipotent. 
Let $\C V:=\prod \Hom[V_i,V_{i+1}].$ This can be identified with a
  Lagrangian. Consider the variety 
$\C Z=\{(A_0,\dots  ,A_{\ell})\}\subset \C V$ given  by the equations 
$$
A_i^\star\circ
  A_i-A_{i+1}\circ A_i^\star=0, \dots ,A_{\ell+1}\circ
  A_{\ell}^*=0,\qquad i=0,\dots ,\ell-1.
$$  
\begin{theorem}[\cite{Bry}]
$\Om$ has a $(\C G,K)$ compatible filtration so that 
$$
gr(\Om/\fk p^1\Om)\cong R(\C Z). 
$$
$K^1$ still acts, and in particular 
$$
gr(\Om/\fk g^1\Om)_{\C K^1}\cong gr(\Om /\fk p^1\Om)_{K^1}\cong R(\ovl{\C O)}.
$$ 
\end{theorem}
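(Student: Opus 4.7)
The plan is to analyze the Bernstein (polynomial-degree) filtration on $\Om$ and to compute the symbols of the $\fk g^1$-action, identifying the symbol ideal with the ideal cutting out $\C Z$. I would proceed in four steps.

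First, realize each $\Om_i$ as the polynomial algebra on a Lagrangian $\C L_i\subset V_i\otimes V_{i+1}$, so that $\Om=\bigotimes_i\Om_i=R(\C V)$ where $\C V=\prod_i\Hom[V_i,V_{i+1}]$, and equip $\Om$ with the total-degree filtration $F_n\Om$. In the Weyl algebra presentation each $X\in\fk g_j$ acts by a quadratic operator, so $\fk g_j\cdot F_n\Om\subset F_{n+2}\Om$ and the symbol $\sigma(X)\in R(\C V)_2$ is well-defined. The standard formula for the moment map of a Type I oscillator representation identifies $\sigma(X)$, for $X\in\fk g_j$ with $1\le j\le\ell$, with the pairing of $X$ against $A_{j-1}A_{j-1}^{\star}-A_j^{\star}A_j$, namely the difference of the two moment maps into $\fk g_j$ arising from the adjacent dual pairs $(G_{j-1},G_j)$ and $(G_j,G_{j+1})$. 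Summing over $j$, the homogeneous ideal $\langle\sigma(\fk p^1)\rangle\subset R(\C V)$ is precisely the ideal of quadratic matrix entries defining $\C Z$.

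Second, the filtered structure gives $\fk p^1\Om$ an induced filtration, and since $\sigma(\fk p^1)$ is concentrated in degree $2$, a standard PBW-type argument yields
\begin{equation*}
gr(\Om/\fk p^1\Om)\;\cong\;R(\C V)\big/\langle\sigma(\fk p^1)\rangle.
\end{equation*}
To upgrade this to $gr(\Om/\fk p^1\Om)\cong R(\C Z)$ one needs the symbol ideal to equal the full radical ideal $I(\C Z)$, i.e.\ that $\C Z$ is reduced and scheme-theoretically cut out by the displayed quadratic equations. This is the principal Kraft--Procesi structural input.

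Third, the group $K^1$ acts compatibly on both sides. Since $\fk g^1=\fk k^1+\fk p^1$ and $\fk k^1$ exponentiates to the identity component of $K^1$, taking $\fk k^1$-coinvariants of $\Om/\fk p^1\Om$ yields $\Om/\fk g^1\Om$, and the further quotient by the component group $\C K^1$ gives
\begin{equation*}
gr(\Om/\fk g^1\Om)_{\C K^1}\;\cong\;gr(\Om/\fk p^1\Om)^{K^1}\;\cong\;R(\C Z)^{K^1}.
\end{equation*}
Finally, the classical Kraft--Procesi theorem identifies $R(\C Z)^{K^1}$ with $R(\ovl{\CO})$: the residual moment map $\C Z\to\fk g_0$ has image $\ovl{\CO}$ and realizes it as the categorical quotient $\C Z/\!/K^1$.

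The hard part will be the radicality step in the second paragraph: $\langle\sigma(\fk p^1)\rangle$ is a priori only contained in $I(\C Z)$, and passing to equality requires the Kraft--Procesi geometric theorem that $\C Z$ is a reduced complete intersection cut out scheme-theoretically by the displayed quadratic moment-map equations. This is a nontrivial normality and dimension statement specific to the orthogonal/symplectic chain built from the columns of a classical-type nilpotent partition, and it is what ultimately enables the oscillator representation to detect $R(\ovl{\CO})$.
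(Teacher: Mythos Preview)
The paper does not give its own proof of this theorem: it is stated with attribution to \cite{Bry} (Brylinski) and used as input for the subsequent Corollary. So there is no in-paper argument to compare your proposal against.

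That said, your outline is a reasonable reconstruction of the standard argument and lines up with how the paper frames the result elsewhere (see the commented-out discussion around formula \eqref{eq:bry} and the Kraft--Procesi references). The identification of the symbol of the $\fk g_j$-action with the moment-map difference $A_{j-1}A_{j-1}^\star-A_j^\star A_j$ is exactly the mechanism the paper has in mind, and you are right that the nontrivial step is the Kraft--Procesi theorem asserting that $\C Z$ is a reduced complete intersection, so that the symbol ideal equals the full ideal $I(\C Z)$ rather than merely being contained in its radical. One small point to watch: your passage from $gr(\Om/\fk p^1\Om)$ to $gr(\Om/\fk g^1\Om)$ by taking $\fk k^1$-coinvariants presumes that passing to the associated graded commutes with taking coinvariants, which needs the semisimplicity of the $K^1$-action (fine here) together with a check that the filtration is $K^1$-stable; you assert this but it deserves a sentence. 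Also note the paper's statement uses coinvariants $(\ )_{K^1}$ throughout, whereas you switch to invariants $R(\C Z)^{K^1}$ at the end; for the reductive $K^1$ acting locally finitely these coincide, but you should say so explicitly.
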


Consider the coinvariants $R(\C Z)_{\fk k^1}.$ Then $\C K^1:=K^1/(K^1)^0$ 
acts, and so we conclude
$$
R(\C Z)_{\fk k^1}=\bigoplus_{\Psi\in\wht{\C K^1}}R(\C Z)_\Psi
$$  
\begin{corollary}
 Assume the nilpotent orbit $\C O$ satisfies the conditions at the
 beginning of Section \ref{sec:maintheta}. Then
$$
\Pi(\C O,\psi)\mid_{K^1}\cong R(\C Z)_{\psi}.
$$
\end{corollary}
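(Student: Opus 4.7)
The plan is to derive the corollary as a direct consequence of Brylinski's theorem above, by taking $\fk k^1$-coinvariants and then decomposing under the finite component group $\C K^1 = K^1/(K^1)^0$. First I would observe that, since $\fk g^1 = \fk k^1 + \fk p^1$ is the Cartan decomposition, one has the identification of $(\fk g_0,K_0)$-modules
$$
\Pi \;=\; \Om/\fk g^1\Om \;\cong\; \bigl(\Om/\fk p^1\Om\bigr)_{\fk k^1},
$$
because $\fk g^1\Om = \fk k^1\Om + \fk p^1\Om$, so quotienting first by $\fk p^1\Om$ and then by the residual $\fk k^1$-action yields $\Om/\fk g^1\Om$. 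The $(\C G,K)$-compatible filtration of Brylinski's theorem passes to the quotient $\Om/\fk p^1\Om$, and $\fk k^1$ acts semisimply and preserves the filtration, so taking $\fk k^1$-coinvariants commutes with passing to the associated graded. This gives
$$
gr(\Pi) \;\cong\; R(\C Z)_{\fk k^1}
$$
as $(\fk g_0,K_0)$-modules, with a residual action of the finite group $\C K^1$ commuting with $(\fk g_0, K_0)$.

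Next I would decompose under $\C K^1$. Since $\C K^1$ is finite and acts on both sides commuting with $(\fk g_0,K_0)$, the isotypic decomposition is preserved by the isomorphism $gr(\Pi) \cong R(\C Z)_{\fk k^1}$, yielding
$$
gr(\Pi_\Psi) \;\cong\; R(\C Z)_\Psi
$$
as $K_0$-modules for every $\Psi \in \widehat{\C K^1}$. Under the matching of section \ref{sec:correspondence} between $\widehat{\C K^1}$ and $\widehat{A(\CO)}$, writing $\psi$ for the corresponding character of $A(\CO)$, this reads $gr(\Pi(\CO,\psi)) \cong R(\C Z)_\psi$. Since passing to the graded module for an admissible $(\fk g_0,K_0)$-module preserves the $K_0$-structure (by Proposition 2.2 of \cite{V1}), we obtain
$$
\Pi(\CO,\psi)\mid_{K_0} \;\cong\; R(\C Z)_\psi
$$
as $K_0$-modules, which is the asserted isomorphism.

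The main obstacle will be justifying carefully that coinvariants and the $\C K^1$-isotypic decomposition commute with the filtration used in Brylinski's theorem; concretely, one needs the filtration on $\Om$ to be simultaneously stable under $\fk p^1$, under the integrated action of $K^1$ (so that $\fk k^1$ acts semisimply compatibly with the grading), and under $\C K^1$. This is implicit in Brylinski's construction because the filtration is $(\C G,K)$-equivariant, but writing this down requires unpacking his oscillator-representation filtration and checking that at each filtered piece the $K^1$-module structure is finite-dimensional and completely reducible. A secondary technical point, less serious, is the verification that the identification of $\widehat{\C K^1}$ with $\widehat{A(\CO)}$ used to label the $\Psi$ matches the labelling used in the preceding $\Theta$-correspondence statements so that the resulting representation is indeed the one called $\Pi(\CO,\psi)$; this is exactly the matching carried out in section \ref{sec:correspondence}, so invoking it closes the argument.
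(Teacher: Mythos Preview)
Your proposal is correct and follows exactly the approach the paper intends: the corollary is presented as an immediate consequence of Brylinski's theorem (stated just before it), with no separate proof given, and the passage from the trivial isotypic case $gr(\Om/\fk g^1\Om)_{\C K^1}\cong R(\ovl{\CO})$ to an arbitrary $\Psi$-isotypic component is precisely what you carry out. Your observation that the $(\C G,K)$-compatibility of the filtration is what makes coinvariants and the $\C K^1$-decomposition commute with $gr$ is the key point, and it is exactly what the paper is implicitly invoking; note also that the restriction in the displayed statement should be to $K_0$ rather than $K^1$, as you correctly read it.
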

\begin{remark}
$A(\C O)$ does not act on $\ovl{\C O}$, so we cannot identify\newline 
$[R(\C Z)_{\fk k^1}:\psi]$ with $R(\ovl{\C O})$ as in the case $\psi=Id.$ But
$K^1$ does act on $\C Z,$ so that the formula in the Corollary makes
sense. In the cases when $\ovl{\C O}$ is normal, it is reasonable to
conjecture that $R(\C O,\psi)\cong [R(\C Z)_{\fk k^1}:\psi].$ This
would follow from the conjecture that any regular function on the
inverse image of\newline 
$\C O\times\C O_1\times \dots \times\C O_\ell$ is regular on all of $\C Z.$ 

\end{remark}

\section{Beyond the Theta  Correspondence}

\subsection{} We consider the case of the $Spin$ groups of type $B_n\
D_n.$ We are concerned with \textit{genuine} unipotent
representations. In coordinates this means that the $K-$types have
half integer entries only. 

\begin{theorem}
A genuine representation $(\pi,V)$ is unitary  only if it is
induced from a representation $\pi_1\otimes\dots\otimes
\pi_k\otimes\pi_0$ on a Levi component $L=GL(m_1)\times \dots \times
GL(m_k)\times G_0$ where 
\begin{enumerate}
\item the representations $\pi_i$ for $i=1,\dots ,k$ are unitary with
  $1-$dimensional lowest $K-$types $(\mu_i+1/2,\dots ,\mu_i+1/2)$ with
  $\mu_i\in \bb N$, 
\item $\pi_0$ has lowest $K-$type $spin.$
\end{enumerate}
\end{theorem}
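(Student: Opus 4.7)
The plan is to follow the reduction--to--minimal--$K$--type strategy used throughout \cite{B1}, adapted to the \emph{genuine} setting where every $K$--type has all half-integer coordinates. Write $\pi = L(\la_L,\la_R)$ with $\la_L = (\mu+\nu)/2$ and $\la_R = (-\mu+\nu)/2$. Since $\pi$ is genuine, its lowest $K$--type $\mu$ has all entries in $\tfrac12+\bb Z$, so we may write it (after a Weyl group conjugation) in the block form
\begin{equation*}
\mu=\bigl(\,\mu_1+\tfrac12,\dots,\mu_1+\tfrac12\,\bigm|\,\mu_2+\tfrac12,\dots,\mu_2+\tfrac12\,\bigm|\,\cdots\,\bigm|\,\tfrac12,\dots,\tfrac12\,\bigr),
\end{equation*}
with $\mu_1>\mu_2>\dots>\mu_k>0$ and a residual block of entries equal to $\tfrac12$ of length $n_0$.

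The first step is to apply the minimal--$K$--type reduction of Vogan (as used in Section 2 of \cite{B1}): for each strict block of size $m_i$ with constant value $\mu_i+\tfrac12$, the representation $\pi$ is a subquotient of a standard module induced from the Levi $L=GL(m_1)\times\cdots\times GL(m_k)\times G_0$, with inducing data $\chi_1\otimes\cdots\otimes\chi_k\otimes\pi_0$, where $\chi_i$ is a character of $GL(m_i,\bb C)$ with lowest $K$--type $(\mu_i+\tfrac12,\dots,\mu_i+\tfrac12)$, and $\pi_0$ is an irreducible $(\fk g_0,K_0)$--module whose lowest $K$--type has all entries $\tfrac12$. The Zuckerman--Vogan irreducibility criterion, together with the fact that the $\mu_i$ are strictly distinct and positive, will give that the induced module is irreducible, so $\pi$ is \emph{fully} induced from $\chi_1\otimes\cdots\otimes\chi_k\otimes\pi_0$.

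The second step is to propagate unitarity downwards. The invariant Hermitian form on $\pi$ is obtained from the long intertwining operator on the standard module; by the Knapp--Stein and Vogan calculus this operator factors through the intertwiners for each $GL$--block and for $G_0$ separately. Evaluating the signature on a family of petite $K$--types (constructed by applying simple reflections inside a single block, exactly as in the signature computations of \cite{B1}) shows that if any $\chi_i$ or $\pi_0$ fails to be unitary on its factor, then $\pi$ admits a $K$--type of negative length. Hence each $\chi_i$ is a unitary character of $GL(m_i,\bb C)$ with the prescribed $1$--dimensional lowest $K$--type, and $\pi_0$ is unitary with lowest $K$--type $(\tfrac12,\dots,\tfrac12)$, which is precisely the $spin$ representation of $G_0$ (of type $B$ or $D$).

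The main obstacle is the second step: showing that unitarity descends to each inducing factor when the inducing data is not tempered. The reason this is delicate in the genuine setting is that the intertwining operator acts on half--spin $K$--types through a Hecke--algebra avatar that mixes the two spin components under Weyl group reflections, so one cannot simply quote the computation from \cite{B1} for integral $K$--types. The key technical point to verify is that the petite $K$--types supported on a single $GL$--block suffice to separate the unitary from the non-unitary inducing characters, which should follow from the spherical reduction for $GL(m,\bb C)$ applied to the tensor of $\pi_i$ with the $spin$ representation of $G_0$.
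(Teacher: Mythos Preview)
Your proposal follows the same bottom-layer strategy the paper invokes (it cites \cite{Br} for the genuine Spin case and \cite{B1} for the general complex framework), but the logic in step~1 has a gap, and the order of the two steps needs to be reversed.

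You assert that $\Ind_L^G(\pi_1\otimes\cdots\otimes\pi_k\otimes\pi_0)$ is irreducible because the $\mu_i$ are strictly distinct. This does not follow: for real parabolic induction on a complex group, reducibility is controlled by the continuous parameter $\nu$, and there is no Zuckerman--Vogan criterion giving irreducibility from the block structure of $\mu$ alone. A priori $\pi$ is only the Langlands \emph{quotient} of that induced module. The correct order is to run step~2 first: the bottom-layer theorem of Knapp--Vogan identifies the Hermitian form on the bottom-layer $K$-types of $\pi$ (which all occur in the Langlands quotient) with the form on $\pi_L$, so unitarity of $\pi$ forces each $\pi_i$ and $\pi_0$ to be unitary. \emph{Then} $\Ind_L^G(\pi_L)$ is unitarily induced, hence completely reducible; since it has a unique irreducible quotient it must already be irreducible and equal to $\pi$. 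Irreducibility is thus a consequence of the unitarity descent, not an input to it.

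On step~2 itself: the bottom-layer comparison gives the signature matching on the relevant $K$-types directly, without tracking the long intertwining operator root-by-root through a petite $K$-type or Hecke-algebra reduction. The obstacle you flag---half-spin components mixing under simple reflections---is precisely what the bottom-layer formalism sidesteps, since one compares forms on whole $K$-types rather than factoring the intertwiner; this is why the paper defers to \cite{Br} rather than reproducing an intertwining-operator calculation.
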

\begin{proof}
This is a standard \textit{bottom layer} argument. See \cite{Br} for
  this specific case, and \cite{B1} for the more general complex case.
\end{proof}
It is conjectured that the basic cases from which the unitary
dual is obtained via unitary induction and complementary series are
such that $\pi_0$ is unitary, and the infinitesimal character is
integral for a system of type $C_n\times C_n$ for type $B$ (coroots in
the Langlands dual),  and
$D_n\times D_n$  for type $D.$ We therefore concentrate on 
representations with lowest $K-$type $spin.$  The following is a
sharper conjecture about the basic cases, following the parametrization
in \ref{sec:maintheta}. We treat type $B$ in detail, case $D$ is analogous.

\subsection{Type B} The orbit $\CO$ has columns $(m_0',\dots ,m_{2p}')$ 
and let 
\begin{equation}\label{eq:gnilpotent}
(m_0)(m_1,m_2)\dots (m_{2p-1},m_{2p})\qquad m_{2i}=m_{2i+1}+1.
\end{equation}
The columns satisfying  $m'_{2j}=m'_{2j+1}$ were removed. The parameter
\begin{equation}\label{eq:genuine}
\begin{aligned}
m_{2j}\text{ odd }&\longleftrightarrow 
\begin{pmatrix}
\frac{m_{2j}-1}{2}&\dots &1&\frac{-m_{2j}+2}{2}&\dots &\frac{-1}{2}\\
\frac{m_{2j}-2}{2}&\dots &\frac{1}{2}&\frac{-m_{2j}+1}{2}&\dots &-1  
\end{pmatrix}\\
m_{2j}\text{ even }&\longleftrightarrow 
\begin{pmatrix}
\frac{m_{2j}}{2}&\dots &\frac{1}{2}&\frac{-m_{2j}+1}{2}&\dots &\frac{-1}{2}\\
\frac{m_{2j}-1}{2}&\dots &0&\frac{-m_{2j}+1}{2}&\dots &0  
\end{pmatrix}\\
m'_{2j}=m'_{2j+1}&\longleftrightarrow
\begin{pmatrix}
\frac{m_{2j}'}{2}&\dots &\frac{-m_{2j}'}{2}&\frac{-m_{2j}+1}{2}&\dots &\frac{m_{2j}'+1}{2}\\
\frac{m_{2j}'}{2}&\dots &\frac{-m_{2j}'-1}{2}&-m_{2j}'&\dots &\frac{m_{2j}'}{2} 
\end{pmatrix}
\end{aligned}
\end{equation}
is genuine. 
The infinitesimal character is $(\la_\CO,\la_\CO)$, same as in \ref{sec:maintheta},
but arranged so that 
$\disp{
\begin{pmatrix}
\la\\ w\la  
\end{pmatrix}
}$ 
has lowest $K-$type $spin.$  As before, the $m_{2j}'=m_{2j+1}'$
give rise to complementary series, and we concentrate on the case when there
are no such pairs. Note that  the orbit $\CO$
has an arbitrary number of rows of even size, while the odd sized rows
are $1,3,5,\dots ,4k+1$. 

The integral system for this parameter  (the coroots  with
integral inner produc with the prameter) form a system of type $C\times C.$ The corresponding
\textit{endoscopic group} is type $B\times B.$ 

\begin{proposition}\label{p:genuine}
There is a unique genuine parameter with infinitesimal character
$\la_\CO$ given by (\ref{eq:genuine}) and associated cycle a multiple
of  $\CO$ as in  (\ref{eq:gnilpotent}). 
(\ref{eq:genuine}). 
\end{proposition}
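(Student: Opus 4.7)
The plan is to reduce uniqueness to a combinatorial count of Weyl-group cosets, following the same template as Section \ref{sec:upar}.

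First I would invoke Theorem~2.2.1 and write every Langlands parameter with infinitesimal character $(\la_\CO,\la_\CO)$ in the form $\begin{pmatrix}\la_\CO\\ w\la_\CO\end{pmatrix}$ for $w\in W/W_{\la_\CO}$, two such cosets giving isomorphic parameters exactly when they are $W_{\la_\CO}$-conjugate. The genuine condition for type $B$ Spin groups translates to the requirement that $\la_\CO-w\la_\CO$ lie in the coset $(\tfrac12,\dots,\tfrac12)+Q$, where $Q$ is the ordinary $SO$-weight lattice, while the condition that the associated cycle be a multiple of $\CO$ is equivalent, by the standard theory of primitive ideals for complex groups (\cite{BV2}, \cite{B1}), to $\Ann\Pi=\C I_{\la_\CO}$, the maximal primitive ideal attached to $\la_\CO$.

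Next I would pin down the structure of admissible $w$. The coordinates displayed in (\ref{eq:genuine}) show that the integral coroots of $\la_\CO$ form a subsystem of type $C_n\times C_n$, consistent with the identification of the endoscopic group as $B\times B$. Inside this subsystem, \cite{BV2} (and its non-integral extension in \cite{B1}) enumerates the representations with infinitesimal character $\la_\CO$ and annihilator $\C I_{\la_\CO}$; this count matches $|\widehat{A(\CO)}|$ for orbits of the shape (\ref{eq:gnilpotent}), so it is the same finite list that appears in Section \ref{sec:upar}. I would then show that the spin coset $(\tfrac12,\dots,\tfrac12)+Q$ meets this finite set of cosets in exactly one $W_{\la_\CO}$-orbit, yielding the unique genuine parameter, and I would verify that it is represented by the explicit coordinates in (\ref{eq:genuine}).

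The main obstacle is carrying out this last count explicitly, tracking the interaction between the genuineness constraint and the block structure of $\la_\CO$. I would handle it by decomposing the integral system as a product indexed by the pairs $(m_{2i-1},m_{2i})$, and verifying inside each factor that exactly one coset consistent with the annihilator condition lies in the spin coset; this local selection is encoded directly in the two patterns (odd $m_{2j}$ vs.\ even $m_{2j}$) of (\ref{eq:genuine}), while any complementary-series blocks $m'_{2j}=m'_{2j+1}$ contribute a fixed shift independent of $w$ within its stabilizer coset. Assembling the local unique choices produces the global unique genuine parameter, completing the proof.
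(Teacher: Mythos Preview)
Your framework is sound---parametrize by $w\in W/W_{\la_\CO}$, impose the maximal-annihilator condition via \cite{BV2}/\cite{B1}, then intersect with the spin coset---but the key step is underspecified. You assert that the intersection ``meets this finite set of cosets in exactly one $W_{\la_\CO}$-orbit'' and propose to verify it block by block, but you have not identified \emph{why} the genuineness constraint singles out exactly one parameter. The maximal-annihilator condition is a global condition on $w$ coming from Kazhdan--Lusztig cell theory, and while the explicit parameters in Section~\ref{sec:upar} do assemble block by block, it is not obvious that the spin-coset constraint slices through them locally in the way you need without some further input.

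The paper supplies that input via primitive-ideal cells. Working in one $C_n$ factor of the integral system, the condition that the associated cycle be a multiple of $\CO$ forces both the left and the right primitive ideals of the parameter to be maximal. The genuine constraint then places the left part in the Springer cell and the right part in the Lusztig cell attached to the same nilpotent orbit; the point is that these two cells have no Weyl-group representations in common except the special one, and that one occurs with multiplicity~$1$. This cell-theoretic fact is what delivers uniqueness in one stroke, without any block-by-block bookkeeping. Your approach could in principle be pushed through, but it would amount to re-proving this cell intersection statement combinatorially; as written, the proposal names the conclusion without supplying the mechanism.
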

\begin{proof}
We use the generalized Kazhdan-Lusztig conjectures. It is enough to
consider one of the factors, $C_n$ in the integral roots of type
$C_n\times C_n$. The left and right maximal primitive ideals for 
part of $\la_L$ and $w\la_R$ correspond to what are called the Springer
and the Lusztig primitive ideal cell for the same nilpotent
orbit. These do not have any Weyl group representations in common
except for the special one, occurring with multiplicity 1. 
This is the uniqueness of the parameter. The rest of the argument is
as in \cite{B1}.  
\end{proof}

Denote by $A_{Spin}(\CO)$ the component group of the centralizer of an
$e\in\CO$ in the  $Spin-$group. Recall that $A(\CO)=\bb Z_2^{2k}.$
\begin{proposition}\label{p:cgroup} $A_{Spin}(\CO)$ is a nontrivial
  extension of $A(\CO)$ by $\bb Z_2$: 
$$
1\longrightarrow\bb Z_2\longrightarrow A_{Spin}(\CO)\longrightarrow
A(\CO)\longrightarrow 1. 
$$
In particular, $A_{Spin}(\CO)$ has $2^{2k}$ characters lifted from
$A(\CO),$ and one genuine character of degree $2^{k}.$ 

\end{proposition}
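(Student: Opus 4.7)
The plan is to realize $A_{Spin}(\CO)$ explicitly inside $Spin(V)$ as the preimage of $A(\CO)\subset SO(V)$, choose convenient generators, and compute the Clifford-algebra relations among their lifts. The outcome will be that $A_{Spin}(\CO)$ is an extra-special $2$-group of order $2^{2k+1}$, from which both the exactness/non-triviality of the sequence and the character count follow at once.

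First I would fix concrete generators of $A(\CO)$. The reductive part of the centralizer $G(e)\subset O(V)$ is a product $\prod_{\ell\,\text{odd}} O(r_\ell)\times\prod_{\ell\,\text{even}} Sp(r_\ell)$. Under the standing hypothesis that the odd row sizes are $1,3,5,\dots,4k+1$ (each of multiplicity one), the orthogonal factors contribute $\{\pm 1\}^{2k+1}$; intersecting with $SO(V)$ via the product-of-signs (determinant) constraint yields $A(\CO)\cong\bZ_2^{2k}$. Take as generators $\sigma_1,\dots,\sigma_{2k}$ the elements where $\sigma_i$ acts as $-\mathrm{id}$ on the row blocks of sizes $1$ and $2i+1$ and as $+\mathrm{id}$ elsewhere. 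Next, since $G(e)_0=\prod_\ell Sp(r_\ell)$ is simply connected, its preimage in $Spin(V)$ splits as $G(e)_0\times\{\pm 1\}$, so $-1$ does not belong to $\tilde G(e)_0$; hence $|A_{Spin}(\CO)|=2|A(\CO)|=2^{2k+1}$ and the short exact sequence in the proposition holds at the level of orders.

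I then lift the $\sigma_i$'s via the Clifford algebra. Choose a unit vector $v$ spanning the size-$1$ row and an orthonormal basis $w^{(i)}_1,\dots,w^{(i)}_{2i+1}$ of the size-$(2i+1)$ row; set $\omega_i:=w^{(i)}_1\cdots w^{(i)}_{2i+1}$. Since $\sigma_i$ is a product of $2i+2$ orthogonal reflections, a canonical lift is
\begin{equation*}
\tilde\sigma_i\;:=\;v\,\omega_i\;\in\;Spin(V)\subset Cliff^+(V).
\end{equation*}
The main calculation, and the sole obstacle, is the anticommutation
\begin{equation*}
\tilde\sigma_i\,\tilde\sigma_j\;=\;-\,\tilde\sigma_j\,\tilde\sigma_i\qquad(i\ne j).
\end{equation*}
One obtains $\tilde\sigma_i\tilde\sigma_j=-\omega_i\omega_j$ by pushing $v$ past the odd-length string $\omega_i$ (picking up $(-1)^{2i+1}=-1$ and then using $v^2=1$). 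Similarly $\tilde\sigma_j\tilde\sigma_i=-\omega_j\omega_i$. Because every factor of $\omega_i$ is orthogonal to every factor of $\omega_j$, commuting the two strings past each other costs $(2i+1)(2j+1)$ sign flips, and $(2i+1)(2j+1)$ is odd, so $\omega_j\omega_i=-\omega_i\omega_j$. Combining gives the displayed relation.

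Finally I read off the structure and representation theory. The relation $\tilde\sigma_i\tilde\sigma_j\tilde\sigma_i^{-1}\tilde\sigma_j^{-1}=-1$ together with $\tilde\sigma_i^2\in\{\pm 1\}$ identifies $A_{Spin}(\CO)$ as an extra-special $2$-group with center $\{\pm 1\}$. In particular it is non-abelian, so the extension is non-trivial. The commutator subgroup is $\{\pm 1\}$, whence the abelianization equals $A_{Spin}(\CO)/\{\pm 1\}=A(\CO)$; this yields exactly $2^{2k}$ one-dimensional characters, each inflated from $A(\CO)$. The squared dimensions of the remaining irreducibles sum to $2^{2k+1}-2^{2k}=2^{2k}$, and by the standard classification of extra-special $2$-groups there is exactly one such irreducible; it is faithful on $\{\pm 1\}$, i.e., genuine, and has dimension $2^k$. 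This gives both parts of the proposition.
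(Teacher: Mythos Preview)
Your proof is correct and follows essentially the same route as the paper's: both lift generators of $A(\CO)$ to $Spin(V)$ via Clifford-algebra volume elements on the odd Jordan blocks and verify the key anticommutation relation. The paper works with the elements $E_{2r+1}=v\prod(1-e_if_i)/\sqrt{-1}$ in a hyperbolic basis and records $E_{2i+1}E_{2j+1}=-E_{2j+1}E_{2i+1}$, while you use orthonormal bases and the lifts $\tilde\sigma_i=v\,\omega_i$; these are the same elements up to normalization, and the anticommutation computation is identical in spirit. Your write-up is in fact more complete than the paper's on two points: you explicitly argue (via simple connectedness of $\prod Sp(r_\ell)$) that $-1\notin\tilde G(e)_0$, pinning down $|A_{Spin}(\CO)|=2^{2k+1}$, and you spell out why the resulting group is extra-special and hence has exactly one genuine irreducible of dimension $2^k$, whereas the paper stops at the anticommutation relation and leaves the character count implicit.
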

\begin{proof}
Let $(V,Q)$ be a quadratic space of dimension $2r+1$ with a basis
$\{e_i,v,f_i\}$ satisfying $Q(e_i,f_j)=\delta_{ij},$
$Q(e_i,v)=Q(f_j,v)=Q(e_i,e_j)=Q(f_i,f_j)=0$, and $Q(v,v)=-1.$ Let $C(V)$ be the Clifford
algebra with automorphisms $\al$ defined by 
$\al(x_1\dots x_r)=(-1)^r x_1\dots x_r$ and $\star$ given by $(x_1\dots
x_r)^\star=(-1)^r x_r\dots x_1.$ The double cover of $O(V)$ is 
$$
Pin(V):=\{ x\in C(V)\ \mid\ x\cdot x^\star=1,\ \al(x)Vx^\star\subset V\},
$$
and the double cover of $SO(V)$ by the elements in $Pin(V)$ which are
in $C(V)^{even}.$ 
The action of $Pin(V)$ on $V$ is given by $\rho(x)v=\al(x)vx^*.$ The
element $-I\in O(V)$ is covered by 
\begin{equation}
\label{eq:-spin}
\pm E_{2r+1}=\pm v\prod_{1\le i\le r} [(1-e_if_i)/\sqrt{-1}].  
\end{equation}
Suppose $V=V_{2i+1}\oplus V_{2j+1}$ is a quadratic space and direct sum of 
spaces of dimensions $2i+1, 2j+1$ so that  the restriction of the
quadratic form is nondegenerate on each of them. 
Then there are
two such operators, $E_{2i+1}$ and $E_{2j+1}.$ They satisfy the relations 
$$
\begin{aligned}
&E_{2i+1}E_{2j+1}=-E_{2j+1}E_{2i+1}\\
&E_{2r+1}^2=(\sqrt{-1})^{-r}.  
\end{aligned}
$$
Fix an element $\varepsilon\in\C O$. Its action on $V$ can be described in terms of
Jordan blocks. Because $\varepsilon$ is skew with respect to $Q,$ the
action on an odd sized block can be represented by a seqence of arrows 
$$
e_1\longrightarrow e_2\longrightarrow\dots\longrightarrow
e_r\longrightarrow v\longrightarrow f_r\longrightarrow
-f_{r-1}\longrightarrow\dots \longrightarrow (-1)^{r+1} f_1\longrightarrow 0,
$$
where the $e_i, f_j$ are in duality and $v$ has norm $1$. 
The group $A(\CO)$ is generated by even
products of elements each of which act by $-I$ on one of the odd
\textit{Jordan blocks}  of $\CO$, and $+I$ on the others. 
The  inverse image of $A(\CO)$ in $Spin(V)$ is generated by even
products of $\pm E_{2r+1}$ as in (\ref{eq:-spin}). 
\end{proof}

\begin{proposition}\label{t:spinunitary} 
The parameters in (\ref{eq:genuine}) are unitary.
\end{proposition}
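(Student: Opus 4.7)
The plan is to reduce the unitarity claim to known unitarity results for unipotent representations of linear classical groups via the endoscopic decomposition at $\lambda_{\CO}$, then to propagate positivity of the Hermitian form through the Kazhdan-Lusztig identification used in the proof of Proposition \ref{p:genuine}.

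First I would isolate the integral subsystem. By inspection of the coordinates in (\ref{eq:genuine}), the integer-valued coordinates and the half-integer-valued coordinates of $\lambda_{\CO}$ are mutually non-integral, so the integral root subsystem splits as two factors each of type $C$ (as already observed in the excerpt). As in Proposition \ref{p:genuine}, the generalized Kazhdan-Lusztig conjectures then match the genuine parameter $(\lambda_L, w\lambda_R)$ with a Harish-Chandra module on the integrally-defined endoscopic group of type $B\times B$, having integral infinitesimal character on each factor.

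Next I would identify the two factors on the endoscopic side as unipotent representations attached to naturally associated sub-orbits of $\CO$. The associated cycle condition (a multiple of $\CO$) together with the uniqueness in Proposition \ref{p:genuine} forces each factor to be the unique unipotent representation with the matching orbit data. Unitarity of these factors is furnished by Theorem \ref{t:5.10} together with the integrality case of \cite{B1}. Pulling back along the Kazhdan-Lusztig matching then produces a positive definite invariant Hermitian form on the original genuine module, provided the transfer respects signatures.

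The main obstacle is precisely this last step: showing that the Kazhdan-Lusztig / endoscopic transfer preserves signs of the Hermitian form in the genuine, non-integral setting. The standard argument, as in \cite{B1}, is to check this on each basic intertwining operator attached to a non-integral wall of the parameter and to verify that these operators act by $+1$ on the lowest $K$-type (here $spin$) and deform to the endoscopic reduction through unitary parameters. The extra care needed beyond \cite{B1} is bookkeeping for the central $\bb Z_2$ of $A_{Spin}(\CO)$ from Proposition \ref{p:cgroup}: one must verify that the $2^k$-dimensional genuine character of $A_{Spin}(\CO)$ is realized with the correct sign in the formal sum of standard modules, so that the transferred form pairs the genuine lowest $K$-type against itself with positive sign. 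Once these intertwining-operator signs are computed, positivity propagates from the endoscopic unipotent representations back to the parameters in (\ref{eq:genuine}).
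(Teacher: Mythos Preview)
The paper's own proof is a one-line citation to \cite{Br} (Brega's direct treatment of the genuine unitary dual of $Spin(2n,\bC)$), so there is nothing to compare structurally on that side. Your route is genuinely different, and the idea of using the $C\times C$ integral subsystem to reduce to unipotent representations on a $B\times B$ endoscopic group is natural. But there is a real gap at exactly the place you flag.

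The Kazhdan--Lusztig matching invoked in Proposition \ref{p:genuine} is a \emph{character} identity. It transports composition series and associated cycles, which is why it suffices for uniqueness and for identifying the orbit; it does not transport the invariant Hermitian form or its signature. Your proposed fix --- checking signs of basic intertwining operators on the $spin$ lowest $K$-type and deforming to the endoscopic parameter through unitary points --- is not a consequence of the endoscopic reduction; it is an independent direct computation, and in fact it is essentially what \cite{Br} does. Once you commit to verifying those intertwining-operator signatures one wall at a time on the $spin$ $K$-type, the unitarity of the endoscopic factors plays no role: you are proving unitarity of the genuine module from scratch. So the endoscopic step, as written, does no work toward the conclusion.

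A minor point: Theorem \ref{t:5.10} is a $K$-structure statement, not a unitarity statement; the unitarity of the endoscopic unipotent factors you want comes only from \cite{B1}. But even granting that, what you would need to close the argument is a \emph{signature} character identity compatible with the nonintegral translation (in the spirit of later work on signed $K$-multiplicities), and neither the paper nor its references supplies one. Absent that, the honest proof is the direct one in \cite{Br}.
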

\begin{proof}
 See \cite{Br}.
\end{proof}

Consider the special case of $\CO$ with columns $(2m+1,2m).$ The parameter is 
\begin{equation}\label{eq:genparso}
\begin{pmatrix}
\la_L\\ \la_R
\end{pmatrix}
=
\begin{pmatrix}
m,&\dots &1&-1/2,&\dots &-m+1/2\\
m-1/2&\dots &1/2&-1&\dots &-m
\end{pmatrix}
\end{equation}
The orbit $\CO$ has $SL_2-$triple $\{E,h,F\}$ with $h=(1,\dots ,1)$
and  $E$ with Jordan blocks 
$$
e_i\longrightarrow -f_i\longrightarrow 0.
$$
Let $\fk p:=\fk m+\fk n=C_h(0)+C_h(1)+C_h(2)$ be the parabolic subalgebra
corresponding to $h$, where $C_h(i)$ are
the $i-$eigenspaces of $h$. In particular $C_h(0)=\fk m=\cong gl(2m),$
and $\fk n=C_h(1)+C_h(2).$   
The centralizer of $E$ is $C_E=C_E(0)+C_h(1)+C_h(2),$ with
$C_E(0)\cong sp(2m,\bb C)\subset gl(2m)$ embedded in the standard
way. The component group of  the centralizer of $E$  in $SO(4m+1)$ is
trivial, while the centralizer in $Spin( 4m+1),\bb C)$ is $\bb Z_2.$
So there are two characters of $A_{Spin}(\C O)$, $\psi_{triv}$ and
$\psi_{gen}$. 

\begin{proposition} Let $V(\mu)$ denote a $K-$type with highest weight
  $\mu$. 
$$
\begin{aligned}
&R(\C O,\psi_{triv})=\sum V(a_1,a_1,\dots , a_m,a_m),\\
&
R(\C O,\psi_{gen})=\sum V(a_1+1/2,a_1+1/2,\dots , a_m+1/2,a_m+1/2).
\end{aligned}
$$
with $a_1\ge \dots \ge a_m\ge 0.$
\end{proposition}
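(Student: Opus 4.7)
The plan is to resolve $\ovl{\C O}$ via the Richardson induction and compute the $K$-spectrum by Frobenius reciprocity, treating the two characters of $A_{Spin}(\C O)$ in parallel.

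First, I would set up the geometry. The orbit $\C O$ with columns $(2m+1,2m)$ is the nilpotent orbit of partition $(2^{2m},1)$. It is Richardson, induced from the zero orbit on the Levi $M\cong GL(2m,\bb C)$ of the parabolic $P$ attached to $h=(1,\dots,1)$; the nilradical decomposes as an $M$-module as $\fk n\cong V\oplus\Lambda^2 V$ with $V=\bb C^{2m}$. Since the unique odd row size $1$ occurs exactly once, $A_{SO(4m+1)}(\C O)$ is trivial, whence $A_P(e)=1$ and the moment map $\mu:G\times_P\fk n\to\ovl{\C O}$ is birational. With $\ovl{\C O}$ normal (Kraft-Procesi), Proposition \ref{p:induced} combined with the vanishing of higher cohomology on $G/P$ (Theorem 8.15 of \cite{J}) yields, as $K$-modules,
\[
R(\C O,\psi_{triv})\ \cong\ \bigoplus_{k\ge 0}\Ind_P^G\bigl(S^k\fk n^*\bigr).
\]

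Next I would compute the $G$-spectrum explicitly. Using the classical Littlewood-Cauchy identity
\[
S^\bullet(\Lambda^2 V)\ =\ \bigoplus_{a_1\ge\dots\ge a_m\ge 0}\ S_{(a_1,a_1,\dots,a_m,a_m)}(V),
\]
together with $S^\bullet V=\bigoplus_k S^k V$, one obtains the $GL(2m)$-decomposition of $S^\bullet\fk n^*$. Frobenius reciprocity converts $[V(\mu):\Ind_P^G(S^k\fk n^*)]$ into $\dim\Hom_M\bigl(V(\mu)^{\fk n},S^k\fk n^*\bigr)$, and the Pieri/Kostant description of the $M$-module $V(\mu)^{\fk n}$ for the maximal Levi $GL(2m)\subset SO(4m+1)$ (the isotropic Grassmannian $G/P\cong OG(2m,4m+1)$) pins down the $SO(4m+1)$-types that contribute: exactly the $V(a_1,a_1,\dots,a_m,a_m)$ for $a_1\ge\dots\ge a_m\ge 0$, each with multiplicity one. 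This gives the first formula.

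For the genuine character, the argument lifts to the spin cover. By Proposition \ref{p:cgroup} the preimage $\wti P$ of $P$ in $Spin(4m+1,\bb C)$ is a nontrivial $\bb Z_2$-extension, and $\psi_{gen}$ extends to a genuine character of $\wti P$ whose restriction to $\wti M$ is the spin character $(\det V)^{1/2}$. The associated genuine line bundle $\C L_{spin}$ on $G/P$ twists the previous computation, and the analogous Frobenius argument for $\C L_{spin}\otimes(G\times_P S^\bullet\fk n^*)$ shifts each coordinate of every surviving weight by $1/2$, producing $V(a_1+1/2,a_1+1/2,\dots,a_m+1/2,a_m+1/2)$. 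Equivalently, this representation is the $\Theta$-lift of the trivial representation of $Sp(4m,\bb C)$ through the metaplectic oscillator for the dual pair $(O(4m+1,\bb C),Sp(4m,\bb C))$, consistent with Proposition \ref{p:basic}(2)(b).

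The main obstacle is the combinatorial matching in the middle step: verifying that among the $GL(2m)$-components appearing in $V(\mu)^{\fk n}$, precisely those indexed by partitions of even column length pair nontrivially with $S^\bullet(\Lambda^2 V)$, and each does so with multiplicity one. This is a classical invariant-theory/branching computation; alternatively, it can be extracted from the Howe-duality formula for the pair $(O(4m+1,\bb C),Sp(4m,\bb C))$, where the trivial-$Sp$ isotype of the oscillator realizes exactly the doubled-weight $Spin$-types.
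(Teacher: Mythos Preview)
Your setup contains a genuine error that breaks the argument. The orbit $\CO$ with partition $(2^{2m},1)$ is \emph{not} Richardson from the parabolic $P$ with Levi $GL(2m)$. A generic element of $\fk n=\fk g(1)\oplus\fk g(2)$ has Jordan type $(3,2^{2m-2},1^2)$, not $(2^{2m},1)$; equivalently, $\dim\CO=4m^2$ while the Richardson orbit from $P$ has dimension $2\dim\fk n=4m^2+2m$. Hence $G\times_P\fk n$ resolves the closure of the larger orbit $(3,2^{2m-2},1^2)$, and Proposition~\ref{p:induced} applied here computes $R$ of the wrong variety. The extra factor $S^\bullet V$ coming from $\fk g(1)\cong V$ in your decomposition is precisely what would produce the surplus $K$-types.

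The paper's argument avoids any resolution. It uses directly that $R(\CO,\psi)=\Ind_{G(e)}^G\psi$, so by Frobenius reciprocity $[V(\mu):R(\CO,\psi)]=\dim V(\mu)^{G(e),\psi}$. The key structural fact (stated just before the proposition) is that the unipotent radical of $G(e)$ is \emph{all} of $N$: indeed $\fk g^E=\fk g(0)^E\oplus\fk g(1)\oplus\fk g(2)$ since $\fk g(i)=0$ for $|i|\ge 3$, and $\fk g(0)^E\cong sp(2m)$ is the reductive part, leaving $\fk g(1)\oplus\fk g(2)=\fk n$ as the unipotent part. Thus $V(\mu)^{G(e)}=(V(\mu)^N)^{Sp(2m)}$. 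Kostant's theorem identifies $V(\mu)^N$ with the irreducible $GL(2m)$-module $F_\mu$ of highest weight $\mu$, and Helgason's theorem for the symmetric pair $(GL(2m),Sp(2m))$ says $F_\mu^{Sp(2m)}$ is one-dimensional precisely when $\mu=(a_1,a_1,\dots,a_m,a_m)$. The genuine case follows by the same reasoning with the spin twist.

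Your approach can be repaired: replace $\fk n$ by $\fk g_{\ge 2}=\fk g(2)\cong\Lambda^2 V$, which is the correct vector bundle in the moment-map resolution $G\times_{P}\fk g_{\ge 2}\to\ovl{\CO}$ (cf.\ \S\ref{S:2} and \cite{McG}). Then only the Cauchy factor $S^\bullet(\Lambda^2 V)$ survives, and your combinatorics goes through. But this is more circuitous than the two-line Kostant--Helgason argument.
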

\begin{proof}
Kostant's theorem implies that the $\fk n$ fixed vectors of
$V(\mu_1,\dots ,\mu_{2m})$ are the $gl(2m,\bb C)-$module  generated by
the highest weight. The vectors fixed by $sp(2m ,\bb C)$ follow by
Helgason's theorem. 
\end{proof}

\begin{corollary}
$$
\begin{aligned}
&X(\CO,triv)\mid_K\cong R(\CO,\psi_{triv}),
&X(\CO,gen)\mid_K\cong R(\CO,\psi_{gen})
\end{aligned}
$$
\end{corollary}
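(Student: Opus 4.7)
The first assertion follows at once from Theorem \ref{t:sph}. By construction, $X(\CO,triv)$ is the spherical unipotent module $L_{triv}$ at infinitesimal character $\la_\CO$, so Theorem \ref{t:sph} yields $X(\CO,triv)\mid_K\cong R(\CO)$. For the orbit with columns $(2m+1,2m)$ the unique odd row (size one) forces $A(\CO)=1$ inside $SO(4m+1,\bC)$, so the entire ring $R(\CO)$ coincides with its $\psi_{triv}$-isotypic piece $R(\CO,\psi_{triv})$.

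For the second assertion, the plan is to realize $X(\CO,gen)$ as a subquotient of an induced module from the $Spin$-preimage of $P$ and to compute its $K$-spectrum by Frobenius reciprocity. The parabolic $\fp=\fm+\fn$ with Levi $\fm\cong gl(2m)$, corresponding to the middle element $h=(1,\dots,1)$, induces $\CO$ from the zero orbit on $\fm$. Reading the Langlands parameter (\ref{eq:genparso}) on this Levi, it is the parameter of a constituent of $\Ind_{\widetilde P}^{Spin(4m+1)}[\chi_{gen}\otimes\bC_\nu]$, where $\widetilde P$ is the $Spin$-preimage of $P$, $\chi_{gen}$ is the genuine character of the $Spin$ double cover of $GL(2m)$ with $\mu$-parameter $(1/2,\dots,1/2)$, and $\nu$ is the continuous parameter read off $\la_L+\la_R$. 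Proposition \ref{p:genuine} guarantees that $X(\CO,gen)$ is the unique genuine constituent with associated cycle $\CO$.

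To match $K$-types I would adapt the argument in the proof of Theorem \ref{t:5.10} to the $Spin$ setting. The kernel $\bZ_2$ of $Spin\to SO$ lies inside the centralizer of $e$ in $\widetilde P$: indeed, formula (\ref{eq:-spin}) applied to the single odd Jordan block of $\varepsilon$ (of size $2m+1$) gives the nontrivial element of $A_{Spin}(e)$, and since $h$ is nonnegative on that block the corresponding element sits inside $\widetilde P$. Thus $A_{P,Spin}(e)=A_{Spin}(e)=\bZ_2$, and the $Spin$-genuine analogue of Proposition \ref{p:induced} gives
\[
[V(\mu):\Ind_{\widetilde P}^{Spin(4m+1)}[\chi_{gen}\otimes\bC_\nu]]
=[V(\mu):R(\CO,\psi_{gen})].
\]
The right-hand side is computed in the preceding proposition via Kostant's theorem and the genuine Helgason identity for $\widetilde{Sp(2m)}\subset\widetilde{GL(2m)}$. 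Combining this with Proposition \ref{p:genuine} (no extra constituents with associated cycle $\CO$) and Theorem \ref{t:spinunitary} (unitarity, which rules out any leftover piece supported on smaller orbits), one concludes $X(\CO,gen)\mid_K\cong R(\CO,\psi_{gen})$.

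The main obstacle is formulating and verifying the double-cover version of Proposition \ref{p:induced}: identifying $A_{P,Spin}(e)$, setting up Frobenius reciprocity in the genuine setting, and checking the corresponding genuine Helgason theorem for $\widetilde{Sp(2m)}\subset\widetilde{GL(2m)}$. Given Proposition \ref{p:cgroup} and the explicit description in (\ref{eq:-spin}) this is essentially bookkeeping, but it must be set up with care.
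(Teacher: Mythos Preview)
Your treatment of the first identity is fine: invoking Theorem~\ref{t:sph} is a legitimate alternative to the paper's use of the Theta lift from the trivial representation of $Sp(2m,\bC)$, and your observation that $A(\CO)=1$ in $SO(4m+1)$ (single odd row) is correct.

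For the second identity your approach differs from the paper's and has a genuine gap. The paper does not go through the Richardson parabolic at all. Instead it writes the character of $X(\CO,gen)$ as the alternating sum
\[
X(\CO,gen)=\sum_{w\in W(B_m\times B_m)}\ep(w)\,X\big(w\cdot(\la_L,\la_R),(\la_L,\la_R)\big),
\]
restricts to $K$, and recognizes this (via induction in stages) as the formula for $\Ind_{S[Pin(2m+1)\times Pin(2m)]}^{Spin(4m+1)}[Spin\otimes Spin]$. The point is that $Spin\otimes Spin$ is a \emph{fine} $K$-type for the relevant cover of $SO(2m+1,2m)$, so this induced $K$-character is computed exactly, with no error term, by McGovern's model-algebra argument. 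That is what matches $R(\CO,\psi_{gen})$ directly.

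Your route instead produces the equality
$\Ind_{\widetilde P}^{Spin(4m+1)}[\chi_{gen}\otimes\bC_\nu]\big|_K = R(\CO,\psi_{gen})$
(this part is indeed just Frobenius reciprocity plus the preceding proposition, not a delicate cohomology-vanishing statement), but then you must pass from the full induced module to its subquotient $X(\CO,gen)$. Proposition~\ref{p:genuine} tells you the other constituents have strictly smaller associated variety, but it does not tell you they are absent. Your appeal to unitarity does not close this: unitarity of $X(\CO,gen)$ says nothing about the existence of additional composition factors of the (non-unitarily) induced module, and Vogan's formula $X|_K=R(\CO,\psi)-Y$ already has $Y\ge 0$, so no positivity is gained. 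To make your argument work you would need to prove that the degenerate principal series $\Ind_{\widetilde P}^{Spin(4m+1)}[\chi_{gen}\otimes\bC_\nu]$ is irreducible, which is a separate (and nontrivial) fact. The paper's fine-$K$-type computation sidesteps this entirely.
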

\begin{proof}
The first identity follows from the Theta Correspondence,
$X(\CO,triv)$ matches the trivial representation  on $Sp(2m,\bb C).$
It also follows from the arguments in \cite{McG1}. An extension of
this argument implies the second identity, noting that
$Spin\otimes Spin$ is a fine $K-$type for the appropriate  cover of
$So(2m+1,2m).$  In more detail, the  character formula for
$X(\CO,gen)$ is
$$
X(\CO,gen)=\sum_{w\in W(B_n\times B_n)}\ep(w)X(w\cdot(\la_L,\la_R),(\la_L,\la_R)). 
$$ 
Using induction in stages and restricting to $K,$ this matches the
formula for induction from $Spin\otimes Spin$ to $S[Pin(2m+1)\times Pin(2m)]$ to
$Spin(4m+1)$. Then pass to the real form, and note that
$Spin\otimes Spin$ is a fine $K-$type. 
\end{proof}

\begin{proposition}
The multiplicity of $\C O$ in $X(\C O,\psi_gen)$ in Equation
(\ref{eq:genuine}) is $2^p.$
Let $\psi_{gen}$ be the unique irreducible representation of dimension $2^p$ of $A_{Spin}(\C O).$
Then  
$$
X(\C O,\psi_gen)\mid_{K}=R(\CO,\psi_{gen})
$$ 
\end{proposition}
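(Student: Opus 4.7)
The plan is to combine the uniqueness of the genuine Langlands parameter from Proposition \ref{p:genuine}, a bottom-layer/induction argument modeled on the proof of Theorem \ref{t:5.10} and the Corollary handling the core case $(2m+1,2m)$, and Vogan's associated cycle machinery from Section \ref{S:10}.

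First I would record the Vogan-theoretic reduction. By the Corollary of Theorem \ref{t:av}, the restriction $X(\C O,\psi_{gen})|_K$ has the form $\Ind_H^K \sigma - D$, where $H$ is the isotropy subgroup at $e\in\C O$ in the spin cover, $\sigma$ is an irreducible algebraic representation factoring through $A_{Spin}(\C O)$, and $D$ is a virtual $K$-module supported on orbits strictly smaller than $\C O$. The multiplicity of $\C O$ in $X(\C O,\psi_{gen})$ equals $\dim\sigma$, so the proposition reduces to identifying $\sigma=\psi_{gen}$ and proving $D=0$.

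To identify $\sigma$ and compute $D$, I would realize $\C O$ as an induced orbit. Choose a $\theta$-stable parabolic $\fk p=\fk m+\fk n$ with Levi $\fk m = gl(n_1)\times\cdots\times gl(n_k)\times \fk g_0$ and seed orbit $\C O_M = \{0\}\times\cdots\times\{0\}\times\C O_0$, where $\C O_0\subset\fk g_0$ is a core orbit of type $(2m_0+1,2m_0)$ covered by the Corollary just before Proposition \ref{t:spinunitary}. A direct partition manipulation arranges $\Ind_\fk m^\fk g(\C O_M)=\C O$ and makes the map $A_{Spin,M}(e_M)\to A_{Spin,G}(e)$ compatible with the genuine characters. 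Lifting to spin covers, the seed module is the exterior tensor of spherical characters on the $GL$ factors with $X(\C O_0,\psi_{gen}^{(0)})$; by Helgason's theorem on the $GL$ factors and the Corollary cited above for $G_0$, its restriction to $K_M$ equals $R(\C O_M,\psi_M)$ for the corresponding character $\psi_M$. Inducing and invoking Propositions \ref{p:5.5} and \ref{p:induced} yields
\[
X(\C O,\psi_{gen})|_K \;=\; R(\C O,\psi_{gen}) \;-\; D ,
\]
with $\sigma=\psi_{gen}$ forced by formula (\ref{eq:accx}) and a component-group calculation: the base multiplicity $2$ from the Corollary, times the factor $|C_G(v)/C_P(v)|=2^{p-1}$ coming from the $GL$ blocks, gives $2^p$. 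By Proposition \ref{p:cgroup} the unique irreducible of $A_{Spin}(\C O)$ with that dimension is $\psi_{gen}$.

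Finally, to force $D=0$, I would embed $\fk g$ into a larger algebra $\fk g^+$ via a parabolic whose Levi is $\fk g\times gl(k_1)\times\cdots\times gl(k_r)$ so that the induced orbit $\C O^+$ is triangular, exactly as in the proof of Theorem \ref{t:5.10}. The triangular case admits the clean character formula from \cite{BV2} and Proposition \ref{p:induced}; inducing the displayed identity and matching the two sides orbit-by-orbit using Proposition \ref{p:5.5} forces $\Ind D = 0$ and hence $D=0$. The main obstacle is the component-group bookkeeping on the spin cover: the non-split extension $1\to\bb Z_2\to A_{Spin}(\C O)\to A(\C O)\to 1$ of Proposition \ref{p:cgroup} means one must verify that the restriction of $\psi_{gen}$ along $A_{Spin,P}(e)\to A_{Spin,G}(e)$ does not fragment into smaller pieces, and one needs a genuine analogue of the Kostant/Helgason computation of $R(\C O,\psi_{gen})$ that parallels the calculation in the Proposition immediately preceding the target statement.
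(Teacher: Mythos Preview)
Your overall architecture---Vogan's associated cycle reduction, then an induction argument patterned on Section~\ref{S:cg4}---matches the paper's. But two steps do not go through as written.

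\medskip
\textbf{The ``triangular'' step is the wrong target.} You propose to embed $\fk g$ in $\fk g^+$ so that $\CO^+$ becomes triangular in the classical sense and then invoke the \cite{BV2} Section~9 machinery used in Theorem~\ref{t:5.10}. That machinery concerns special unipotent representations with $\la_\CO=h^\vee/2$; it does not apply to the genuine parameters of~(\ref{eq:genuine}). The paper's replacement for triangular is the family of orbits with rows $(1,3,5,\dots,4k+1)$ (no even rows). For these, the paper uses two parabolics: one with Levi a product of $GL$'s, from which the genuine induced module is \emph{irreducible} and equal to $X(\CO,\psi_{gen})$, yielding the multiplicity $2^p$ via~(\ref{eq:accx}); the other replacing the trivial seed by the core case $(2m+1,2m)$ already handled in the preceding Corollary, which provides the $R(\CO,\psi)$ side. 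Irreducibility of both induced modules (rather than the ``sum over $X_\chi$'' identity from \cite{BV2}) is what drives the argument. Your step~(1), inducing from $GL$'s times a core $(2m_0+1,2m_0)$, is essentially the second parabolic; but your step~(2) must be reworked to land in the $(1,3,\dots,4k+1)$ family, not classical triangular.

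\medskip
\textbf{The identification $\sigma=\psi_{gen}$ needs genuineness, not just dimension.} Knowing $\dim\sigma=2^p$ is not enough: a priori $\sigma$ could be reducible, say a sum of $2^p$ nongenuine characters of $A_{Spin}(\CO)$. The missing observation is that $X(\CO,\psi_{gen})$ is genuine, so every $K$-type has the central $\bZ_2$ of the spin cover acting nontrivially, and hence every constituent of $\sigma$ is a genuine representation of $A_{Spin}(\CO)$. By Proposition~\ref{p:cgroup} the smallest (in fact the only) genuine irreducible has dimension $2^p$, so $\sigma=\psi_{gen}$ on the nose. Relatedly, your multiplicity count ``base $2$ times $2^{p-1}$'' is off: the core orbit $(2m+1,2m)$ has $A_{Spin}(\CO_0)=\bZ_2$ with $\psi_{gen}^{(0)}$ one-dimensional, so the base multiplicity there is $1$, not $2$; the full $2^p$ comes from the $GL$-type parabolic in the paper's setup.
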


\begin{proof}
The proof is essentially the same as for the cases in Section
\ref{S:cg4}. The triangular orbits are replaced by the orbits with
rows $(1,3,\dots ,4k+1).$ The 
induced modules from the two parabolic subalgebras are both
irreducible. The induced from the parabolic subalgebra with Levi
component products of $GL$ gives multiplicity $2^p.$ For the induced
from the other parabolic subalgebra, the trivial representation is
replaced by the representation with orbit $\C O $ corresponding to the
columns $(2m+1,2m).$ Since $X(\C O,gen)$ is genuine, and $2^p$ is the
smallest possible for a genuine representation of $C_{Spin}(\C O)$
(this representation is trivial on the connected component),  
the proof from Section \ref{S:cg4} carries over. We omit further details. 
\end{proof}


\begin{thebibliography}{20}
{
\bibitem[AB1]{AB1}
J.~Adams, D.~Barbasch, 
{\em The Reductive Dual Pairs Correspondence for Complex Groups}
J. of Func. An. vol 132, 1995, pp. 1-42.


\bibitem[AB2]{AB2}
J.~Adams, D.~Barbasch, 
{\em Genuine Representations of the Metaplectic Group},
Comp. Math., 1998, vol 113, issue 1, pp. 23-66.

\bibitem[B1]{B1}
D.~Barbasch,
\emph{The unitary dual for  classical complex groups}, 
Invent. Math., vol. 96, 1989, pp. 103-176.

\bibitem[B2]{B2}
\bysame
{\em Unipotent representations for real reductive groups}
Proceedings of ICM 1990, Springer Verlag, Tokyo, 1991, pp. 769-777.

\bibitem[B3]{B3}
\bysame
{\em Orbital integrals of nilpotent orbits}
volume in honor of Harish-Chandra, Proc. Symp. Pure Math. 68 (2000)
pp. 97-110.

\bibitem[B4]{B4}
\bysame
{\em The  unitary spherical spectrum for split classical groups}
Journal of Inst. of Math. Jussieu, 2010 9(2) pp 265-356.


\bibitem[BV]{BV}
N.~Berline, M.~Vergne
{\em Fourier transforms of orbits of the coadjoint representation}
Representation theory of reductive groups, Birkh\"auser-Boston,
Progress in mathematics vol 40, 1983, pp. 53-69.

\bibitem[BV1]{BV1}
D.~Barbasch, D.~Vogan
{\em The local structure of characters} 
J. Funct. Analysis, vol. 34 no. 1, 1980, pp. 27-55.

\bibitem[BV2]{BV2}
D.~Barbasch, D.~Vogan,
{\em Unipotent representations of complex semisimple Lie groups} 
Ann. of Math., 1985, vol 121, pp. 41-110.

\bibitem[Br]{Br}
A.~Brega, {\em On the unitary dual of $Spin(2n,\bC$)}, TAMS, vol. 351,
no. 1, 1999, pp. 403-415.


\bibitem[Bry]{Bry}
R.~Brylinski, {\em Dixmier algerbas for classical complex nilpotent
  orbits via Kraft-Procesi models I}
The orbit method in geometry and physics, volume in honor of
A. A. Kirillov, C. Duval, L. Guieu, V. Ovsienko, Progress in
Mathematics, Birkh\"user, vol. 213, 2003, pp. 49-67.
}
{
\bibitem[G]{G}
W.~Graham, \emph{Functions on the universal cover of the principal
nilpotent orbit}, Inv. Math., vol. 108, 1992, pp. 15-27.

\bibitem[H1]{H1}
R.~Howe, {\em Remarks on classical invariant theory}, TAMS, vol. 313,
no. 2, 1989, pp. 539–570. 

\bibitem[H2]{H2}
R.~Howe, {\em Transcending Classical Invariant Theory}
J. Amer. Math. Soc., vol. 2, 1989, pp. 535-552. 

\bibitem[KP1]{KP1}
H-P.~Kraft, C.~Procesi,
{\em On the geometry of conjugacy classes in classical groups}
Comm. Math. Helv., vol. 57, 1982, pp. 539-601.

\bibitem[J]{J}
J. C.~Jantzen
{\em Nilpotent orbits in representation theory} in volume Lie Theory,
Lie Theory, Lie algebras and Representations, J.-P~Anker, B.~Orsted, Progress
in Mathematics, Birkh\"auser, vol. 338, 2004, pp. 1-213.

\bibitem[L]{L}
G.~Lusztig, 
{\em Characters of reductive groups over a finite field}
Annals of Mathematics Studies, Princeton University Press, 1984, no. 107.

\bibitem[LS]{LS}
G.~Lusztig, N.~Spaltenstein
{\em Induced unipotent classes}
J. London Math. Soc., vol. 19, 1979, pp. 41-52.


\bibitem[McG]{McG}
M.~McGovern
{\em  Rings of regular functions on nilpotent orbits and their covers}
Invent. Math., vol. 97, 1989, pp. 209-217.

\bibitem[McG1]{McG1}
\bysame {\em Rings of regular functions on nilpotent orbits II: model
algebras and orbits}
Comm. in Algebra, vol. 22, no. 3, 1994, pp. 241-276.

\bibitem[Moe]{Moe} 
C.~Moeglin {\em Paquets d'Arthur sp\'eciaux unipotents aux places
  archimedi\'ennes et correspondence de Howe}, preprint, to appear in
the same volume.

\bibitem[MR]{MR}
C.~Moeglin, D. Renard {\em Paquets d'Arthur des groupes classiques sur le
  corps des complexes}, preprint, arXiv:1604.07328.


\bibitem[SV1]{SV1}
W.~Schmid, K.~Vilonen
{\em Two geometric character formulas for reductive Lie groups }
J. Amer. Math. Soc., vol. 11, no. 4, 1998, pp. 765-772.

\bibitem[SV2]{SV2}
W.~Schmid, K.~Vilonen
Characteristic cycles and wave front cycles of representations of
reductive groups, W. Schmid and K. Vilonen, Annals of Math., vol.  151,
2000, pp. 1071 - 1118. 



\bibitem[T]{T}
W.-T.~Tsai {\em Some genuine small representations of a nonlinear
  double cover}, Ph.D. thesis, U. of Maryland, 2014, 	arXiv:1412.4274.

\bibitem[V]{V} 
D.~Vogan, {\em Associated varieties and unipotent representations}
Harmonic Analysis on reductive groups, Progress in Mathematics
vol. 101, Birkh\"auser, Boston-Basel-Berlin, 1991, pp. 315-388.

\bibitem[V1]{V1} 
D.~Vogan, {\em The orbit method and primitive ideals for semisimple
  Lie Algebras}

\bibitem[V2]{V2}
\bysame
{\em The method of coadjoint orbits for real reductive groups}
Representation Theory of Lie Groups, IAS/Park City Mathematics Series
vol. 8, 1999, pp. 179-238.

\bibitem[W]{W}
K.-Y.~Wong, {\em Dixmier algebras on complex classical nilpotent orbits and their representation theories}, Ph.D. thesis, 2013, Cornell University
}
\end{thebibliography}
\end{document}